\newcommand{\diam}{\text{diam}}
\newcommand{\SL}{\mathrm{SL}}
\newcommand{\sspan}{\mathrm{span}}
\newcommand{\M}{\mathrm{M}}
\newcommand{\X}{\mathcal{X}}
\newcommand{\R}{\mathbb{R}}
\newcommand{\e}{\epsilon}
\newcommand{\Z}{\mathbb{Z}}
\newcommand{\N}{\mathbb{N}}
\newcommand{\an}{{\alpha_{\mathbf{\eta}}} }
\newcommand{\bfe}{\mathbf{e}}
\newcommand{\Mat}{\M_{m \times n}(\R)}
\newcommand{\Prob}{\text{Prob}}
\newcommand{\supp}{\text{supp}}
\newcommand{\Sing}{\text{Sing}}
\newcommand{\mc}{\mathcal}
\newcommand{\Fcal}{\mc{F}}
\newcommand{\Kcal}{\mc{K}}
\newcommand{\Pcal}{\mc{P}}
\newcommand{\Hcal}{\mc{H}}
\newcommand{\te}{{\tilde{e}}}
\newcommand{\mur}{{\mu^{(r)}}}
\newcommand{\tPcal}{\widetilde{\Pcal}}
\newcommand{\bfn}{{\hat{\eta}}}
\newcommand{\EMassl}{\underline{\text{EMass}}}
\newcommand{\EMassu}{\overline{\text{EMass}}}
\newcommand{\Div}{\text{Div}}
\newcommand{\Diver}{\text{Divergent}}
\newcommand{\Divergent}{\widetilde{\text{Div}}}
\newcommand{\oEM}{\overline{\text{EM}}}
\newcommand{\uEM}{\underline{\text{EM}}}
\title{Dimension bounds for singular affine forms}
\begin{document}
\theoremstyle{plain}
\newtheorem{thm}{Theorem}[section]
\newtheorem{lem}[thm]{Lemma}
\newtheorem{prop}[thm]{Proposition}
\newtheorem{cor}[thm]{Corollary}
\newtheorem{question}{Question}
\newtheorem{con}{Conjecture}
\theoremstyle{definition}
\newtheorem{defn}[thm]{Definition}
\newtheorem{exm}[thm]{Example}
\newtheorem{nexm}[thm]{Non Example}
\newtheorem{prob}[thm]{Problem}

\theoremstyle{remark}
\newtheorem{rem}[thm]{Remark}

\author{Gaurav Aggarwal}
\address{\textbf{Gaurav Aggarwal} \\
School of Mathematics,
Tata Institute of Fundamental Research, Mumbai, India 400005}
\email{gaurav@math.tifr.res.in}

\date{}

\thanks{ G. Aggarwal gratefully acknowledge a grant from the Department of Atomic Energy, Government of India, under project $12-R\&D-TFR-5.01-0500$. }

\subjclass[2020]{11J13, 11J83, 37A17}
\keywords{Diophantine approximation, ergodic theory, Hausdorff dimension, flows on homogeneous spaces}


\begin{abstract}  
In this paper, we establish upper bounds on the dimension of sets of singular-on-average and \(\omega\)-singular affine forms in singly metric settings, where either the matrix or the shift is fixed. These results partially address open questions posed by Das, Fishman, Simmons, and Urbański, as well as Kleinbock and Wadleigh. Furthermore, we extend our results to the generalized weighted setup and derive bounds for the intersection of these sets with a wide class of fractals. 
\end{abstract}

\maketitle

\tableofcontents

\section{Introduction}

Fix \( m, n \in \N \) and vectors \( a = (a_1, \ldots, a_m) \in \R^m \) and \( b = (b_1, \ldots, b_n) \in \R^n \) such that  
\[
a_1 \ge a_2 \ge \cdots \ge a_m > 0, \qquad 
b_1 \ge b_2 \ge \cdots \ge b_n > 0,
\]
and  
\[
a_1 + \cdots + a_m = 1, \qquad b_1 + \cdots + b_n = 1.
\]
Let \( d = m + n \).  

We define a \emph{quasi-norm} \( \|\cdot\|_a \) on \( \R^m \) by  
\[
\|x\|_a = \max_i |x_i|^{1/a_i}, \qquad x = (x_1, \ldots, x_m) \in \R^m.
\]
Similarly, define a quasi-norm \( \|\cdot\|_b \) on \( \R^n \) by  
\[
\|y\|_b = \max_j |y_j|^{1/b_j}, \qquad y = (y_1, \ldots, y_n) \in \R^n.
\]

For \( \theta \in \Mat \) and \( \xi \in \R^m \), the \emph{inhomogeneous uniform \((a,b)\)-exponent} of \((\theta, \xi)\), denoted by \( \hat{\omega}(\theta, \xi, a, b) \), is defined as the supremum of all real numbers \( \omega \) such that the inequalities  
\begin{align*}
 \|p+ \theta q + \xi\|_a &\le \frac{1}{T^{1+\omega}}, \\
 \|q\|_b &\le T,
\end{align*}
admit an integer solution \((p, q) \in \Z^m \times (\Z^n \setminus \{0\})\) for all sufficiently large \( T \).  

When \( \xi = 0 \), we simply write \( \hat{\omega}(\theta, a, b) := \hat{\omega}(\theta, 0, a, b) \).  
Finally, define  
\[
\Sing(a, b, \omega) := \{\, (\theta, \xi) \in \Mat \times \R^m : \hat{\omega}(\theta, \xi, a, b) \ge \omega \,\}.
\]

\begin{rem}
The concept of Diophantine exponents was originally introduced by Khintchine~\cite{Khintchine} and Jarn{\'{i}}k~\cite{Jarnik}. For further details, see also~\cites{BugLau,CGGMS,German}.
\end{rem}

\begin{rem}
We note that the usual irrationality exponent, as defined in~\cite{BugLau}, is given by 
\(\frac{n}{m}(1 + \hat{\omega}(\theta, a, b))\), 
whereas in~\cite{CGGMS} it is defined as 
\(1 + \hat{\omega}(\theta, a, b)\).
\end{rem}

We define the set of \emph{$(a,b)$-singular affine forms}, denoted by \(\Sing(a,b)\), as the set of all 
\((\theta, \xi) \in \Mat \times \R^m\) 
such that for every \(\e > 0\), there exists \(T_{\e} > 0\) with the property that for all \(T > T_{\e}\), one can find 
\((p, q) \in \Z^m \times (\Z^n \setminus \{0\})\) satisfying
\begin{align*}
    \|p + \theta q + \xi\|_{a} &\leq \frac{\e}{T}, \\
    \|q\|_{b} &\leq T.
\end{align*}

\begin{rem}
It is straightforward to verify that for all \(\omega > 0\),
\[
\Sing(a,b,\omega) \subset \Sing(a,b).
\]
\end{rem}

\vspace{0.3in}

The study of inhomogeneous Diophantine approximation has a rich history; see, for example, \cite{aggarwalghosh2024inhom} for a historical review. In recent years, significant progress has been made, particularly in the area of \emph{uniform} inhomogeneous Diophantine approximation. Kleinbock and Wadleigh \cite{KleinbockWadleigh} proved an inhomogeneous \(\psi\)-Dirichlet theorem and, in Section 7 of their paper, posed questions regarding the zero-one law and the Hausdorff dimension of \(\psi\)-Dirichlet improvable systems of affine forms in singly metric cases, i.e., when either \(\theta\) or \(\xi\) is fixed. Kim and Kim \cite{KimKim} partially addressed these questions by proving a zero-one law for the \(s\)-dimensional Hausdorff measure of the \emph{complement} of the set of \(\psi\)-Dirichlet improvable systems of affine forms, both in the doubly metric case and in the singly metric case for fixed \(\xi \in \mathbb{R}^m\). See also \cite{BakhtawarSimmons} for further results. 

In joint work with A. Ghosh \cite{aggarwalghosh2024inhom}, the authors partially answered the measure-theoretic question of Kleinbock and Wadleigh for a wide class of measures, including natural measures on self-similar fractals and manifolds, corresponding to \(\psi(t) = t^{-\omega}\). For \(m=n=1\), the dimension question was addressed by Kim and Liao \cite{LiaoKim}, in the case where the real number \(\theta\) is fixed. However, for \((m,n) \neq (1,1)\), the problem of dimension estimates for \(\psi\)-Dirichlet improvable systems of affine forms, even for \(\psi(t) = t^{-\omega}\) with \(\omega > 1\) in the singly metric case, remains open.

The interest in dimension estimates in inhomogeneous Diophantine approximation for singly metric cases was also raised by Das, Fishman, Simmons, and Urbański. In \S 5.8 of \cite{DFSU}, they noted: ``It would be of interest to investigate analogues of our results in the frameworks of inhomogeneous approximation,'' and further, ``It is also natural to study the inhomogeneous approximation frameworks where we fix one coordinate of the pair \((\theta, \xi)\) and let the other vary.'' 

In this paper, we address these questions by providing upper bounds on the dimensions of singular-on-average and \(\omega\)-singular affine forms in both singly metric cases. More precisely, for a fixed \(\xi \in \R^m\), we define
\begin{align*}
    \Sing^\xi(a,b) &= \{\theta \in \Mat : (\theta, \xi) \in \Sing(a,b)\}, \\
    \Sing^\xi(a,b, \omega) &= \{\theta \in \Mat : (\theta, \xi) \in \Sing(a,b,\omega)\},
\end{align*}
and for a fixed \(\theta \in \Mat\), we define
\begin{align*}
    \Sing_\theta(a,b) &= \{\xi \in \R^m : (\theta, \xi) \in \Sing(a,b)\}, \\
    \Sing_\theta(a,b, \omega) &= \{\xi \in \R^m : (\theta, \xi) \in \Sing(a,b,\omega)\}.
\end{align*}
Let \(\dim_H(\cdot)\) and \(\dim_P(\cdot)\) denote the Hausdorff and packing dimensions, respectively. 

\medskip

The following statements follow from the main results of this paper.

\begin{cor}
\label{cor intro 1}
For Lebesgue almost every \(\theta \in \Mat\), we have
\[
\dim_H(\Sing_\theta(a,b)) = \dim_P(\Sing_\theta(a,b)) = 0.
\]
\end{cor}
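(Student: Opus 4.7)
The plan is to deduce Corollary \ref{cor intro 1} from the paper's main singly metric dimension theorem together with the standard inclusion of singular affine forms inside the larger class of singular-on-average affine forms, plus a classical almost-everywhere Diophantine property of the matrix $\theta$.

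First I would unpack the definitions to obtain the inclusion $\Sing_\theta(a,b) \subseteq \Sing_\theta^{\mathrm{avg}}(a,b)$, where the right-hand side denotes the singular-on-average slice at $\theta$. Indeed, if $(\theta,\xi) \in \Sing(a,b)$ then for every $\epsilon > 0$ the Dirichlet-type inequality $\|p + \theta q + \xi\|_a \le \epsilon/T$, $\|q\|_b \le T$ admits an integer solution for \emph{all} sufficiently large $T$, so the set of exceptional $T$ is finite and in particular has zero (logarithmic) density, which is precisely the singular-on-average condition. Consequently it suffices to bound $\dim_H$ and $\dim_P$ of $\Sing_\theta^{\mathrm{avg}}(a,b)$ for a.e.\ $\theta$.

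Next I would invoke the main singly metric theorem of this paper, which gives upper bounds for $\dim_H(\Sing_\theta^{\mathrm{avg}}(a,b))$ and $\dim_P(\Sing_\theta^{\mathrm{avg}}(a,b))$ in terms of a weighted Diophantine invariant of $\theta$ (e.g.\ the uniform exponent $\hat{\omega}(\theta,a,b)$, or equivalently a growth rate of the weighted Cartan trajectory $g_t u_\theta \Z^d$). The qualitative input required is that this invariant vanishes for Lebesgue almost every $\theta$: by the weighted Khintchine-Groshev theorem, a.e.\ $\theta$ is not very well approximable in the $(a,b)$-sense, which forces $\hat{\omega}(\theta,a,b) = 0$ and hence makes the main upper bound equal to $0$. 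Combined with the inclusion of the previous step, this yields $\dim_H(\Sing_\theta(a,b)) = \dim_P(\Sing_\theta(a,b)) = 0$ for a.e.\ $\theta$.

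The main obstacle is not this corollary, which is essentially formal once the main theorem is in hand, but the underlying singly metric dimension bound. The crux is an escape-of-mass / nondivergence analysis for a $\xi$-parametrized family of orbits on the affine homogeneous space $\mathrm{ASL}_d(\R)/\mathrm{ASL}_d(\Z)$ under the weight-$(a,b)$ Cartan flow, with $\theta$ held fixed. Since one cannot integrate over $\theta$, one must control a single one-parameter family of unipotent-twisted diagonal translates quantitatively, via entropy and effective equidistribution techniques in the spirit of Einsiedler-Kadyrov and Kadyrov-Kleinbock-Lindenstrauss-Margulis. This is precisely why a Diophantine hypothesis on $\theta$ is needed in the main theorem, and why the singly metric case is substantially more delicate than its doubly metric counterpart.
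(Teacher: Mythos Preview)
Your overall architecture is right and matches the paper: reduce to the singular-on-average slice via the inclusion $\Sing_\theta(a,b)\subset\Div_\theta(a,b,1)$, apply the main singly metric bound (Theorem~\ref{main thm 2}/Corollary~\ref{main thm 2'}), and then feed in an almost-everywhere property of $\theta$.

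However, two of your identifications are off in ways that create a real gap. First, the invariant appearing in the paper's bound is the upper escape of mass $\EMassu(\theta)$ of the orbit $(g_t u(\theta)\Z^d)_{t\ge 1}$, not the uniform exponent $\hat\omega(\theta,a,b)$ or a ``growth rate''. These are genuinely different: $\hat\omega(\theta,a,b)=0$ only says the orbit does not diverge, while $\EMassu(\theta)=0$ says every weak-$*$ limit of the time averages is a probability measure on $\X$; an orbit can be non-divergent yet still lose a positive proportion of mass. Second, and more seriously, your proposed input ``a.e.\ $\theta$ is not very well approximable, by Khintchine--Groshev'' controls the \emph{ordinary} exponent, not $\hat\omega$, and in any case neither of these forces $\EMassu(\theta)=0$. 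The paper instead invokes equidistribution of $(g_t u(\theta)\Z^d)$ with respect to Haar measure for Lebesgue-a.e.\ $\theta$ (see Remark~\ref{rem: wide measures}), which immediately gives $\EMassu(\theta)=0$ and hence, via Corollary~\ref{cor 1}, the conclusion.

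A side remark: your closing paragraph speculates that the fixed-$\theta$ main theorem is proved by entropy/KKLM-type methods. It is not; the paper uses an elementary iterated covering argument based on the observation that two nearby translates of a lattice with a large systole cannot both contain a very short vector unless the translates are extremely close (see the ``Key Ideas'' section and the proof of Theorem~\ref{gen thm 2}). The KKLM-style height-function machinery is used only for the fixed-$\xi$ problem.
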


\begin{cor}
\label{cor intro 2}
For any \(\xi \in \R^m\), we have
\begin{align*}
\dim_H(\Sing^\xi(a,b)) &\leq \dim_P(\Sing^\xi(a,b))
    \leq mn - \frac{1}{a_1 + b_1} \min\{m a_m, n b_n\}, \\
     \dim_H(\Sing^\xi(a, b, \omega) ) & \leq \dim_P(\Sing^\xi(a, b, \omega) ) \leq mn - \frac{1}{a_1 + b_1} \left( \min\{ma_m, nb_n\} + \frac{m a_m b_n \omega}{a_m + b_n + a_m \omega} \right).
\end{align*}
In particular, when \(a_1 = \cdots = a_m = 1/m\) and \(b_1 = \cdots = b_n = 1/n\), it follows that
\begin{align*}
    \dim_H(\Sing^\xi(a,b)) &\leq \dim_P(\Sing^\xi(a,b))
    \leq mn - \frac{mn}{m+n}, \\
     \dim_H(\Sing^\xi(a, b, \omega) ) & \leq \dim_P(\Sing^\xi(a, b, \omega) ) \leq mn - \frac{mn}{m+n} \left( 1 + \frac{ m \omega}{m + n + n \omega} \right).
\end{align*}
\end{cor}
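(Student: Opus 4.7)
My approach is to reduce, via the inhomogeneous Dani correspondence, to an orbit-divergence problem on the space of affine unimodular lattices, and then apply a KKLM-style covering estimate for the unipotent slice. Let $a_t := \mathrm{diag}(e^{a_1 t}, \ldots, e^{a_m t}, e^{-b_1 t}, \ldots, e^{-b_n t})$ and let $u_\theta \in \mathrm{SL}_d(\R)$ denote the unipotent element with $\theta$ in the upper-right $m \times n$ block and identities on the diagonal blocks. The inhomogeneous Dani correspondence identifies $(\theta, \xi) \in \Sing(a,b)$ with the divergence of the forward $a_t$-orbit of the affine lattice $\Lambda_{\theta, \xi} := u_\theta \Z^d + (\xi, 0)$ in $Y_d := \mathrm{ASL}_d(\R)/\mathrm{ASL}_d(\Z)$, and identifies $(\theta, \xi) \in \Sing(a,b,\omega)$ with a quantified divergence rate in the cusp (the correspondence falls out of computing the quasi-norms of $a_t$-images of vectors $(p+\theta q+\xi, q)$). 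Fixing $\xi$ thus reduces the corollary to bounding $\dim_H$ and $\dim_P$ of the set of $\theta \in \Mat$ for which $a_t \cdot \Lambda_{\theta, \xi}$ diverges (resp.\ $\omega$-diverges).

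The main estimate is a KKLM-style covering lemma for the unipotent slice $\{u_\theta : \theta \in \Mat\}$ acting on the fixed affine lattice $y_\xi$ determined by $\xi$. At each time $t$, the set of bad $\theta$'s (those whose orbit $a_t u_\theta y_\xi$ has exited a fixed compact subset of $Y_d$) is covered by balls of radius $e^{-(a_1+b_1)t}$ in $\Mat$, with the number of bad balls bounded by $e^{(mn - \alpha)(a_1+b_1)t}$, where $\alpha = (a_1+b_1)^{-1} \min\{m a_m, n b_n\}$. Summing in the Hausdorff and packing senses (the covering bound directly controls both dimensions) yields the asserted bound $mn - \alpha$. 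The factor $(a_1+b_1)^{-1}$ is the reciprocal of the top Lyapunov weight of $a_t$ on $\mathrm{Lie}(U)$, setting the scale on which conjugation $a_t u_\theta a_t^{-1}$ becomes non-trivial; the term $\min\{m a_m, n b_n\}$ is the slowest total contracting weight across the $\R^m$- or $\R^n$-block, controlling the rate at which short vectors of $\Lambda_{\theta, \xi}$ can force cusp excursions. For the $\omega$-refinement, requiring a faster exit rate dictated by $\omega$ tightens the covering bound by a Lagrange-multiplier type balancing of the divergence rate against the flow's weights, producing the additional correction $\frac{m a_m b_n \omega}{a_m + b_n + a_m \omega}$. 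The equal-weight case follows by direct substitution: $a_1 + b_1 = (m+n)/(mn)$ and $\min\{m a_m, n b_n\} = 1$.

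\textbf{Main obstacle.} The principal technical content is establishing the covering lemma uniformly in the fixed shift $\xi \in \R^m$. The classical KKLM argument exploits the arithmetic symmetry of $\Z^d$ via a Margulis height function adapted to short integer vectors; once $\xi$ is fixed and generally nonzero, this symmetry is broken and one must simultaneously control integer vectors $(p, q) \in \Z^d$ and their $\xi$-shifted analogues $(p + \xi, q)$. I would address this by constructing an affine-lattice Margulis height function whose sublevel sets detect short vectors of both types, verifying its average contraction under $a_t$, and then running a counting/pigeonhole argument on $\theta$-balls that exploits the unipotent commutator structure $a_t u_\theta a_t^{-1} u_\theta^{-1}$. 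The appearance of $\min\{m a_m, n b_n\}$, rather than a larger quantity, is precisely the price of the affine setting: in tracking both kinds of short vectors one cannot privilege either block of coordinates, so the worse of the two blockwise contracting rates governs the final exponent.
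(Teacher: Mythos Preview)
Your broad framework---Dani correspondence followed by a height-function/contraction argument in the style of KKLM---is exactly the route the paper takes. However, there is a genuine conceptual gap in your proposal that would derail the argument as written.

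You describe $(\theta,\xi)\in\Sing(a,b)$ as corresponding to \emph{divergence} of the $a_t$-orbit of $\Lambda_{\theta,\xi}$ in $Y_d=\widetilde{\X}$, and you speak of ``bad $\theta$'s whose orbit has exited a fixed compact subset of $Y_d$.'' This is not what happens. The correspondence (Lemma~\ref{lem: Sing Dynamical Interpretation}) shows that $(\theta,\xi)\in\Sing(a,b)$ forces the shortest vector of the affine lattice $g_t[u(\theta),v(\xi)]\widetilde{\Gamma}$ to tend to $0$; equivalently, the orbit approaches the closed $G$-invariant subspace $\X\subset\widetilde{\X}$ of \emph{homogeneous} lattices. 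Such an orbit may well stay in a fixed compact subset of $\widetilde{\X}$, or even converge to a point of $\X$. The paper stresses this distinction explicitly (Remark~\ref{rem:interpret singular} and the ``Key Ideas'' section): the problem is not escape of mass in $\widetilde{\X}$, but accumulation near the smaller homogeneous space $\X$.

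This matters for the height function. A classical Margulis/KKLM function on $\widetilde{\X}$ detects the cusp, i.e.\ short vectors in the \emph{underlying} lattice $\pi(\widetilde{\Lambda})$; that is not the relevant event here. The paper instead builds (Section~\ref{sec: Height Function}) a function $f_{\e,\bfn}$ that combines the affine piece $\psi(\widetilde{\Lambda})=\max_{v\in\widetilde{\Lambda}}\|v\|^{-1}$, which blows up exactly on $\X$, with auxiliary Margulis-type terms $\tilde{\varphi}_l^{\eta_l}$ on the projection $\pi(\widetilde{\Lambda})$. The auxiliary terms are needed because the one-step contraction of $\psi$ alone (Proposition~\ref{prop: Contraction psi}) produces an error $\tilde{\varphi}_1^{\eta_1}$ that must itself be controlled; this forces the whole tower of $\tilde{\varphi}_l$'s into the construction. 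Your proposed ``affine-lattice Margulis height function whose sublevel sets detect short vectors of both types'' is in the right spirit, but the actual coupling between the affine and homogeneous pieces is the crux and is not addressed in your sketch.

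Finally, your heuristic for $\min\{ma_m,nb_n\}$ (``the slowest total contracting weight across the $\R^m$- or $\R^n$-block'') is not how the constant arises. In the paper it is $\min_{1\le l\le d-1}\eta_l w_l$, where $w_l$ is the smallest $g_t$-weight on the highest-weight subspace $V_l^+\subset\bigwedge^l\R^d$ and $\eta_l$ is a critical integrability exponent for $\|\pi_{l+}(u(\theta)v)\|^{-\eta_l}$ against $\mu$ (Proposition~\ref{Critical Exponent is positive}). For Lebesgue measure one may take $\eta_l=m/l$ for $l\le m$ and $\eta_l=n/(d-l)$ for $l\ge m$, and a direct computation then gives $\min_l\eta_l w_l=\min\{ma_m,nb_n\}$. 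The $\omega$-correction likewise comes from the second clause of Theorem~\ref{thm: contraction implies dimesnion bound} combined with the quantitative Dani bound of Lemma~\ref{lem: omega sing dynamical Interpretation}, not from a Lagrange-multiplier balancing.
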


\begin{rem}
The non-emptiness of \(\Sing(a,b)\) in the equal-weight case follows from the classical theorem of Khintchine~\cite{Khintchineinhom}; see also~\cite[\S3.3]{MoshchevitinNeckrasov}.  
For lower bounds on \(\Sing^\xi(a,b)\), we refer to Schleischitz~\cite{schleischitz2022}, who established \emph{lower bounds} for the packing dimension of singular vectors lying on certain classes of fractals in the equal-weight case with \(n = 1\).  
For \(n > 1\), the uncountability and density of \(\Sing^\xi(a,b)\) follow from~\cite[Thm.~8.1]{KMWW24}, where Kleinbock, Moshchevitin, Warren, and Weiss announced this result; the complete proof is expected to appear in forthcoming joint work with Hong and Neckrasov.  
For results concerning \(\Sing_\theta(a,b)\), see the paper of Moshchevitin and Neckrasov~\cite{MoshchevitinNeckrasov}.
\end{rem}

\begin{rem}
    For \(\xi = 0\), the problem of computing dimesnion of $\Sing^\xi(a,b)$ has been extensively studied, particularly in the equal weight case, \(a = (1/m, \ldots, 1/m)\) and $b= (1/n, \ldots, 1/n)$. A landmark result in this direction was obtained by Y. Cheung \cite{Cheung}, who showed that the Hausdorff dimension of \(\Sing^0((1/2, 1/2), 1) \subset \mathbb{R}^2\) is \(4/3\). This result was subsequently generalized to \(\mathbb{R}^m\) by Cheung and Chevallier \cite{CheungChevallier}. A sharp upper bound for the broader set of \emph{singular on average} \(m \times n\) matrices was later established by Kadyrov, Kleinbock, Lindenstrauss, and Margulis in \cite{KKLM}, using techniques from homogeneous dynamics. The complementary lower bound was proven by Das, Fishman, Simmons, and Urbański \cite{DFSU}, employing methods from the parametric geometry of numbers. In particular, it is now known that
    $$
    \Sing^0\left(\left(\frac{1}{m}, \ldots, \frac{1}{m}\right), \left( \frac{1}{n}, \ldots, \frac{1}{n} \right) \right) = mn - \frac{mn}{m+n}.
    $$
    For additional results in this direction, see also \cite{Solan}. 
    
    In \cite{Khalilsing}, Khalil provided an upper bound on the Hausdorff dimension of singular vectors lying on self-similar fractals in \(\mathbb{R}^m\) that satisfy the open set condition. Shah and Yang \cite{ShahYang} obtained dimension bounds for certain singular vectors lying on affine subspaces. However, in the unequal weight setting, the literature remains sparse. For \((m, n) = (2, 1)\), the Hausdorff dimension of \((a, b)\)-singular vectors was computed by Liao, Shi, Solan, and Tamam in \cite{LSST}. In \cite{KimPark2024}, Kim and Park derived a lower bound for \((a, b)\)-singular vectors in the case \(n = 1\). In joint work with A. Ghosh \cite{aggarwalghoshsingular}, the author obtained an upper bound for \((a, b)\)-singular vectors in \(\M_{m \times n}(\mathbb{R})\) and on products of self-similar fractals in \(\mathbb{R}\) satisfying the open set condition.

    For general \(\xi \neq 0\), progress has been limited. The only known result in this setting is due to Schleischitz \cite{schleischitz2022} as mentioned above.
\end{rem}

\begin{rem}
    For $\xi = 0$, the set of $\omega$-singular matrices has also been previously studied. In the unweighted setting, $\omega$-singular matrices have been investigated by Bugeaud, Cheung, and Chevallier \cite{BugeaudCheungChevallier}, Das, Fishman, Simmons, and Urbański \cite{DFSU}, and Schleischitz \cite{schleischitz2022}. In the weighted setting, to the best of the authors' knowledge, the only relevant work is the joint work of the author with A. Ghosh \cite{aggarwalghoshsingular}, which also provides a detailed historical review of the results.
\end{rem}

\begin{rem}
Using~\cite{Shi20}, it is easy to see that $\Sing^\xi(a,b,\omega)$ has full Lebesgue measure for all $\omega \leq 0$. 
In particular, $\dim_H(\Sing^\xi(a,b,\omega)) = mn$ for all $\xi$ and $\omega \leq 0$. 
For the dimension of the complement of $\Sing^\xi(a,b,\omega)$ when $\omega < 0$, see~\cite{KimKim}.
\end{rem}

\vspace{0.3in}

Given Corollaries~\ref{cor intro 1} and~\ref{cor intro 2}, and motivated by Mahler’s question in~\cite[Section~2]{Mahler} concerning Diophantine approximation on fractals (in particular, on the middle-thirds Cantor set), it is natural to ask about the behaviour of $\Sing(a,b)$ and $\Sing(a,b,\omega)$ when intersected with fractal sets. We also provide upper bounds in this setting. For brevity, we state here the results only for the middle-thirds Cantor set and refer the reader to Section~\ref{sec: Main Results} for further details.

\begin{cor}
\label{cor intro 3}
Suppose that \(a_1 = \cdots = a_m = 1/m\) and \(b_1 = \cdots = b_n = 1/n\). 
Let $\mu$ denote the $(\log 2 / \log 3)$-dimensional Hausdorff measure restricted to the middle-thirds Cantor set. 
Then, for $\mu^{m \times n}$-almost every \(\theta \in \Mat\), we have
\[
\dim_H(\Sing_\theta(a,b)) = \dim_P(\Sing_\theta(a,b)) = 0.
\]
\end{cor}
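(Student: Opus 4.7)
The plan is to deduce Corollary~\ref{cor intro 3} directly from the fractal-measure version of the main theorems stated in Section~\ref{sec: Main Results}, specialized to the natural self-similar measure on a product of middle-thirds Cantor sets. Identifying $\Mat$ with $\R^{mn}$, let $K \subset [0,1]$ denote the middle-thirds Cantor set, and let $\mu$ denote the self-similar measure on $K^{mn} \subset \R^{mn}$ associated to the product iterated function system $\{x \mapsto \tfrac{1}{3}x + t : t \in \{0, 2/3\}^{mn}\}$ with uniform weights. Up to normalization, this agrees with the $(mn\log 2/\log 3)$-dimensional Hausdorff measure on $K^{mn}$; moreover the underlying IFS satisfies the open set condition, and since a product of absolutely friendly, Federer measures is again absolutely friendly and Federer, $\mu$ falls within the scope of the fractal framework of Section~\ref{sec: Main Results}.

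Next, I would invoke the resulting upper bound on $\dim_P(\Sing_\theta(a,b))$ (hence on $\dim_H(\Sing_\theta(a,b))$) valid for $\mu$-almost every $\theta$. As in the Lebesgue case treated in Corollary~\ref{cor intro 1}, the expectation is that in the equal-weight regime $a = (1/m,\ldots,1/m)$, $b = (1/n,\ldots,1/n)$ the quantitative bound provided by the main theorem collapses to $0$ once the dimension and friendliness constants of $\mu$ are substituted, just as it does under Lebesgue measure. Combined with the non-negativity of Hausdorff and packing dimensions and the trivial inequality $\dim_H(\cdot)\leq \dim_P(\cdot)$, this forces both dimensions to vanish and yields the corollary.

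The principal obstacle is twofold. First, one must ensure that the general theorem in Section~\ref{sec: Main Results} is formulated to handle self-similar product measures on the matrix variable $\theta$ (rather than only on the shift $\xi$); the singly metric setup in which $\theta$ varies fractally is strictly stronger than the Lebesgue case of Corollary~\ref{cor intro 1} and requires the friendliness hypotheses to interact correctly with the dynamics. Second, one must check that the numerical bound truly degenerates to exactly $0$ in this regime, rather than merely being small. This second point is where the proof relies on the full strength of the KKLM-type dynamical input — quantitative non-divergence and escape-of-mass estimates for the relevant one-parameter diagonal flow on $\SL_d(\R)/\SL_d(\Z)$ — now applied along $\mu$-typical $\theta$ via the Federer and absolutely decaying properties of the Cantor product measure.
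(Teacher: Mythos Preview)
Your high-level strategy is right --- reduce to an upper bound that vanishes for $\mu$-typical $\theta$ --- but you misidentify where the fractal input enters, and this misreads the structure of the argument. The paper's proof factors into two independent pieces. First, Corollary~\ref{main thm 2''} (a consequence of Theorem~\ref{main thm 2}) gives the \emph{pointwise} bound $\dim_P(\Sing_\theta(a,b)) \leq m\,\EMassu(\theta)$ in the equal-weight setting, valid for \emph{every} $\theta$; no measure on $\theta$, no friendliness, no Federer hypothesis appears here. Hence Corollary~\ref{cor 1}: whenever $\EMassu(\theta)=0$, both dimensions vanish. Second, Remark~\ref{rem: wide measures} invokes the external result of Simmons--Weiss to conclude that $\EMassu(\theta)=0$ for $\mu$-a.e.\ $\theta$ when $\mu$ is the natural measure on (a product of) the Cantor set. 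The friendliness/decay properties of $\mu$ enter only in this second step, and only to force zero escape of mass; they are never substituted into any dimension formula.

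Your proposal conflates these two steps. You expect the bound to ``collapse to $0$ once the dimension and friendliness constants of $\mu$ are substituted,'' and you worry whether the main theorem is ``formulated to handle self-similar product measures on the matrix variable $\theta$.'' Neither concern is germane: Theorem~\ref{main thm 2} does not take a measure on $\theta$ as input at all, and the fractal $\Kcal$ appearing there lives in $\R^m$ (the $\xi$-variable), not in $\Mat$, so it plays no role in Corollary~\ref{cor intro 3}. What you describe as the ``KKLM-type dynamical input'' is already absorbed into the proof of the pointwise bound; what remains is simply the a.e.\ vanishing of $\EMassu(\theta)$ along the Cantor measure, which is a cited equidistribution result rather than part of the present machinery.
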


\begin{cor}
    \label{cor intro 4}
    Let $\mathcal{C}_3$ denote the middle-thirds Cantor set, and assume that either $m=1$ or $n=1$. Then for any \(\xi \in \R^m\), we have
    \begin{align*}
    \dim_H(\Sing^\xi(a,b)) &\leq \dim_P(\Sing^\xi(a,b))
        \leq \frac{\log 2}{ \log 3} \left( mn - \frac{1}{a_1 + b_1} \min\{m a_m, n b_n\} \right), \\
        \dim_H(\Sing^\xi(a, b, \omega) ) & \leq \dim_P(\Sing^\xi(a, b, \omega) ) \leq \frac{\log 2}{ \log 3} \left( mn - \frac{1}{a_1 + b_1} \left( \min\{m a_m, n b_n\} + \frac{m a_m b_n \omega}{a_m + b_n + a_m \omega} \right) \right).
    \end{align*}
    In particular, when \(a_1 = \cdots = a_m = 1/m\) and \(b_1 = \cdots = b_n = 1/n\), it follows that
    \begin{align*}
        \dim_H(\Sing^\xi(a,b)) &\leq \dim_P(\Sing^\xi(a,b))
        \leq \frac{\log 2}{ \log 3} \left( mn - \frac{mn}{m+n} \right), \\
        \dim_H(\Sing^\xi(a, b, \omega) ) & \leq \dim_P(\Sing^\xi(a, b, \omega) ) \leq \frac{\log 2}{ \log 3} \left( mn - \frac{mn}{m+n} \left( 1 + \frac{ m \omega}{m + n + n \omega} \right) \right).
    \end{align*}
\end{cor}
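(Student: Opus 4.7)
The plan is to derive Corollary~\ref{cor intro 4} by applying the fractal version of the main theorem of the paper (stated in Section~\ref{sec: Main Results}) to the canonical self-similar measure on a suitable product of middle-thirds Cantor sets embedded in $\Mat$, interpreting the displayed inequalities as bounds on $\dim_{H}$ and $\dim_{P}$ of $\Sing^\xi(a,b)\cap K$ for this product $K$.

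First I would set up the ambient fractal. When $n=1$, identify $\Mat$ with $\R^m$ and take $K = \mathcal{C}_3^m$; when $m=1$, identify $\Mat$ with $\R^n$ and take $K = \mathcal{C}_3^n$. In both cases $K$ is self-similar, satisfies the open set condition, has Hausdorff dimension $mn\cdot\tfrac{\log 2}{\log 3}$, and supports a natural Ahlfors regular self-similar probability measure $\mu$ of the same exponent. The main fractal theorem, applied to $(K,\mu)$ and the fixed $\xi \in \R^m$, then yields
\[
\dim_P\bigl(\Sing^\xi(a,b)\cap K\bigr) \leq \dim_H K - \frac{\log 2}{\log 3}\cdot\frac{\min\{m a_m,\, n b_n\}}{a_1+b_1},
\]
together with the analogous inequality for $\Sing^\xi(a,b,\omega)$ with the extra contribution $\tfrac{m a_m b_n \omega}{(a_m + b_n + a_m \omega)(a_1+b_1)}$ inside the parentheses. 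The Ahlfors exponent enters as the multiplicative factor $\log 2 / \log 3$ because the KKLM-style covering/entropy estimates underlying the main theorem control the $\mu$-mass of bad trajectories at scale $e^{-t}$, and a $\mu$-ball of radius $r$ carries mass $\asymp r^s$ with $s = mn\log 2/\log 3$, so the dimension is lost at a rate proportional to $s/mn$. The Hausdorff bound is immediate from the packing bound, and substituting $\dim_H K = mn\log 2/\log 3$, followed by the equal-weight specialization $a_i = 1/m$, $b_j = 1/n$, yields the displayed inequalities.

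The one genuinely content-ful step, and the reason for the restriction $m=1$ or $n=1$, will be verifying that $K$ falls within the hypotheses of the main fractal theorem. For $m,n \geq 2$, the natural candidate $\mathcal{C}_3^{mn}\subset \Mat$ does not interact coherently with the $(a,b)$-weighted diagonal flow on $\SL_d(\R)/\SL_d(\Z)$ that appears in the Dani correspondence: this flow mixes rows and columns with the distinct weights $a_i$ and $b_j$, whereas a genuine $mn$-fold tensor product decomposition splits $\R^{mn}$ into one-dimensional factors that are not simultaneously adapted to both the row and the column structure. When $m$ or $n$ equals $1$, the matrix $\theta$ is a single row or column, the two decompositions coincide, and the main theorem (formulated for fractal measures compatible with a single such decomposition) applies verbatim. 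Once this compatibility is in place, the Ahlfors constant propagates through the covering/entropy estimates exactly as the multiplicative factor $\log 2/\log 3$, and the remainder of the argument is arithmetic substitution into the bounds from Corollary~\ref{cor intro 2}.
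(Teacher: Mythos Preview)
Your overall strategy is the same as the paper's: apply Theorem~\ref{main thm 1} with $\Kcal_{ij}=\mathcal{C}_3$ for all $i,j$, use the inclusion $\Sing(a,b)\subset\Div(a,b,1)$, and read off the bounds. The paper does exactly this, choosing
\[
\eta_l=\frac{\log 2}{\log 3}\cdot
\begin{cases}
m/l & l\le m,\\
n/(m+n-l) & l>m,
\end{cases}
\]
which is admissible by \eqref{eq: main thm 4} or \eqref{eq: main thm 5}, and then computing $\min_l\eta_l w_l=\tfrac{\log 2}{\log 3}\min\{ma_m,nb_n\}$.

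Where your write-up goes wrong is in the explanation of the hypothesis $m=1$ or $n=1$. Theorem~\ref{main thm 1} is stated for \emph{arbitrary} $m,n$ and for fractals that are precisely products $\prod_{i,j}\Kcal_{ij}$ of one-dimensional pieces; there is no incompatibility between $\mathcal{C}_3^{mn}$ and the weighted flow, and your discussion of ``row and column decompositions not coinciding'' is not the obstruction. The first part of Theorem~\ref{main thm 1} already gives some $\eta_1,\dots,\eta_{d-1}>0$ for any $m,n$. The restriction to $m=1$ or $n=1$ enters only through the ``Moreover'' clauses \eqref{eq: main thm 4}, \eqref{eq: main thm 5}, which supply \emph{explicit} values of the $\eta_l$ in terms of $\dim_H(\Kcal_{ij})$; these explicit values are what make the $\log 2/\log 3$ factor visible. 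Without them (i.e.\ for general $m,n\ge 2$ and a genuine fractal) Theorem~\ref{main thm 1} still applies, but the constants $\eta_l$ are not computed, so you cannot write down the clean numerical bound of Corollary~\ref{cor intro 4}. Your heuristic that the Ahlfors exponent propagates ``because a $\mu$-ball of radius $r$ has mass $\asymp r^s$'' is too vague; the actual mechanism is that the critical exponents $\zeta_l(\mu)$ in Proposition~\ref{Critical Exponent is positive} are bounded below by $\min_{i}\dim_H(\Kcal_{i1})$ (or $\min_j\dim_H(\Kcal_{1j})$), and this is where $\log 2/\log 3$ enters the $\eta_l$.
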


\begin{rem}
    In Section~\ref{sec: Main Results}, we will further study $\Sing_\theta(a,b)$ when $\theta$ is sampled from measures supported on non-degenerate curves, affine hyperplanes, and broader classes of fractals; see Corollary~\ref{cor 1} and Remark~\ref{rem: wide measures}. We will also provide upper bounds on $\Sing_\theta(a,b)$ explicitly in terms of dynamical properties of $\theta$; see Corollary~\ref{main thm 2'}. Furthermore, we will establish upper bounds for $\Sing^\xi(a,b)$ when intersected with a wide class of fractals, and obtain uniform estimates independent of $\xi$; see Theorem~\ref{main thm 1}.
\end{rem}

\medskip

\subsection{Key Ideas of the Paper}

The proof of the upper bounds in the case of a fixed matrix~$\theta$ relies on the following simple yet fundamental observation.  

Let $\Lambda$ be a homogeneous lattice in $\R^d$ whose shortest non-zero vector has length at least~$\e$. Suppose $x_1, x_2 \in \R^d$ satisfy $\|x_1 - x_2\| < \e/2$, where $\|\cdot\|$ denotes the supremum norm on
$\R^d$. Assume that both affine lattices $\Lambda + x_1$ and $\Lambda + x_2$ contain a vector of length less than $\delta < \e/4$. Then it follows that $\|x_1 - x_2\| < 2\delta$.  

The proof proceeds by iteratively applying this elementary observation. Specifically, it is used to construct a sequence of coverings of $\Sing_\theta(a,b)$, which improve at each step—fewer balls are required relative to their size—whenever the trajectory $(g_t u(\theta) \Z^d)_{t \geq 1}$ (see Section~\ref{sec: Main Results} for definitions) returns to a fixed compact set. While this iterative procedure is relatively simple in the real case, it becomes significantly more delicate when dealing with fractals. Using Lemma~\ref{lem: Falconer}, these coverings ultimately yield the desired dimension bounds stated in the theorem.

The proof of the upper bounds in the case of a fixed shift~$\xi$ is entirely different and closely follows the approach in~\cite{aggarwalghoshsingular}, which was itself inspired by~\cite{KKLM} and~\cite{Khalilsing}. The central idea is to construct a height function whose divergent trajectories correspond precisely to $\Sing^\xi(a,b)$. The construction of this height function is motivated by~\cite{Shi20}. Once the height function is established, the upper bound on the dimension follows directly from~\cite[Thm.~6.5]{aggarwalghoshsingular}.

However, a key difference between the present proof and that of~\cite{aggarwalghoshsingular} lies in the nature of the height function. The height function in~\cite{aggarwalghoshsingular} captures divergent trajectories under the diagonal flow, whereas the height function constructed here measures how long an orbit spends near a fixed homogeneous subspace—specifically, the space of homogeneous lattices within the space of affine unimodular lattices. This distinction is crucial: the new height function detects orbits that accumulate near smaller homogeneous subspaces, rather than focusing on divergence. Consequently, the present problem differs fundamentally from the study of divergent trajectories, and the associated height function is entirely different.

\medskip

\subsection{Structure of the Paper}  
Section~\ref{sec: Main Results} presents the main results of the paper. Thereafter, the paper is divided into two parts. The first part concerns the case where \(\theta\) is fixed and is devoted to the proof of Theorem~\ref{main thm 2}, while the second part treats the case where \(\xi\) is fixed and establishes Theorem~\ref{main thm 1}.  

The first part begins with Section~\ref{sec: Notation I}, which introduces the notation used throughout the paper. Section~\ref{sec: Dimension bound in Generalized Setup I} develops a generalized dynamical framework and establishes a version of Theorem~\ref{main thm 2} in that setting. Section~\ref{sec: Final Proof I} then completes the proof of Theorem~\ref{main thm 2}, concluding the first part.  

The second part begins with Section~\ref{sec: Notation II}, which introduces additional notation needed for the proof of Theorem~\ref{main thm 1}. Section~\ref{sec: Dimension Bound in Generalized Setup II} recalls relevant results from~\cite{aggarwalghoshsingular}. Section~\ref{sec: Height Function} is devoted to the construction of a height function, whose divergent trajectories correspond precisely to singular-on-average affine forms. Finally, Section~\ref{sec: Final Proof II} combines the ingredients from the preceding sections to complete the proof of Theorem~\ref{main thm 1}.

 \medskip

\subsection{Acknowledgements}
The author would like to thank Anish Ghosh for suggesting the problem
and for numerous discussions throughout the development of this paper.
The author is also grateful to Dmitry Kleinbock for useful comments
and for spotting some inaccuracies in an earlier draft.
The author thanks the anonymous referee for careful reading and
helpful comments which improved the clarity of the paper.

\section{Main Results}
\label{sec: Main Results}

\subsection{Homogeneous Spaces}
\label{subsec: Homogenous space}
We set
\[
\widetilde{G} = \SL_{d}(\R) \ltimes \R^{d}, 
\quad 
\widetilde{\Gamma} = \SL_{d}(\Z) \ltimes \Z^{d},
\]
where the group structure on $\widetilde{G}$ is given by
\[
[A, w][B, v] = [AB,\, w + A v].
\]
Also, denote by $\widetilde{\X} = \widetilde{G}/\widetilde{\Gamma}$ the corresponding finite-volume quotient.  
This quotient admits a natural interpretation as the space of \emph{affine unimodular lattices} in $\R^{d}$, that is, unimodular lattices accompanied by a translation.  
Explicitly, this correspondence is given by
\[
[A, v] \widetilde{\Gamma} \longmapsto A\Z^{d} + v.
\]

Similarly, we define
\[
G = \SL_{d}(\R), 
\quad 
\Gamma = \SL_{d}(\Z),
\]
and let \(\X = G / \Gamma\) denote the finite-volume quotient, naturally identified with the space of unimodular lattices in \(\R^{d}\) via
\[
A \Gamma \longmapsto A \Z^{d}.
\]
Throughout this paper, we regard \(G\) as a subgroup of \(\widetilde{G}\) via the embedding \(g \mapsto [g, 0]\), and similarly identify \(\X\) with a subset of \(\widetilde{\X}\) via \(A \Gamma \mapsto [A, 0] \widetilde{\Gamma}\).  

We denote by 
\[
\pi : \widetilde{\X} \to \X
\]
the natural projection map, explicitly given by
\[
\pi([A, v] \widetilde{\Gamma}) = A \Gamma.
\]

For \(t > 0\) and \(\theta \in \Mat\), define
\begin{equation}
\label{eq: def u x}
g_t = 
\begin{pmatrix}
t^{a_1} \\ & \ddots \\ && t^{a_m} \\ &&& t^{-b_1} \\ &&&& \ddots \\ &&&&& t^{-b_n}
\end{pmatrix},
\quad
u(\theta) =
\begin{pmatrix}
I_m & \theta \\ & I_n
\end{pmatrix}.
\end{equation}
For each \(\xi \in \R^m\), we define the vector \(v(\xi) \in \R^{d}\) as
\[
v(\xi) =
\begin{pmatrix}
\xi \\ 0
\end{pmatrix},
\]
that is, the first \(m\) coordinates of \(v(\xi)\) are given by \(\xi\), and the remaining \(n\) coordinates are zero.

\medskip

\subsection{Dani's correspondence}

By Dani’s correspondence, the Diophantine properties of a pair \((\theta, \xi)\) are reflected in the behavior of the diagonal orbit
\[
(g_t [u(\theta), v(\xi)] \widetilde{\Gamma})_{t \geq 1} \subset \widetilde{\X}.
\]
To state this correspondence precisely, let
\[
\lambda_0 : \widetilde{\X} \to [0,\infty), \quad 
\lambda_0(y) = \min\{ \|w\| : w \in y \setminus \{0\} \},
\]
where $\|\cdot\|$ denotes the supremum (i.e. $\ell^\infty$) norm on $\R^d$.

The following lemmas formalizes this connection. 

\begin{lem}
\label{lem: Sing Dynamical Interpretation}
If \((\theta, \xi) \in \Sing(a,b)\), then 
\[
\lambda_0(g_t [u(\theta), v(\xi)] \widetilde{\Gamma}) \longrightarrow 0
\quad \text{as } t \to \infty.
\]
\end{lem}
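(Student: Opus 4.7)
The plan is to show directly from the definitions that for any $\epsilon > 0$, one has $\lambda_0(g_t [u(\theta), v(\xi)] \widetilde{\Gamma}) < \epsilon$ for all sufficiently large $t$. First I would unpack the affine lattice: a general element of $[u(\theta), v(\xi)] \widetilde{\Gamma} = u(\theta) \Z^d + v(\xi)$ is of the form $\bigl(\, p + \theta q + \xi,\; q\,\bigr)^{T}$ with $(p,q) \in \Z^m \times \Z^n$, and applying $g_t$ scales the first block coordinatewise by $t^{a_i}$ and the second block coordinatewise by $t^{-b_j}$. The goal then reduces to producing, for each large $t$, an integer pair $(p,q)$ with $q \ne 0$ such that the scaled coordinates are simultaneously small.

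The key trick is to calibrate the singular-height parameter $T$ to $t$. Given a target accuracy $\epsilon > 0$, I would fix $\epsilon' > 0$ (to be chosen small) and set $T := \sqrt{\epsilon'}\, t$. Since $(\theta,\xi) \in \Sing(a,b)$, there exists $T_{\epsilon'}$ such that for $T > T_{\epsilon'}$ the singular inequalities admit a solution $(p,q) \in \Z^m \times (\Z^n \setminus \{0\})$ with $\|p + \theta q + \xi\|_a \le \epsilon'/T$ and $\|q\|_b \le T$. Translating back via the quasi-norm definitions, $|p_i + (\theta q)_i + \xi_i| \le (\epsilon'/T)^{a_i}$ and $|q_j| \le T^{b_j}$, and substituting $T = \sqrt{\epsilon'}\,t$ gives
\[
t^{a_i}\, |p_i + (\theta q)_i + \xi_i| \le \bigl(\epsilon'/\sqrt{\epsilon'}\bigr)^{a_i} = (\sqrt{\epsilon'})^{a_i},
\qquad
t^{-b_j}\,|q_j| \le (\sqrt{\epsilon'})^{b_j}.
\]

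Since all $a_i, b_j > 0$, the right-hand sides tend to $0$ as $\epsilon' \to 0$, so choosing $\epsilon'$ small enough (depending on $\epsilon$, $d$, and $\min_{i,j}\{a_i,b_j\}$, to absorb the constant in passing from the sup norm to the Euclidean norm) yields a nonzero vector in $g_t[u(\theta),v(\xi)]\widetilde{\Gamma}$ of Euclidean norm $<\epsilon$. Nonvanishing is guaranteed by $q \ne 0$, which survives $g_t$ since the bottom $n$ entries are $t^{-b_j} q_j$. The threshold is $t_0 := T_{\epsilon'}/\sqrt{\epsilon'}$: for $t > t_0$ we have $T > T_{\epsilon'}$, so the singular inequalities apply.

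There is no real obstacle here, only bookkeeping: the proof is essentially the inhomogeneous version of Dani's correspondence and amounts to choosing $T$ as a judicious function of $t$ so that the ``matrix part'' of $g_t$ and the ``shift part'' balance out. The one mildly delicate point is verifying that the produced vector is nonzero (handled by $q \ne 0$) and keeping track of the equivalence constant between $\|\cdot\|_\infty$ and $\|\cdot\|_2$ on $\R^d$ when passing to the definition of $\lambda_0$; neither is serious.
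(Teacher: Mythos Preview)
Your proof is correct and follows the same approach as the paper: both reparametrize the singular parameter $T$ in terms of the flow time $t$ via a small multiplicative factor, then read off coordinatewise bounds on the image vector under $g_t$. The only cosmetic difference is that the paper uses the weight-optimized scaling $\tau = \delta^{-a_m/(a_m+b_n)} t$ (yielding the uniform bound $\delta^{a_m b_n/(a_m+b_n)}$ on every coordinate), whereas your symmetric choice $T = \sqrt{\epsilon'}\, t$ gives the coordinate-dependent bounds $(\sqrt{\epsilon'})^{a_i}$, $(\sqrt{\epsilon'})^{b_j}$---equally sufficient here since only convergence to $0$ is needed.
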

\begin{proof}
By definition, $(\theta,\xi) \in \Sing(a,b)$ if, for every $\delta > 0$, there exists $T_{\delta}>0$ such that for all $t > T_{\delta}$, there exist integers $(p, q) \in \Z^m \times (\Z^n \setminus \{0\})$ with the vector 
\[
z = (p + \theta q + \xi, q) \in u(\theta) \Z^d + v(\xi)
\] 
satisfying
\begin{align*}
    |z_i|^{1/a_i} &\leq \frac{\delta}{t}, \quad 1 \leq i \leq m, \\
    |z_{j+m}|^{1/b_j} &\leq t, \quad 1 \leq j \leq n.
\end{align*}

Set 
\[
\tau = \delta^{-a_m/(a_m+b_n)} t.
\] 
Then $g_{\tau}(u(\theta)\Z^d + v(\xi))$ contains the vector $g_{\tau} z = (z_1', \ldots, z_d')$, where
\begin{align*}
    |z_i'| &= \tau^{a_i} |z_i| \leq (\delta^{-a_m/(a_m+b_n)} t)^{a_i} (\delta t^{-1})^{a_i} = \delta^{a_m b_n/(a_m+b_n)}, \quad 1 \leq i \leq m, \\
    |z_{j+m}'| &= \tau^{-b_j} |z_{j+m}| \leq (\delta^{-a_m/(a_m+b_n)} t)^{-b_j} t^{b_j} = \delta^{a_m b_n/(a_m+b_n)}, \quad 1 \leq j \leq n,
\end{align*}
where we used that $a_m = \min_i a_i$ and $b_n = \min_j b_j$.  

This shows that 
\begin{align}
\label{eq: t 1}
\lambda_0(g_\tau [u(\theta), v(\xi)]\widetilde{\Gamma}) \leq \delta^{a_m b_n/(a_m+b_n)}.
\end{align}

Since \eqref{eq: t 1} holds for all $\tau > T_\delta \delta^{-a_m/(a_m+b_n)}$, and $\delta > 0$ is arbitrary, the lemma follows.
\end{proof}
\begin{rem}
\label{rem:interpret singular}
Note that for any sequence $(x_n)$ in $\widetilde{\X} \setminus \X$, we have $\lambda_0(x_n) \to 0$ if and only if the distance from $x_n$ to the homogeneous subspace $\X$ tends to $0$ as $n \to \infty$. In particular, the sequence $(x_n)$ may converge to a point in ${\X}$. Combined with Lemma~\ref{lem: Sing Dynamical Interpretation}, this shows that the study of singular affine forms is fundamentally different from the study of divergent trajectories in $\widetilde{\X}$.
\end{rem}

\medskip

\begin{lem}
\label{lem: omega sing dynamical Interpretation}
Let $0 < \omega < \omega'$. If $(\theta,\xi) \in \Sing(a,b,\omega')$, then there exists $T_{(\theta,\xi)} = T_{(\theta,\xi)}(\omega)$ such that for all $t > T_{(\theta,\xi)}$, we have
\[
\lambda_0(g_t [u(\theta), v(\xi)] \Z^d) \le t^{-\frac{a_m b_n \omega}{a_m + b_n + a_m \omega}}.
\]
\end{lem}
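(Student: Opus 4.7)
\textbf{Proof plan for Lemma~\ref{lem: omega sing dynamical Interpretation}.} The strategy mirrors Lemma~\ref{lem: Sing Dynamical Interpretation}, but one has to optimize the renormalization time $\tau$ against the approximation scale $T$ more carefully in order to extract the precise exponent $\frac{a_m b_n \omega}{a_m+b_n+a_m\omega}$.

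First, I would unpack the hypothesis. Since $\hat\omega(\theta,\xi,a,b)\ge\omega'>\omega$, for all $T$ larger than some $T_{(\theta,\xi)}(\omega)$ there exist $(p,q)\in\Z^m\times(\Z^n\setminus\{0\})$ satisfying
\[
\|p+\theta q+\xi\|_a \le T^{-(1+\omega)}, \qquad \|q\|_b\le T,
\]
which translates coordinate-wise into $|p_i+(\theta q)_i+\xi_i|\le T^{-a_i(1+\omega)}$ and $|q_j|\le T^{b_j}$. Setting $z=(p+\theta q+\xi,q)$, this vector lies in the affine lattice $u(\theta)\Z^d+v(\xi)$.

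Next, I would apply $g_\tau$ to $z$ and estimate the resulting entries. The first $m$ coordinates are bounded by $(\tau/T^{1+\omega})^{a_i}$ and the last $n$ by $(T/\tau)^{b_j}$. As long as one chooses $T\le\tau\le T^{1+\omega}$, both bases are in $(0,1]$, and since $a_m=\min_i a_i$ and $b_n=\min_j b_j$, the coordinatewise maxima are $(\tau/T^{1+\omega})^{a_m}$ and $(T/\tau)^{b_n}$ respectively. The natural optimization is to equalize these two quantities, which forces
\[
\tau = T^{\frac{a_m+b_n+a_m\omega}{a_m+b_n}},
\]
and substitution then gives the common value $T^{-\frac{a_m b_n\omega}{a_m+b_n}}$. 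Re-expressing $T$ in terms of $\tau$ through the same relation, this common bound equals exactly $\tau^{-\frac{a_m b_n\omega}{a_m+b_n+a_m\omega}}$, as required.

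Finally, I would verify that the map $T\mapsto\tau(T)=T^{(a_m+b_n+a_m\omega)/(a_m+b_n)}$ is a continuous increasing bijection of a half-line onto a half-line, so that every sufficiently large $\tau$ is realized by some admissible $T\ge T_{(\theta,\xi)}(\omega)$; this lets me pass from the ``for all large $T$'' statement in the definition of $\Sing(a,b,\omega')$ to the ``for all large $\tau$'' statement in the conclusion. There is no serious obstacle here—the only non-trivial step is identifying the correct choice of $\tau$, which is forced by the balancing condition between the contracted and expanded directions. Interpreting $g_\tau z$ as a vector in $g_\tau[u(\theta),v(\xi)]\Z^d$ completes the proof.
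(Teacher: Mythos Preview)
Your proposal is correct and follows essentially the same approach as the paper: both set $\tau=T^{(a_m+b_n+a_m\omega)/(a_m+b_n)}$, bound the coordinates of $g_\tau z$ by $T^{-a_mb_n\omega/(a_m+b_n)}=\tau^{-a_mb_n\omega/(a_m+b_n+a_m\omega)}$ using that $a_m,b_n$ are the minimal exponents, and then observe that the map $T\mapsto\tau$ sweeps out a half-line. Your write-up is slightly more explicit about where the choice of $\tau$ comes from (balancing the contracted and expanded directions) and about the surjectivity onto large $\tau$, but the argument is the same.
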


\begin{proof}
By definition, if $(\theta,\xi) \in \Sing(a,b,\omega')$, then there exists $T_{(\theta,\xi)}$ such that for all $t > T_{(\theta,\xi)}$, there exist integers $(p,q) \in \Z^m \times (\Z^n \setminus \{0\})$ with the vector
\[
z = (p + \theta q + \xi, q) \in u(\theta)\Z^d + v(\xi)
\]
satisfying
\begin{align*}
|z_i|^{1/a_i} &\le t^{-1-\omega}, \quad 1 \le i \le m,\\
|z_{j+m}|^{1/b_j} &\le t, \quad 1 \le j \le n.
\end{align*}

Set 
\[
\tau = t^{1 + \frac{a_m \omega}{a_m + b_n}}.
\]
Then $g_\tau (u(\theta)\Z^d + v(\xi))$ contains the vector $g_\tau z = (z_1', \dots, z_d')$, where
\begin{align*}
|z_i'| &= \tau^{a_i} |z_i| \le t^{a_i (1 + \frac{a_m \omega}{a_m + b_n})} t^{-a_i (1 + \omega)} \le t^{-\frac{a_m b_n \omega}{a_m + b_n}}, \quad 1 \le i \le m, \\
|z_{j+m}'| &= \tau^{-b_j} |z_{j+m}| \le t^{-b_j (1 + \frac{a_m \omega}{a_m + b_n})} t^{b_j} \le t^{-\frac{a_m b_n \omega}{a_m + b_n}}, \quad 1 \le j \le n,
\end{align*}
using that $a_m = \min_i a_i$ and $b_n = \min_j b_j$.  

Since $\tau = t^{1 + a_m \omega / (a_m + b_n)}$, this gives
\begin{align}
\label{eq: t 2}
    \lambda_0(g_\tau [u(\theta), v(\xi)] \Z^d) \le \tau^{-\frac{a_m b_n \omega}{a_m + b_n + a_m \omega}}.
\end{align}

Since \eqref{eq: t 2} holds for all $\tau > T_{(\theta,\xi)}^{1 + a_m \omega / (a_m + b_n)}$, the lemma follows.
\end{proof}

\medskip

\subsection{Singularity on average}

A more general way of quantifying the notion of singularity is through \emph{singularity on average}, introduced in \cite{KKLM} (see also \cite{DFSU}). More precisely, for $0 < q \leq 1$, we define $\Div(a,b,q)$ as set of all $(\theta, \xi) \in \Mat \times \R^m $ such that
\[
\lim_{\varepsilon \to 0} \liminf_{T \to \infty} \frac{1}{T} \, m_{\R}\big(\{ t \in [0,T] : \widetilde{\lambda}_0( [ g_{e^t}u(\theta), v(\xi)] \Z^d )  \leq \varepsilon \} \big) \geq q ,
\]
where $\widetilde{\lambda}_0: \widetilde{\X} \rightarrow [0, \infty)$ is defined as
$$\widetilde{\lambda}_0( y ) = \min\{\|v\| : v \in \Lambda\}.$$
\begin{rem}
    Note that $\widetilde{\lambda}_0(y) = 0$ if and only if $0 \in y$, i.e., $y$ belongs to the homogeneous subspace $\X$.
\end{rem}

For a fixed $\xi \in \R^m$, we define
\[
\Div^\xi(a,b,q) := \{\theta \in \Mat : (\theta, \xi) \in \Div(a,b,q)\},
\]
and for a fixed $\theta \in \Mat$, we define
\[
\Div_\theta(a,b,q) := \{\xi \in \R^m : (\theta, \xi) \in \Div(a,b,q)\}.
\]

The set $\Div(a,b,1)$ is often referred to as the set of $(a,b)$-singular-on-average affine forms, whereas $\Div^0(a,b,1)$ is referred to as the set of $(a,b)$-singular-on-average matrices.

In this paper we establish upper bounds for the Hausdorff and packing
dimensions of the sets $\Div_\theta(a,b,q)$ and $\Div^\xi(a,b,q)$.
Moreover, since by Lemma~\ref{lem: Sing Dynamical Interpretation} we have
\begin{align}
    \label{eq: inclusion}
    \Sing(a,b)\subset\Div(a,b,1),
\end{align}
these bounds immediately imply corresponding estimates for
$\Sing_\theta(a,b)$ and $\Sing^\xi(a,b)$.

\begin{rem}
As explained in Remark~\ref{rem:interpret singular}, the set $\Div(a,b,q)$—despite its name suggesting divergence—does not correspond to trajectories exhibiting at least $q$-escape of mass. Instead, it consists of trajectories that spend at least a $q$-proportion of time near the homogeneous subspace $\X \subset \widetilde{\X}$.
\end{rem}

\medskip

\subsection{Dimension bounds for fixed matrix $\theta$}

To state the results, we introduce the following notation. For $\theta \in \Mat$, define the lower and upper escape of mass of the trajectory $(g_t u(\theta)\Z^d)_{t \geq 1}$ as
\begin{align*}
\EMassl(\theta) &= \lim_{\varepsilon \to 0} \liminf_{T \to \infty} \frac{1}{T} m_\R\big(\{ t \in [0,T] : \lambda_0(g_{e^t} u(\theta)\Z^d) \leq \varepsilon \} \big), \\
\EMassu(\theta) &= \lim_{\varepsilon \to 0} \limsup_{T \to \infty} \frac{1}{T} m_\R\big(\{ t \in [0,T] : \lambda_0(g_{e^t} u(\theta)\Z^d) \leq \varepsilon \} \big).
\end{align*}

\begin{rem}
\label{rem: Meaning of EMass}
Using Mahler's compactness criterion, another way to interpret $\EMassl(\theta)$ and $\EMassu(\theta)$ is via weak-* limits of the measures
\begin{align}
\label{eq: xzs 1}
\frac{1}{T} \int_{0}^T \delta_{g_{e^t} u(\theta) \Gamma} \, dt
\end{align}
in $\X$.  

Specifically, $\EMassl(\theta)$ is the supremum of all $\alpha \geq 0$, and $\EMassu(\theta)$ is the infimum of all $\beta \geq 0$, such that every subsequential limit $\nu$ of the sequence \eqref{eq: xzs 1} satisfies
\[
\alpha \leq 1- \nu(\X) \leq \beta.
\]
\end{rem}

We now state our main result.

\begin{thm}
\label{main thm 2}
Let $r_1, \ldots, r_l \in \N$ satisfy $r_1 + \cdots + r_l = m$, and suppose that $a_i = a_j$ whenever there exists $1 \leq k \leq l$ such that 
\[
r_1 + \cdots + r_{k-1} < i \leq j \leq r_1 + \cdots + r_k,
\]
where $r_{0}:=0$.

For $1 \leq i \leq l$, let $w_i$ denote the common value of $a_j$ for indices $r_1 + \cdots + r_{i-1} < j \leq r_1 + \cdots + r_i$.

For each $1 \leq i \leq l$, let $\Phi_i$ be an iterated function system (IFS) of contracting similarities on $\R^{r_i}$ with equal contraction ratios, satisfying the open set condition (see Section~\ref{sec: Iterated Function Systems} for more details). Let $\Kcal_i$ denote the limit set of $\Phi_i$, and set
\[
s_i := \dim_H(\Kcal_i).
\]
Define
\[
\Kcal := \Kcal_1 \times \cdots \times \Kcal_l \subset \R^m.
\]

Then, for any $\theta \in \Mat$ and $0 < q \leq 1$, we have
\begin{align*}
\dim_H(\Div_\theta(a,b,q) \cap \Kcal) &\leq \min_{1 \leq k \leq l} \frac{1}{w_k} \sum_{i=1}^l s_i \Big( \max\{w_i, w_k\} - w_i q + w_i \EMassl(\theta) \Big), \\
\dim_P(\Div_\theta(a,b,q) \cap \Kcal) &\leq \min_{1 \leq k \leq l} \frac{1}{w_k} \sum_{i=1}^l s_i \Big( \max\{w_i, w_k\} - w_i q + w_i \EMassu(\theta) \Big).
\end{align*}
\end{thm}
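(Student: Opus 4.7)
The plan is to construct, for each large $T$, an efficient anisotropic box cover of $\Div_\theta(a,b,q)\cap\Kcal$ at anisotropic scales $e^{-w_iT}$ in block $i$, convert it into a uniform ball cover of radius $\rho=e^{-w_kT}$, and feed the resulting cover count into Lemma~\ref{lem: Falconer} (used along a subsequence of $T$ for the Hausdorff bound and along all large $T$ for the packing bound). Optimising over $k\in\{1,\ldots,l\}$ yields the minimum in the statement.

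First, I extract the relevant good times. For $\eta>0$ small, the definitions of $\EMassl(\theta)$ and $\EMassu(\theta)$ supply $\e>0$ such that
\[
H(T):=\{t\in[0,T]:\lambda_0(g_{e^t}u(\theta)\Z^d)>\e\}
\]
has density at least $1-\EMassl(\theta)-\eta$ along some sequence $T_n\to\infty$, and at least $1-\EMassu(\theta)-\eta$ for every large $T$. Picking $\e'<\e/8$, the definition of $\Div_\theta(a,b,q)$ gives, for every $\xi\in\Div_\theta(a,b,q)$,
\[
I^\xi(T):=\{t\in[0,T]:\widetilde\lambda_0(g_{e^t}[u(\theta),v(\xi)]\Z^d)\le\e'\}
\]
with density $\ge q-\eta$ for all large $T$. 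The good times $G^\xi(T):=H(T)\cap I^\xi(T)$ then have density $\ge q':=q-\EMassl(\theta)-2\eta$ (resp.\ $q-\EMassu(\theta)-2\eta$) along the appropriate $T$. The core rigidity follows from the elementary observation in the Key Ideas subsection: if $\xi_1,\xi_2\in\Div_\theta(a,b,q)$ satisfy $\|v(\xi_1)-v(\xi_2)\|<\e/2$ and share a common good time $t$, then $\|g_{e^t}v(\xi_1-\xi_2)\|\le 2\e'$, which decomposes into the block-wise bound $\|\xi_1^{(i)}-\xi_2^{(i)}\|_\infty\le 2\e' e^{-w_it}$ for each $i=1,\ldots,l$.

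Next, I iterate over the return times $\tau_1<\tau_2<\cdots$ of the homogeneous trajectory to $H$ to refine the cover. Each parent anisotropic box at scale $e^{-w_i\tau_{j-1}}$ in block~$i$ subdivides, by the open set condition with equal contraction ratios for $\Phi_i$, into roughly $\exp\!\big((\tau_j-\tau_{j-1})\sum_is_iw_i\big)$ sub-boxes at scale $e^{-w_i\tau_j}$. Among these, the block-wise rigidity above together with a Minkowski-type count of integer vectors $(p,q)\in\Z^m\times\Z^n$ with $\|q\|_b\le e^{\tau_j}$ (using $\sum_k b_k=\sum_i r_iw_i=1$) shows that at most $O(1)$ sub-boxes per parent can host a $\xi$ for which $\tau_j$ is a good time. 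Crucially, the identity of these ``good'' sub-boxes is determined by the ambient dynamical data $g_{e^{\tau_j}}u(\theta)\Z^d$ and is independent of the individual $\xi$: any two $\xi$'s in the same parent that share $\tau_j$ as a common good time are forced into the same sub-box. Since every $\xi\in\Div_\theta(a,b,q)\cap\Kcal$ has at least a $q'$-fraction of its return-time steps good, propagating this count through the tree gives
\[
\log N_T \ \le\ T(1-q')\sum_i s_iw_i+o(T),
\]
where $N_T$ is the number of terminal anisotropic boxes at time $T$ meeting $\Div_\theta(a,b,q)\cap\Kcal$.

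Finally, I convert to balls of radius $\rho=e^{-w_kT}$. A terminal box of sides $e^{-w_iT}$ in block~$i$ fits in a single $\rho$-ball whenever $w_i\ge w_k$, while for $w_i<w_k$ the open set condition forces the intersection with $\Kcal_i$ to require $\sim e^{(w_k-w_i)s_iT}$ $\rho$-balls. Combining with the box bound,
\[
\log N_T^{\text{balls}}(\rho)\ \le\ T\sum_i s_i\big(\max\{w_i,w_k\}-w_iq'\big)+o(T).
\]
Dividing by $\log(1/\rho)=w_kT$, letting $\eta\to0$, and invoking Lemma~\ref{lem: Falconer} yields the claimed dimension inequality for each fixed $k$; minimising over $k$ completes the proof. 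The main obstacle is the bound $\log N_T\le T(1-q')\sum_i s_iw_i+o(T)$ from the previous paragraph: it hinges on the fact that the ``good sub-box'' at each return time is determined by the ambient dynamical data rather than by the individual $\xi$, since otherwise a naive union bound over the combinatorial choices of good-time patterns per $\xi$ would introduce a spurious entropy term $TH(q')$ that is not present in the theorem. Further technical care is needed to handle return times that are non-uniformly distributed in $[0,T]$ and to absorb the $O(1)$ constants from the Minkowski count into the $o(T)$ error.
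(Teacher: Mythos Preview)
Your overall architecture---define good times as ``affine lattice has a short vector \emph{and} homogeneous lattice does not'', use the rigidity observation to collapse parent boxes at good times, then convert anisotropic boxes to $\rho$-balls and apply Lemma~\ref{lem: Falconer}---matches the paper's. The rigidity step (your ``any two $\xi$'s in the same parent that share $\tau_j$ as a common good time are forced into the same sub-box'') is exactly the paper's Observation~1, and your ball conversion is its Observation~5.

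The gap is precisely the one you flag in your final paragraph, and your proposed resolution does not close it. You are right that, \emph{within a fixed parent box}, the collapse target at a good time is uniquely determined. But this does \emph{not} eliminate the combinatorial sum over good-time patterns: for a given parent $R$ at step $j$, the $\xi$'s in $R$ for which $j$ is good collapse to $O(1)$ sub-boxes, while the $\xi$'s in $R$ for which $j$ is \emph{not} good may land in any of the $\sim\exp((\tau_j-\tau_{j-1})\sum_i s_iw_i)$ sub-boxes. Since which steps are good varies with $\xi$, the tree still branches fully along each possible pattern, and the union over patterns is unavoidable. With your choice of iteration times (the actual return times $\tau_j$), the number $N$ of steps up to $T$ is typically of order $T$, so the pattern count contributes $\Theta(T)$ to $\log N_T$ and does not vanish after dividing by $w_kT$.

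The paper's fix is not to avoid the sum over patterns but to make it harmless: one discretises time with a \emph{large} multiplicative step $t>1$ (so $g=g_t$ and $N\approx T/\log t$), partitions by the full pattern $Q\subset\{1,\dots,N\}$, and accepts the crude factor $2^N$ together with the $O(1)$ constants $L$ and $\prod_i p_i$ at every step. This produces a term $N\log B$ with $B=2L\prod_i p_i$, which after dividing by $w_kN\log t$ contributes $\log B/(w_k\log t)$ to the dimension bound. Only at the very end does one send $t\to\infty$ (and $\e\to 0$, $q'\to q$), killing both the entropy term and the accumulated $O(1)$ constants simultaneously. Your scheme based on return times has no such free parameter to send to infinity; replacing the $\tau_j$ by the coarse grid $\{t^k:k\in\N\}$ and carrying $t$ through to the end is the missing ingredient.
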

\begin{rem}
    Note that in Theorem \ref{main thm 2}, we do not require $\dim_H(\Kcal_i) > 0$ for any $i$. However, if $\dim_H(\Kcal_i) = 0$ for all $i$, the bounds in the theorem are trivial. Furthermore, we do not require $r_1, \ldots, r_l$ to be maximal. For example, in the case of equal weights, that is, $a_1 = \cdots = a_m$, one could choose $l = m$ with $r_1 = \cdots = r_m = 1$, or $l = 1$ with $r_1 = m$, or any intermediate choice. 

    This flexibility, particularly in the equal weight case, allows us to study the sets $\Div_\theta(a, b, q) \cap \Kcal$ when $\Kcal$ is a single fractal (such as a line, a plane, or the Sierpiński carpet), or a product of fractals (such as the product of a middle-third Cantor set with a middle-fifth Cantor set).
\end{rem}

\medskip
We now state some immediate corollaries of Theorem~\ref{main thm 2}.

\begin{cor}
\label{main thm 2'}
For any $\theta \in \Mat$ and $0 < q \leq 1$, we have
\begin{align*}
\dim_H(\Div_\theta(a,b,q)) &\leq \min_{1 \leq k \leq m} \frac{1}{a_k} \sum_{i=1}^m \Big( \max\{a_i, a_k\} - a_i q + a_i \EMassl(\theta) \Big), \\
\dim_P(\Div_\theta(a,b,q)) &\leq \min_{1 \leq k \leq m} \frac{1}{a_k} \sum_{i=1}^m \Big( \max\{a_i, a_k\} - a_i q + a_i \EMassu(\theta) \Big).
\end{align*}
\end{cor}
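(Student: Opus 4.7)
The plan is to deduce Corollary~\ref{main thm 2'} as an essentially immediate specialization of Theorem~\ref{main thm 2}, combined with a standard translation argument to pass from a unit cube to all of $\R^m$.

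First I would apply Theorem~\ref{main thm 2} with the trivial partition $l = m$ and $r_1 = \cdots = r_m = 1$. With every block of size one, the weight compatibility condition $a_i = a_j$ for $r_1+\cdots+r_{k-1} < i \leq j \leq r_1+\cdots+r_k$ reduces to $a_i = a_i$ and imposes no restriction on the weights, and the common value $w_i$ on the $i$-th block equals $a_i$, matching the index structure appearing in the corollary's bound. For each $i$, I take $\Phi_i = \{x \mapsto x/2,\ x \mapsto x/2 + 1/2\}$, an IFS of similarities on $\R$ with equal contraction ratio $1/2$ satisfying the open set condition (with open set $(0,1)$), whose limit set is $\Kcal_i = [0,1]$, so $s_i = 1$. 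Then $\Kcal = [0,1]^m$, and Theorem~\ref{main thm 2} yields exactly the inequalities of the corollary, but for $\Div_\theta(a,b,q) \cap [0,1]^m$ in place of $\Div_\theta(a,b,q)$.

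Next I would remove the restriction to the unit cube via the $\Z^m$-invariance of $\Div_\theta(a,b,q)$. For any $k \in \Z^m$, the vector $v(k) = (k,0) \in \R^d$ lies in $\Z^d \subset u(\theta)\Z^d$ (take $q = 0$ in the definition of $u(\theta)$), and hence the affine lattice $u(\theta)\Z^d + v(\xi + k)$ coincides with $u(\theta)\Z^d + v(\xi)$. Consequently $\xi \in \Div_\theta(a,b,q)$ iff $\xi + k \in \Div_\theta(a,b,q)$, so the bound obtained for $\Div_\theta(a,b,q) \cap [0,1]^m$ applies, with identical constants, to each translate $\Div_\theta(a,b,q) \cap (k + [0,1]^m)$. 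Covering $\R^m = \bigcup_{k \in \Z^m}(k + [0,1]^m)$ and invoking the countable stability of Hausdorff and packing dimensions then delivers the stated bounds on the full set $\Div_\theta(a,b,q)$.

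There is no real obstacle in this argument; the only step deserving explicit justification is the $\Z^m$-invariance just noted, since Theorem~\ref{main thm 2} is formulated for bounded fractal configurations. The triviality of the compatibility hypothesis, the existence of an equal-ratio IFS with limit set $[0,1]$, and the countable stability of $\dim_H$ and $\dim_P$ are standard.
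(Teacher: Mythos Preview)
Your approach is exactly the paper's: specialize Theorem~\ref{main thm 2} with $l=m$, $r_1=\cdots=r_m=1$, and $\Kcal_i=[0,1]$ for each $i$. The paper's one-line proof stops there and does not spell out the passage from $\Div_\theta(a,b,q)\cap[0,1]^m$ to the full set $\Div_\theta(a,b,q)$; your $\Z^m$-periodicity argument (noting that $v(k)\in u(\theta)\Z^d$ for $k\in\Z^m$, so the underlying affine lattice, and hence the dynamical condition defining $\Div_\theta$, is unchanged under $\xi\mapsto\xi+k$) together with countable stability of $\dim_H$ and $\dim_P$ is a correct and useful clarification of a step the paper leaves implicit.
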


\begin{cor}
\label{main thm 2''}
   Suppose \(a_1 = \cdots = a_m = 1/m\) and \(b_1 = \cdots = b_n = 1/n\). Then for any $\theta \in \Mat$ and $0 < q \leq 1$, we have
   \begin{align*}
\dim_H(\Div_\theta(a,b,q)) &\leq m(1-q) +  m\EMassl(\theta) , \\
\dim_P(\Div_\theta(a,b,q)) &\leq m(1-q) +  m\EMassu(\theta).
\end{align*}
In particular, we have
\begin{align*}
    \dim_H(\Sing_\theta(a,b)) &\leq  m\EMassl(\theta) , \\
\dim_P(\Sing_\theta(a,b)) &\leq  m\EMassu(\theta).
\end{align*}
\end{cor}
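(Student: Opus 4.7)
The plan is to derive Corollary~\ref{main thm 2''} as a direct specialization of Corollary~\ref{main thm 2'}, combined with the inclusion \eqref{eq: inclusion}. Under the equal-weight hypothesis $a_1 = \cdots = a_m = 1/m$, every $a_k$ appearing in the outer minimum equals $1/m$, so the $\min$ over $k$ is trivial and $\max\{a_i, a_k\} = 1/m$ for every pair $(i,k)$. A one-line substitution into the sum of Corollary~\ref{main thm 2'} then collapses the bound to
\[
\frac{1}{1/m}\sum_{i=1}^m\left(\frac{1}{m} - \frac{q}{m} + \frac{\EMassl(\theta)}{m}\right) = m(1-q) + m\EMassl(\theta),
\]
and identically with $\EMassu(\theta)$ for the packing dimension. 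Note that the weights $b_j$ do not appear in the bound; they enter only implicitly through the definitions of $g_t$ and $\widetilde{\X}$.

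For the ``in particular'' statement regarding $\Sing_\theta(a,b)$, I would invoke \eqref{eq: inclusion}: intersecting $\Sing(a,b) \subset \Div(a,b,1)$ with the fiber over a fixed $\theta$ yields $\Sing_\theta(a,b) \subset \Div_\theta(a,b,1)$. The desired estimates then follow by applying the first part of the corollary with $q = 1$, at which point the $m(1-q)$ term vanishes.

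Since the statement is a purely arithmetic consequence of the already-stated Corollary~\ref{main thm 2'} and the inclusion \eqref{eq: inclusion}, there is no substantive obstacle---the entire content has been absorbed into Theorem~\ref{main thm 2}. The only mild pitfall is bookkeeping: the factor $1/a_k = m$ out front multiplies rather than divides the sum, so one must be careful not to produce a spurious extra factor of $m$ when specializing.
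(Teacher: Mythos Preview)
Your proposal is correct and matches the paper's own proof essentially verbatim: the paper also derives the first part by specializing Corollary~\ref{main thm 2'} to the equal-weight case $a_1=\cdots=a_m=1/m$, and obtains the second part from the first via the inclusion \eqref{eq: inclusion} with $q=1$. Your arithmetic check and the remark about the factor $1/a_k=m$ are accurate.
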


\begin{cor}
\label{cor 1}
For all $\theta \in \Mat$ satisfying $\EMassu(\theta) = 0$, we have
\[
\dim_H(\Sing_\theta(a,b)) = \dim_P(\Sing_\theta(a,b)) = 0.
\]
\end{cor}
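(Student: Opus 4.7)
The plan is to deduce Corollary~\ref{cor 1} directly from Corollary~\ref{main thm 2'} together with the inclusion \eqref{eq: inclusion}, namely $\Sing_\theta(a,b)\subset\Div_\theta(a,b,1)$. So I first reduce to bounding $\dim_H$ and $\dim_P$ of $\Div_\theta(a,b,1)$, and then I apply Corollary~\ref{main thm 2'} with $q=1$.

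The key observation is to exploit the freedom to choose $k$ in the minimum appearing in Corollary~\ref{main thm 2'}. Taking $k=m$ (so that $a_k=a_m=\min_i a_i$) gives $\max\{a_i,a_k\}=a_i$ for every $1\le i\le m$, so the relevant sum collapses: with $q=1$, the terms $\max\{a_i,a_k\}-a_iq$ all vanish, leaving only the escape-of-mass contribution $a_i\EMassu(\theta)$ in the packing-dimension estimate and $a_i\EMassl(\theta)$ in the Hausdorff-dimension estimate. Using $\sum_{i=1}^m a_i=1$, I get
\[
\dim_P(\Sing_\theta(a,b))\le \frac{1}{a_m}\sum_{i=1}^m a_i\,\EMassu(\theta)=\frac{\EMassu(\theta)}{a_m},
\]
and similarly $\dim_H(\Sing_\theta(a,b))\le \EMassl(\theta)/a_m$.

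Finally, since by definition $\EMassl(\theta)\le\EMassu(\theta)$, the hypothesis $\EMassu(\theta)=0$ forces both quantities on the right to vanish, yielding $\dim_H(\Sing_\theta(a,b))=\dim_P(\Sing_\theta(a,b))=0$, as required. There is no real obstacle here; the proof is just a matter of selecting the optimal index $k=m$ in the minimum so that the sum telescopes at $q=1$. The only point worth flagging explicitly is that the inclusion $\Sing(a,b)\subset\Div(a,b,1)$ from \eqref{eq: inclusion} is what allows using Corollary~\ref{main thm 2'} with $q=1$ (rather than a strict inequality $q<1$, which would leave a residual term $1-q$).
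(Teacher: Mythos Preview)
Your proof is correct and follows essentially the same approach as the paper: both use the inclusion \eqref{eq: inclusion} to reduce to bounding $\dim_P(\Div_\theta(a,b,1))$, then apply Corollary~\ref{main thm 2'} with $q=1$ and the choice $k=m$ so that $\max\{a_i,a_k\}=a_i$ and the sum collapses to $\EMassu(\theta)/a_m=0$. The only cosmetic difference is that the paper bounds $\dim_H$ via the chain $\dim_H\le\dim_P$ rather than invoking the separate Hausdorff bound with $\EMassl(\theta)$.
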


\begin{cor}
\label{cor 2}
For all $\theta \in \Mat$ that are not $(a,b)$-singular-on-average matrices, i.e., $\theta \notin \Div^0(a,b,1)$, the set $\Sing_\theta(a,b)$ has zero Lebesgue measure. Moreover, its Hausdorff dimension is strictly less than $m$.
\end{cor}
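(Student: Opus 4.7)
The plan is to combine the inclusion $\Sing_\theta(a,b)\subset\Div_\theta(a,b,1)$ recorded in \eqref{eq: inclusion} with the dimension bound in Corollary~\ref{main thm 2'}, and to evaluate the minimum appearing there at the single natural choice $k=1$.

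First I would unwind what ``$\theta\notin\Div^0(a,b,1)$'' means. Taking $\xi=0$ in the definition of $\Div(a,b,q)$, the relevant orbit $(g_{e^t}u(\theta)\Z^d)_{t\ge 0}$ lies entirely in the homogeneous subspace $\X$, so $\widetilde\lambda_0$ along it reduces to $\lambda_0$, and therefore the condition $(\theta,0)\in\Div(a,b,1)$ is exactly $\EMassl(\theta)=1$. Hence the hypothesis $\theta\notin\Div^0(a,b,1)$ is equivalent to $\EMassl(\theta)<1$.

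Next I would specialise Corollary~\ref{main thm 2'} to $q=1$ and the index $k=1$. Since $a_1\ge a_i$ for all $i$, one has $\max\{a_i,a_1\}=a_1$, and using $a_1+\cdots+a_m=1$ the sum collapses:
\[
\frac{1}{a_1}\sum_{i=1}^{m}\bigl(a_1-a_i+a_i\EMassl(\theta)\bigr)=\frac{1}{a_1}\bigl(ma_1-1+\EMassl(\theta)\bigr)=m-\frac{1-\EMassl(\theta)}{a_1}.
\]
Combining this with the inclusion $\Sing_\theta(a,b)\subset\Div_\theta(a,b,1)$ gives
\[
\dim_H(\Sing_\theta(a,b))\le m-\frac{1-\EMassl(\theta)}{a_1}.
\]
Under the hypothesis $\EMassl(\theta)<1$, the right-hand side is strictly less than $m$, proving the dimension claim. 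Since $\Sing_\theta(a,b)\subset\R^m$ has Hausdorff dimension strictly less than the ambient dimension $m$, it automatically has Lebesgue measure zero, giving the measure claim.

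There is essentially no obstacle once Corollary~\ref{main thm 2'} is available; the only subtle point is the choice $k=1$, which is motivated by the fact that $a_1$ is the largest weight so that $\max\{a_i,a_1\}$ is constant in $i$, allowing the sum to simplify through $\sum_i a_i=1$. Any other $k$ would give a weaker bound (or the same one in the equal-weight case), so this choice is clearly optimal for the purposes at hand.
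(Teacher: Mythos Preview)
Your proof is correct and follows essentially the same approach as the paper: both use the inclusion \eqref{eq: inclusion}, invoke Corollary~\ref{main thm 2'} at $q=1$ with the index $k=1$, and simplify via $\sum_i a_i=1$ to obtain $\dim_H(\Sing_\theta(a,b))\le m-\tfrac{1-\EMassl(\theta)}{a_1}<m$, whence Lebesgue-nullity. One small quibble: strictly speaking $\widetilde\lambda_0$ vanishes identically on $\X$ rather than ``reducing to $\lambda_0$'', so your justification of $\theta\notin\Div^0(a,b,1)\Leftrightarrow\EMassl(\theta)<1$ is not literally right as stated; the paper itself simply asserts this implication, tacitly identifying $\Div^0(a,b,1)$ with the usual set of $(a,b)$-singular-on-average matrices defined via $\lambda_0$.
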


\begin{rem}
\label{rem: wide measures}
Note that for equal weights, i.e., $a_1 = \cdots = a_m = 1/m$ and $b_1 = \cdots = b_n = 1/n$, there is a wide class of measures, beyond the Lebesgue measure, for which the condition $\EMassu(\theta) = 0$ holds almost everywhere. Consequently, these measures give full measure to the set of non-$(a,b)$-singular-on-average matrices. Examples include natural measures on limit sets of IFSs as in \cite{SimmonsWeiss} (e.g., the middle-third Cantor set or the Koch snowflake), natural measures on non-degenerate curves in $\M_{m \times n}(\R)$ as in \cite{solanwieser}, and natural measures on various affine subspaces under certain conditions, as in \cite{ShahYang}. For a discussion of the unequal weight case, see \cite{ProhaskaSertShi}.
\end{rem}

\begin{rem}
For further results analogous to Corollary~\ref{cor 1}, we refer the reader to \cite[Theorem~3.8 and Corollary~3.9]{MoshchevitinNeckrasov}.
\end{rem}

\medskip

\subsection{Dimension bounds for fixed shift $\xi$}
We now explore the other singly metric case, where the shift \(\xi\) is fixed, and the matrix \(\theta\) varies. In this case, our main result is the following.

   \begin{thm}
    \label{main thm 1}
    Assume the notations as above. For $1 \leq l \leq d-1$, define $w_l$ as follows:
    $$
    w_l = 
    \begin{cases} 
        a_m + \cdots + a_{m+1-l}, & \text{if } l \leq m, \\
       b_n+ \cdots+ b_{l-m+1}, & \text{if } l \geq m. 
    \end{cases}
    $$

    For each $1 \leq i \leq m$ and $1 \leq j \leq n$, let $\Phi_{ij}$ be an iterated function system (IFS) consisting of contracting similarities on $\R$ with equal contraction ratios, satisfying the open set condition (see Section~\ref{sec: Iterated Function Systems} for more details). Let $\Kcal_{ij}$ be the limit set of $\Phi_{ij}$, and define 
    $$
    \Kcal = \{\theta \in \Mat : \theta_{ij} \in \Kcal_{ij} \text{ for all } i, j\}.
    $$

    Assume that $\dim_H(\Kcal_{ij})>0$ for all $i, j$. Then there exist constants $\eta_1, \ldots, \eta_{d-1} > 0$ (depending only on $\Kcal$) such that the following results hold for any $\xi \in \R^m$ :
    \begin{itemize}
        \item For any $0<q \leq 1$, the packing dimension of $\Div^\xi(a,b,q) \cap \Kcal$ satisfies:
        \begin{align}
        \label{eq: main thm 1}
          \dim_P(\Div^\xi(a,b,q) \cap \Kcal) \leq \dim_P(\Kcal) - \frac{q}{a_1 + b_1} \left( \min_{1 \leq l \leq d-1} \eta_l w_l \right).
        \end{align}
        \item For any $\omega > 0$, the packing dimension of $\Sing^\xi(a, b, \omega) \cap \Kcal$ satisfies:
        \begin{align}
        \label{eq: main thm 2}
        \dim_P(\Sing^\xi(a, b, \omega) \cap \Kcal) \leq \dim_P(\Kcal) - \frac{1}{a_1 + b_1} \left( \min_{1 \leq l \leq d-1} \eta_l w_l + \frac{\eta_1 a_m b_n \omega}{a_m + b_n + a_m \omega} \right).
        \end{align}
    \end{itemize}

    Moreover, the constants $\eta_1, \ldots, \eta_{d-1}$ can be explicitly chosen in the following cases:
    \begin{enumerate}
        \item If $\Kcal = \M_{m \times n}([0,1])$, we can take:
        \begin{align}
        \label{eq: main thm 3}
        \eta_l = 
        \begin{cases}
            \frac{m}{l}, & \text{if } l \leq m, \\
            \frac{n}{m+n-l}, & \text{if } l > m.
        \end{cases}
        \end{align}

        \item If $n = 1$, we can take:
       \begin{align}
        \label{eq: main thm 4}
        \eta_l = \frac{m}{l} \min_{1 \leq i \leq m} \dim_H(\Kcal_{i1}).
       \end{align}

        \item If $m = 1$, we can take:
        \begin{align}
        \label{eq: main thm 5}
        \eta_l = \frac{n}{1+n-l} \min_{1 \leq i \leq m} \dim_H(\Kcal_{i1}).
       \end{align}
    \end{enumerate}
\end{thm}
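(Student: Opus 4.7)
The plan is to follow the Margulis/height-function strategy described in the introduction, adapted to the novel feature that the "bad" behaviour here is accumulation of the trajectory near the homogeneous subspace $\X \subset \widetilde{\X}$ rather than escape to the cusp of $\widetilde{\X}$. Lemma~\ref{lem: Sing Dynamical Interpretation} and its averaged analogue (for $\Div^\xi(a,b,q)$) together with Lemma~\ref{lem: omega sing dynamical Interpretation} (for $\Sing^\xi(a,b,\omega)$) show that membership of $\theta$ in the set under study is controlled by how much time the orbit $t \mapsto g_t[u(\theta), v(\xi)] \widetilde{\Gamma}$ spends in the $\varepsilon$-neighbourhood of $\X$. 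The aim is to encode this as a divergence statement for a suitable height function $F: \widetilde{\X}\to[0,\infty)$ and then plug the outcome into the general dimension-estimation theorem of \cite[Thm.~6.5]{aggarwalghoshsingular}, recalled in Section~\ref{sec: Dimension Bound in Generalized Setup II}, with the product self-similar measure $\mu$ on $\Kcal$.

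The technical heart of the proof, and the step I expect to be the main obstacle, is the construction of this height function. Following Shi~\cite{Shi20}, to each rational subspace $V \subset \R^d$ of dimension $l \in \{1,\ldots,d-1\}$ one associates a covolume-type quantity measuring how close the affine lattice $[A,v]\widetilde{\Gamma}$ is to lying on a translate of $V$; these quantities become small precisely on approach to $\X$ along the $V$-direction and contract under $g_t$ at a rate governed by the weighted sum $w_l = a_m + \cdots + a_{m+1-l}$ (resp.\ $b_n+\cdots+b_{l-m+1}$). One then forms a weighted sum
\[
F_\beta(y) \;=\; \sum_{V}\, c_V\, \alpha_V(y)^{-\beta_V},
\]
with exponents $\beta_V$ tuned to the weights $(a_i,b_j)$, and must verify an integral contraction inequality of the form
\[
\int F_\beta\bigl(g_t[u(\theta),v(\xi)]\widetilde{\Gamma}\bigr)\, d\mu(\theta) \;\leq\; c\, e^{-\eta t}\, F_\beta(\cdot) + b
\]
uniformly in $\xi$. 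The proof of this inequality combines a Margulis-type averaging over the fractal measure $\mu$ (using the open set condition on each $\Phi_{ij}$ to run the induction factor-by-factor) with the weighted eigenvalue decomposition of $g_t$; the $\xi$-uniformity is where the construction genuinely differs from the homogeneous case of \cite{aggarwalghoshsingular}, because the cocycle term $v(\xi)$ interacts with the $V$-direction contributions and must be absorbed via a careful choice of the $c_V$.

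Once the contraction inequality is in place, the constants $\eta_1,\ldots,\eta_{d-1}$ are read off as the contraction exponents produced by the above averaging on each subspace-dimension stratum, and \eqref{eq: main thm 1} follows directly from \cite[Thm.~6.5]{aggarwalghoshsingular}: since $\theta\in\Div^\xi(a,b,q)\cap\Kcal$ forces the $F_\beta$-height along the orbit to grow at asymptotic rate at least $q\min_l \eta_l w_l$, the dimension-loss calculus of that theorem converts the rate into the packing-dimension deficit $q(\min_l \eta_l w_l)/(a_1+b_1)$. The $\omega$-singular bound \eqref{eq: main thm 2} is obtained by the same argument, with Lemma~\ref{lem: omega sing dynamical Interpretation} upgrading the averaged decay to a pointwise polynomial decay rate, which contributes the additional term $\eta_1 a_m b_n\omega/(a_m+b_n+a_m\omega)$ coming from the $l=1$ stratum. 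Finally, the explicit formulas \eqref{eq: main thm 3}, \eqref{eq: main thm 4}, \eqref{eq: main thm 5} are obtained by a direct bookkeeping of the $g_t$-eigenvalues on the relevant $V$'s: for $\Kcal=\M_{m\times n}([0,1])$ the Lebesgue integration yields $\eta_l = m/l$ or $n/(m+n-l)$, while for $n=1$ or $m=1$ the integration over each fractal factor replaces the trivial "$1$" by the local dimension of the self-similar measure, producing the factor $\min_i\dim_H(\Kcal_{i1})$.
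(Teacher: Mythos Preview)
Your high-level strategy matches the paper's: build a height function on $\widetilde{\X}$ that blows up exactly on $\X$, verify a $((f_\tau)_\tau,\beta)$-contraction hypothesis for the fractal measure $\mu$, and apply \cite[Thm.~6.5]{aggarwalghoshsingular}. However, the height function you sketch is not the one that works, and your description misidentifies which quantity detects approach to $\X$.

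You propose $F_\beta(y)=\sum_V c_V\,\alpha_V(y)^{-\beta_V}$, summing over rational subspaces $V$, with $\alpha_V$ measuring how close the \emph{affine} lattice is to a translate of $V$. The paper instead takes
\[
f_{\e,\hat\eta}(\widetilde{\Lambda}) \;=\; \e^{-2} \;+\; \e^{-1}\sum_{l=1}^{d-1}\tilde\varphi_l^{\eta_l}(\widetilde{\Lambda}) \;+\; \psi^{\eta_1}(\widetilde{\Lambda}),
\]
where $\psi(\widetilde{\Lambda})=(\min_{v\in\widetilde{\Lambda}}\|v\|)^{-1}$ is the \emph{single} affine term that detects $\X$, and the $\tilde\varphi_l=\varphi_l\circ\pi$ are the standard Minkowski covolume functions on the underlying \emph{homogeneous} lattice $\pi(\widetilde{\Lambda})$. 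The subspace sum lives only in the homogeneous piece; it does not see $\xi$ at all and does not detect $\X$. The $\xi$-uniformity is therefore not obtained by tuning the $c_V$ as you suggest: it is automatic, because the contraction hypothesis is verified for every basepoint $y\in\widetilde{\X}$ and $\xi$ enters only through the choice $y=[I_d,v(\xi)]\widetilde{\Gamma}$.

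The reason for this two-layer architecture is that the contraction for $\psi^{\eta_1}$ alone leaves a remainder $\xi(t)\,\tilde\varphi_1^{\eta_1}$ (Proposition~\ref{prop: Contraction psi}), which must be absorbed by the homogeneous part. The contraction for $\tilde\varphi_l^{\eta_l}$ in turn produces remainders $(\tilde\varphi_{l-j}\tilde\varphi_{l+j})^{\eta_l/2}$, and closing this cascade forces the convexity constraint $1/\eta_{i-j}+1/\eta_{i+j}\le 2/\eta_i$ together with the upper bound $\eta_l<\zeta_l(\mu)$, where $\zeta_l(\mu)$ is the $l$-th critical exponent of $\mu$ (defined via integrability of $\|\pi_{l+}(u(\theta)v)\|^{-\gamma}$). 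The constants $\eta_l$ in the theorem are precisely those admissible for this pair of constraints; their existence uses Proposition~\ref{Critical Exponent is positive}, and the explicit choices \eqref{eq: main thm 3}--\eqref{eq: main thm 5} come from the explicit lower bounds on $\zeta_l(\mu)$ in those cases, not from a direct eigenvalue count as you suggest. There is also a limiting step (Proposition~\ref{prop:Genral Estimate}, Case~2) that perturbs the $\eta_i$ to strict inequalities and passes to the limit, which your sketch omits.
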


\begin{rem}
\label{rem: Enough for irrational}
Since Theorem~\ref{main thm 1} is already established for $\xi \in \Z^m$ in \cite{aggarwalghoshsingular}, we will assume throughout the proof that $\xi \in \R^m \setminus \Z^m$.
\end{rem}

\medskip


\section{Notation I}
\label{sec: Notation I}
The following notation will be used throughout the paper.

\subsection{Hausdorff and Packing Dimensions}
The $i$-dimensional Hausdorff measure of a set $F \subset \R^l$ is defined as
\begin{align*}
    \Hcal^i(F)= \sup_{\e>0} \inf\left\{\sum_{j=1}^\infty (\diam(U_j))^i: \substack{(U_j)_{j=1}^\infty \text{ is a countable cover of } F \\ \text{ with $\diam(U_j) \leq \e$ for all $j$. }}  \right\}
\end{align*}
The Hausdorff dimension of a set $F \subset \R^l$ is defined as
\begin{align*}
    \dim_H(F)= \inf\{i: \Hcal^i(F)=0\}= \sup\{i:  \Hcal^i(F)= \infty\}.
\end{align*}
The $i$-dimensional packing measure of a set $F \subset \R^l$ is defined as 
\begin{align*}
    \Pcal^i(F) := \inf \left\{\sum_{j= 1}^\infty \tPcal^i(F_j):  F \subset \bigcup_{j=1}^\infty  F_j \right\}, 
\end{align*}
where 
\begin{align*}
    \tPcal^i(F) = \inf_{\e>0} \sup \left\{ \sum_{j=1}^\infty (\diam(B_j))^i : \substack{(B_j)_{j=1}^\infty  \text{ is a countable collection of disjoint balls } \\ \text{ with centers in $F$ and with $\diam(B_j) < \e$ for all $j$.}} \right\}
\end{align*}
The packing dimension of a set $F \subset \R^l$ is defined as
\begin{align*}
    \dim_P(F)= \inf\{i: \Pcal^i(F)=0\}= \sup\{i:  \Pcal^i(F)= \infty\}.
\end{align*}

We will need the following important lemma.
\begin{lem}[{\cite[Prop.~3.4 and Lem.~3.8]{Falconer}}]
\label{lem: Falconer}
    For $F$ a non-empty bounded subset of $\R^l$, we have
    \begin{align*}
        \dim_H(F) &\leq \liminf_{\delta \rightarrow 0} \frac{\log C_{\delta}(F)}{-\log \delta}, \\
         \dim_P(F) &\leq \limsup_{\delta \rightarrow 0} \frac{\log C_{\delta}(F)}{-\log \delta},
    \end{align*}
    where $C_\delta(F)$ denotes the smallest number of sets of diameter at most $\delta$ that cover $F$. In particular,
    $$
    \dim_H(F) \leq \dim_P(F).
    $$
\end{lem}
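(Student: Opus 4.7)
The plan is to derive each of the two inequalities directly from the definitions, using $C_\delta(F)$ as the basic combinatorial quantity, and then to deduce the closing assertion $\dim_H(F)\le\dim_P(F)$ as a separate soft step. The Hausdorff half is routine; the packing half requires a short combinatorial lemma relating covers and packings in $\R^l$.

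For the Hausdorff bound, set $s_1=\liminf_{\delta\to 0}\log C_\delta(F)/(-\log\delta)$ and fix $t>s_1$. By the definition of $\liminf$ there is a sequence $\delta_n\downarrow 0$ and an exponent $t'\in(s_1,t)$ with $C_{\delta_n}(F)\le\delta_n^{-t'}$ for all large $n$. Taking an optimal cover of $F$ by $C_{\delta_n}(F)$ sets of diameter at most $\delta_n$ gives $\Hcal^t_{\delta_n}(F)\le C_{\delta_n}(F)\,\delta_n^t\le\delta_n^{t-t'}\to 0$, so $\Hcal^t(F)=0$ and $\dim_H(F)\le t$; letting $t\downarrow s_1$ concludes.

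For the packing bound, set $s_2=\limsup_{\delta\to 0}\log C_\delta(F)/(-\log\delta)$ and fix $t>s_2$, so that $C_\delta(F)\le\delta^{-t'}$ for some $t'\in(s_2,t)$ and all $\delta$ below some threshold. Since any set of diameter at most $\delta$ lies inside a ball of radius $\delta$, the packing number $P_\delta(F)$, defined as the maximum number of pairwise disjoint closed balls of diameter $\delta$ centered in $F$, satisfies $P_\delta(F)\le c\,C_\delta(F)$ for a constant $c=c(l)$ by a standard volume-doubling argument in $\R^l$. Given any disjoint family of balls $(B_j)$ centered in $F$ with diameters strictly less than $\epsilon$, I group them dyadically according to $2^{-(k+1)}\le\diam(B_j)<2^{-k}$: the group at level $k$ has at most $P_{2^{-k}}(F)\le c\cdot 2^{kt'}$ members, and so
\[
\sum_j (\diam(B_j))^t \;\le\; c\sum_{k\ge K} 2^{-k(t-t')},
\]
where $2^{-K}\ge\epsilon$. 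Since $t-t'>0$, this tail tends to $0$ as $\epsilon\downarrow 0$, giving $\tPcal^t(F)=0$; applying the trivial decomposition $F=F$ then yields $\Pcal^t(F)=0$, hence $\dim_P(F)\le t$, and letting $t\downarrow s_2$ completes this half.

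The concluding inequality $\dim_H(F)\le\dim_P(F)$ then follows from countable stability of Hausdorff dimension: for every countable decomposition $F=\bigcup_j F_j$ one has $\dim_H(F)=\sup_j\dim_H(F_j)\le \sup_j\limsup_{\delta\to 0}\log C_\delta(F_j)/(-\log\delta)$ by the first bound applied to each piece, and infimizing over such decompositions produces exactly the modified upper box-counting characterization of $\dim_P(F)$. The only delicate point in the whole argument is the packing step: one must be careful that the dyadic grouping produces a geometric series with ratio $2^{-(t-t')}<1$ whose starting index $K$ is driven to infinity by the condition $\diam(B_j)<\epsilon\le 2^{-K}$, so that the premeasure $\tPcal^t(F)$ is genuinely $0$ rather than merely finite. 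The remaining manipulations are straightforward applications of the definitions.
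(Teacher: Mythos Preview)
The paper does not supply its own proof of this lemma; it simply cites \cite[Prop.~3.4 and Lem.~3.8]{Falconer} and uses the result as a black box. Your write-up is essentially the standard argument found in Falconer: the Hausdorff inequality follows from an optimal $\delta$-cover, the packing inequality from a dyadic grouping that controls the packing premeasure by covering numbers, and the concluding inequality $\dim_H\le\dim_P$ from countable stability of Hausdorff dimension combined with the modified upper box-dimension characterisation of $\dim_P$ (which is precisely Falconer's Prop.~3.8).

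Two small comments. First, the volume-doubling step is unnecessary: if closed balls of diameter $\delta$ centred in $F$ are pairwise disjoint then their centres are at mutual distance strictly greater than $\delta$, so no two centres can lie in a single covering set of diameter at most $\delta$; hence $P_\delta(F)\le C_\delta(F)$ with constant $1$. Second, in the final paragraph you invoke the \emph{equality} $\dim_P(F)=\inf\{\sup_j\overline{\dim_B}(F_j):F\subset\bigcup_jF_j\}$, not merely the upper bound established in your packing step; this is legitimate since it is exactly the content of the cited Prop.~3.8, but it is worth flagging that this is an additional input rather than a consequence of what you proved above.
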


\subsection{Iterated Function Systems}
\label{sec: Iterated Function Systems}
    A contracting similarity is a map $\R^l \rightarrow \R^l$ of the form $x \mapsto c O x+ y$ where $c \in (0,1)$, $y \in \R^l$ and $O$ is a $l \times l$ special orthogonal matrix. A \emph{finite similarity Iterated Function System} with constant ratio ({\em IFS}) on $\R^l$ is a collection of contracting similarities $\Phi= (\phi_e: \R^l \rightarrow \R^l)_{e \in E}$ indexed by a finite set $E$, called the alphabet, such that there exists a constant $c \in (0,1)$ independent of $e$ so that
$$
\phi_e(x)= cO_e x + w_e,
$$
for all $e \in E$.

Let $B = E^\N$. The coding map of an IFS $\Phi$ is the map $\sigma: B \rightarrow \R^l$ defined by the formula
\begin{align}
\label{eq:def sigma}
    \sigma(b)= \lim_{j \rightarrow \infty} \phi_{b_1} \circ \cdots \circ \phi_{b_j}(0).
\end{align}
It is well known that the limit in $\eqref{eq:def sigma}$ exists and that the coding map is continuous. The image of $B$ under the coding map called the limit set of $\Phi$, is a compact subset of $\R^l$, which we denote by $\Kcal= \Kcal(\Phi)$. For details, we refer the reader to \cite[Chap.~9]{Falconer}.

We define for $\te=(e_1, \ldots, e_j) \in E^j$, 
\begin{align}
    \label{eq: def K w, collection}
    \Kcal_\te= \phi_{e_1} \circ \cdots \circ \phi_{e_j}(\Kcal), \quad \text{ and set } \Fcal(j)= \{\Kcal_{\te} : \te \in E^j\}. 
\end{align}

We will say that $\Phi$ satisfies the \emph{open set condition} (OSC for short) if there exists a non-empty open subset  $U \subset \R^l$ such that the following holds
    \begin{align*}
        \phi_e(U) \subset U &\text{ for every } e \in E \\
        \phi_e(U) \cap \phi_{e'}(U) = \emptyset, &\text{ for every } e \neq e' \in E.
    \end{align*}

Let $\Prob(E)$ denote the space of probability measures on $E$. For each $\nu \in \Prob(E)$ we can consider the measure $\sigma_* \nu^{\otimes \N}$ under the coding map. A measure of the form $\sigma_* \nu^{\otimes \N}$ is called a \emph{Bernoulli measure}. 

The following proposition is well known (see, for e.g. \cite[Prop.~5.1(4), Thm.~5.3(1)]{Hutchinson},  for a proof).

\begin{prop}
    \label{prop: important IFS}
    Suppose $\Phi= \{\phi_e: e \in E\}$ is an IFS satisfying the open set condition with the limit set $\Kcal$. Let $c$ denote the common contraction ratio of $(\phi_e)_{e \in E}$ and $p= \#E$. Then the Hausdorff and packing dimension of $\Kcal$ both equal $s:=-\log p/ \log c$. Also, the $s$-dimensional Hausdorff measure $\Hcal^s$ satisfies $0< \Hcal^s(\Kcal)< \infty$. Moreover if $\mu$ denotes the normalised restriction of $\Hcal^s$ to $\Kcal$, then $\mu$ is a Bernoulli measure and equals $\sigma_* \nu^{\otimes \N}$, where $\nu$ is the uniform measure on $E$, i.e, $\nu(F)= \#F/\#E$ for all $F \subset E$. Additionally, for every $j \in \N$ and distinct sequences $\te_1 \neq \te_2 \in E^j$, we have $\mu(\Kcal_{\te_1} \cap \Kcal_{\te_2}) = 0$. Furthermore, there exists a constant $\lambda >0$ such that for all $x \in \R^l$ and $y>0$, we have
    \begin{align}
        \label{eq: imp IFS}
        \mu(B(x,y)) \leq \lambda y^s.
    \end{align}
\end{prop}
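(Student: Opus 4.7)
The plan is to follow Hutchinson's classical strategy in four linked steps: first bound $\dim_P(\Kcal)$ and $\dim_H(\Kcal)$ above by $s$ via the natural cover $\Fcal(j)$; then construct the Bernoulli measure $\mu := \sigma_* \nu^{\otimes \N}$ as a candidate for the normalized Hausdorff measure; then use the open set condition to prove the ball estimate \eqref{eq: imp IFS}; and finally invoke the mass distribution principle to obtain the matching lower bound and identify $\mu$ with the normalized restriction of $\Hcal^s$. For the upper bound, each composition $\phi_{e_1} \circ \cdots \circ \phi_{e_j}$ is a similarity of ratio $c^j$, so $\Fcal(j)$ consists of $p^j$ sets of diameter $c^j \diam(\Kcal)$. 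The choice $s = -\log p/\log c$ makes $pc^s = 1$, hence
$$ \sum_{\te \in E^j} \diam(\Kcal_{\te})^s = \diam(\Kcal)^s $$
for every $j$. Letting $j \to \infty$ yields $\Hcal^s(\Kcal) \le \diam(\Kcal)^s < \infty$, so $\dim_H(\Kcal) \le s$; the same family also shows the upper box dimension is at most $s$, and Lemma~\ref{lem: Falconer} upgrades this to $\dim_P(\Kcal) \le s$.

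The crux of the proof, and its main obstacle, is the ball estimate \eqref{eq: imp IFS}. Let $\nu$ denote the uniform measure on $E$ and set $\mu := \sigma_* \nu^{\otimes \N}$; this is a probability measure supported on $\Kcal$, and inspection of the Bernoulli cylinders shows $\mu(\Kcal_{\te}) \ge p^{-j}$ for every $\te \in E^j$. Here the OSC is decisive: fix $U \subset \R^l$ open with $\phi_e(U) \subset U$ and $\phi_e(U) \cap \phi_{e'}(U) = \emptyset$ for $e \ne e'$; after shrinking $U$ one may assume $B(x_0, r_1) \subset U \subset B(x_0, r_2)$ for some $x_0$ and $0 < r_1 < r_2$. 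Iteration produces, for each $j$, a pairwise disjoint family $\{\phi_{\te}(U)\}_{\te \in E^j}$ in which each member contains a ball of radius $c^j r_1$ and lies inside a ball of radius $c^j r_2$. Given $x \in \R^l$ and $y > 0$, choose $j$ with $c^j r_2 < y \le c^{j-1} r_2$. A volume-packing argument in $\R^l$ bounds the number of $\te \in E^j$ for which $\Kcal_{\te}$ meets $B(x,y)$ by a constant $N = N(r_1/r_2, l)$, since that many disjoint balls of radius $c^j r_1$ must fit inside a ball of radius comparable to $y$. Each such $\Kcal_{\te}$ carries $\mu$-mass $p^{-j} = c^{js}$, which is comparable to $y^s$; this yields \eqref{eq: imp IFS}. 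The delicate ingredient is producing $N$ uniformly in $x, y, j$, which fails completely without the OSC.

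Given \eqref{eq: imp IFS}, the mass distribution principle delivers $\Hcal^s(\Kcal) \ge \mu(\Kcal)/\lambda = 1/\lambda > 0$, whence $\dim_H(\Kcal) \ge s$ and both dimensions equal $s$. Since $0 < \Hcal^s(\Kcal) < \infty$, the normalized restriction $\mu' := \Hcal^s|_\Kcal / \Hcal^s(\Kcal)$ is a probability measure on $\Kcal$; by self-similarity of $\Hcal^s$ under similarities it satisfies Hutchinson's fixed-point equation $\mu' = p^{-1} \sum_{e \in E} (\phi_e)_* \mu'$, which $\mu$ also satisfies. Uniqueness of this fixed point in $\Prob(\R^l)$ (contractivity in the Hutchinson metric) forces $\mu' = \mu$, identifying the Bernoulli measure with the normalized Hausdorff measure. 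The final claim $\mu(\Kcal_{\te_1} \cap \Kcal_{\te_2}) = 0$ for $\te_1 \ne \te_2 \in E^j$ is then a byproduct: the inclusion $\Kcal_{\te_i} \subset \overline{\phi_{\te_i}(U)}$ together with disjointness of the open cores places $\Kcal_{\te_1} \cap \Kcal_{\te_2}$ on the boundaries, while summing $\mu(\Kcal_{\te}) \ge p^{-j}$ over the $p^j$ cylinders of level $j$ already exhausts the total mass $\mu(\Kcal) = 1$, so the pairwise overlaps must be $\mu$-null.
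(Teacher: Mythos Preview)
The paper does not supply its own proof of this proposition; it merely cites Hutchinson \cite[Prop.~5.1(4), Thm.~5.3(1)]{Hutchinson} and treats the result as well known. Your sketch is exactly Hutchinson's classical argument (natural covers for the upper bound, OSC volume-packing for the ball estimate, mass distribution principle for the lower bound, and uniqueness of the invariant measure to identify $\mu$ with the normalized $\Hcal^s$), so it matches the paper's intended approach by reference and is correct.
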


We will also need the following lemma.
\begin{lem}
\label{lem: finite intersect}
    Suppose \(\Phi = \{\phi_e: e \in E\}\) is an IFS satisfying the open set condition with limit set \(\Kcal \subset \mathbb{R}^l\). Assume that \(\dim_H(\Kcal) > 0\). Let $\alpha$ denote the diameter of $\Kcal$ and let \(c\) denote the common contraction ratio of \((\phi_e)_{e \in E}\). Then there exists \(L \in \mathbb{N}\) such that the following holds: For every ball \(B \subset \mathbb{R}^l\) of radius \(\beta > 0\), there exist at most \(L\) elements in \(\Fcal(k_\beta)\) that intersect \(B\), where \(k_\beta\) is the unique integer such that 
    \begin{align}
        \label{eq: lem finite intersect}
        c^{k_{\beta}+1} \alpha < \beta \leq c^{k_{\beta}} \alpha,
    \end{align}
    and \(\Fcal(\cdot)\) is defined as in \eqref{eq: def K w, collection}. 
\end{lem}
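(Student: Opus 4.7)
The plan is to exploit the Bernoulli measure $\mu$ from Proposition~\ref{prop: important IFS}---the normalised $s$-dimensional Hausdorff measure on $\Kcal$, where $s = \dim_H(\Kcal) = -\log p / \log c > 0$---to perform a measure-packing argument in place of the more familiar volume argument. Three facts will be central: (i) for each $\te \in E^{k_\beta}$, self-similarity together with the relation $c^s = 1/p$ gives $\mu(\Kcal_\te) = c^{s k_\beta} = p^{-k_\beta}$, since $\phi_\te$ scales the $s$-dimensional Hausdorff measure by $c^{s k_\beta}$; (ii) distinct $\Kcal_\te$'s in the same generation have $\mu$-null intersection, again by Proposition~\ref{prop: important IFS}; and (iii) each $\Kcal_\te$ has diameter $c^{k_\beta} \alpha$.

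Let $B = B(x_0, \beta)$. By fact~(iii), if $\Kcal_\te \cap B \neq \emptyset$ then $\Kcal_\te \subset B(x_0,\, \beta + c^{k_\beta} \alpha)$, and hence the union of all such $\Kcal_\te$ lies in the same enlarged ball. Writing $N$ for the number of these sets, combining facts~(i), (ii), and the Frostman-type bound~\eqref{eq: imp IFS} gives
\[
N\, p^{-k_\beta} \;=\; \sum_{\te \,:\, \Kcal_\te \cap B \neq \emptyset} \mu(\Kcal_\te) \;\le\; \mu\bigl(B(x_0,\, \beta + c^{k_\beta}\alpha)\bigr) \;\le\; \lambda\,(\beta + c^{k_\beta}\alpha)^s.
\]

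It remains to eliminate the dependence on $\beta$ and $k_\beta$ using \eqref{eq: lem finite intersect}. The inequality $c^{k_\beta+1}\alpha < \beta$ yields $c^{k_\beta} \alpha < \beta/c$, so the right-hand side above is at most $\lambda\, \beta^s (1 + 1/c)^s$. The inequality $\beta \le c^{k_\beta} \alpha$, combined with $c^s = 1/p$, gives $\beta^s \le c^{s k_\beta} \alpha^s = p^{-k_\beta} \alpha^s$. Substituting and cancelling the common factor $p^{-k_\beta}$ produces $N \le \lambda\, \alpha^s (1 + 1/c)^s$, a quantity depending only on the IFS; we then take $L$ to be the ceiling of this constant. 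The hypothesis $\dim_H(\Kcal) > 0$ is used only to guarantee $s > 0$, which simultaneously ensures $\alpha > 0$ (so that $k_\beta$ is well defined by \eqref{eq: lem finite intersect}) and furnishes a non-trivial Frostman exponent in~\eqref{eq: imp IFS}; beyond this, the argument is a direct measure-packing computation and I do not anticipate any serious obstacle.
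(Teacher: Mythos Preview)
Your proposal is correct and follows essentially the same measure-packing argument as the paper: enlarge the ball to contain every intersecting cylinder, apply the Frostman bound~\eqref{eq: imp IFS}, and use~\eqref{eq: lem finite intersect} to cancel the scale dependence. The only cosmetic differences are that the paper enlarges to radius $2c^{-1}\beta$ rather than your $\beta(1+1/c)$ and arrives at the constant $\lambda(2c^{-1}\alpha)^s$ rather than your slightly sharper $\lambda\alpha^s(1+1/c)^s$.
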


\begin{proof}
    Suppose the Hausdorff dimension of \(\Kcal\) equals \(s\) and \(\mu\) is the normalized restriction of \(\Hcal^s\) to \(\Kcal\). Let \(\lambda > 0\) be such that \eqref{eq: imp IFS} holds for \(\mu\). Fix \(L \in \mathbb{N}\) such that \(L > \lambda (2c^{-1} \alpha )^s\). Suppose \(B\) is a ball of radius \(\beta >0\) and center \(x\). Let \(k_\beta\) be defined as above.  Define \(\tilde{B}\) as a ball of size \(2c^{-1} \beta\) with center \(x\). Then, for all \(R \in \Fcal(k_\beta)\) such that \(R \cap B \neq \emptyset\), we have \(R \subset \tilde{B}\). To see this, let \(y \in R \cap B\) be arbitrary. Then, for all \(z \in R\), we have 
    \begin{align*}
        \|x - z\| &\leq \|x - y\| + \|y - z\| \leq \beta + c^{k_{\beta}} \alpha \\
        &\leq \beta + c^{-1} \beta < 2c^{-1} \beta,
    \end{align*}
    which immediately implies \(R \subset \tilde{B}\). Thus we have
    \begin{align*}
        \#\{R \in \Fcal(k_\beta): R \cap B \neq \emptyset\} &\leq \frac{1}{c^{k_\beta s}} \sum_{\substack{R \in \Fcal(k_\beta) \\ R \cap B \neq \emptyset}} \mu(R) \leq \frac{1}{c^{k_\beta s}} \mu(\tilde{B}) \\
        &\leq \frac{1}{c^{k_\beta s}} \lambda (2c^{-1} \beta )^s \quad \text{ using \eqref{eq: imp IFS}}\\
        &\leq \lambda (2c^{-1} \alpha)^s \quad \text{ using \eqref{eq: lem finite intersect}} \\
        &< L.
    \end{align*}
    This proves the claim.
\end{proof}


\section{Dimension bound in Generalized Setup I}
\label{sec: Dimension bound in Generalized Setup I}
\begin{defn}
   With a slight abuse of notation, for $x \in \widetilde{\X}$, we define 
   \begin{align*}
       \EMassl(x) &= \lim_{\e \rightarrow 0} \liminf_{T \rightarrow \infty} \frac{1}{T} m_{\R} (\{ t \in [0,T]: \lambda_0(\pi(g_{e^T} x)) \leq \e   \}) , \\
       \EMassu(x) &= \lim_{\e \rightarrow 0} \limsup_{T \rightarrow \infty} \frac{1}{T} m_{\R} (\{ t \in [0,T]: \lambda_0(\pi(g_{e^T} x)) \leq \e   \}).
   \end{align*}
   Recall that $\lambda_0(y)$ denotes the length of the shortest non-zero vector in $y$. The significance of $\EMassl(x)$ and $\EMassu(x)$ is that any subsequential limit of measures $\frac{1}{T} \int_0^T \delta_{g_{e^t}u(\theta) x}$, say $\mu_x$ satisfies $1-\EMassu(x) \leq \mu_x(\widetilde{\X}) \leq 1-\EMassl(x)$.
\end{defn}

\begin{lem}
\label{lem: Emass}
    For any $x \in \Tilde{X}$, let us define for $t>1$ and $\e>0$, the sets
     \begin{align*}
        I(t, \e)&= \{k \in \N: \lambda_0(\pi(g_t^kx)) \leq \e\} , \\
        I(t, \e, N) &= I(t, \e) \cap [1, N].
    \end{align*}
    Then for any $t>1$, we have
    \begin{align*}
        \lim_{\e \rightarrow 0} \liminf_{N \rightarrow \infty} \frac{1}{N} \#I(t, \e, N) &= \EMassl(x) \\
        \lim_{\e \rightarrow 0} \limsup_{N \rightarrow \infty} \frac{1}{N} \#I(t, \e, N) &= \EMassu(x) \\
    \end{align*}
\end{lem}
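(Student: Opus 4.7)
The plan is to relate the discrete-time and continuous-time averages via a standard discretization argument based on the bi-Lipschitz behavior of $\lambda_0$ along the flow. Writing $h_s := g_{e^s}$, the semigroup property gives $g_t^k = h_{k\log t}$, so the discrete samples $\lambda_0(\pi(g_t^k x))$ are the values of the continuous function $s \mapsto \lambda_0(\pi(h_s x))$ at the grid points $s = k\log t$. The first step is to show this continuous function cannot drop below $\e$ on a small interval without being comparable to $\e$ at the grid point: since $h_s$ is diagonal with entries $e^{sa_i}, e^{-sb_j}$, setting $C := t^{\max(a_1,b_1)}$, for every $y \in \X$ and every $|u| \leq \log t$ one has
\[
C^{-1}\lambda_0(y) \;\leq\; \lambda_0(h_u y) \;\leq\; C\lambda_0(y).
\]

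Using this, I would establish two sandwich inequalities. First, if $k \in I(t,\e,N)$, then the Lipschitz bound propagates to give $\lambda_0(\pi(h_s x)) \leq C\e$ for every $s$ in the interval $J_k := [(k-\tfrac12)\log t,(k+\tfrac12)\log t]$. Since the $J_k$ are pairwise disjoint and, up to a boundary term of size $O(\log t)$, contained in $[0,(N+1)\log t]$, this yields
\[
(\log t) \cdot \#I(t,\e,N) \;\leq\; m_\R\bigl(\{s \in [0,(N+1)\log t] : \lambda_0(\pi(h_s x)) \leq C\e\}\bigr) + \log t.
\]
Conversely, if some $s \in [(k-1)\log t, k\log t]$ satisfies $\lambda_0(\pi(h_s x)) \leq \e$, then evaluating the Lipschitz bound at $k\log t$ shows $k \in I(t,C\e,N)$; summing over those $k$ for which the small-$\lambda_0$ set meets the $k$-th interval gives
\[
m_\R\bigl(\{s \in [0,N\log t] : \lambda_0(\pi(h_s x)) \leq \e\}\bigr) \;\leq\; (\log t) \cdot \#I(t,C\e,N).
\]

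Finally, I would divide both inequalities by $N\log t$, apply $\liminf_{N\to\infty}$ (respectively $\limsup_{N\to\infty}$), and then let $\e \to 0$. Because $t$ is fixed throughout, the constant $C = t^{\max(a_1,b_1)}$ is a fixed finite number independent of $\e$, so the substitution $\e \mapsto C\e$ is harmless in the outer limit $\lim_{\e \to 0}$. Both sandwiches therefore collapse onto $\EMassl(x)$ (respectively $\EMassu(x)$), giving the claimed equalities. The only real delicacy is book-keeping the order of limits: the Lipschitz constant $C$ must be declared before $\e$ is sent to zero, and the additive boundary term $O(\log t)$ must vanish after dividing by $N$; neither point is substantive, and I expect the entire argument to be essentially a few lines once the Lipschitz estimate for $\lambda_0$ along $h_s$ is recorded.
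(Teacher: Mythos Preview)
Your proposal is correct and follows essentially the same strategy as the paper: both arguments use the bi-Lipschitz behaviour of $\lambda_0$ under $g_s$ for $s$ in a bounded window to sandwich the continuous-time average between discrete counts at scales $\e$ and $C\e$ (the paper packages this as the inclusions $K_{\e t^{-1}} \subset K_\e' $ and $K_\e'' \subset K_{t\e}$, with $C=t$ rather than your sharper $C=t^{\max(a_1,b_1)}$), and then send $\e\to 0$. The only cosmetic differences are that the paper partitions $[0,T]$ into intervals $[(j-1)\log t,\,j\log t]$ rather than your centred intervals $J_k$, and it passes from continuous $T$ to the integer $N_T=\lfloor T/\log t\rfloor$ rather than from discrete $N$ to $T=N\log t$; the book-keeping you flag about matching the subsequential $\liminf/\limsup$ to the full one is handled identically in both directions.
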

\begin{proof}
    Fix $t>1$. For $\e>0$, let us define 
    \begin{align*}
        K_\e&= \{y \in \widetilde{\X}: \lambda_0(\pi(y)) \leq \e\}, \\
        {K}_{\e}' &= \bigcap_{s=t^{-1}}^t g_sK_\e \supset K_{\e t^{-1}} ,\\
        K_\e ''& = \bigcup_{s=t^{-1}}^t g_sK_\e \subset K_{\e t}.
    \end{align*}
    
    For $T>0$, define $N_T \in \N$ such that $t^{N_T} \leq e^{T}< t^{N_T+1}$. Then for any $\e>0$, it is easy to see that
    \begin{align}
        \frac{1}{T} \int_0^{T} \delta_{g_{e^s}x}(K_\e) \, ds \nonumber &= \frac{1}{T} \left( \sum_{j=1}^{N_T} \int_{-\log t}^0 \delta_{g_{t}^j g_{e^s} x} (K_\e) \, ds + \int_{N_T\log t}^{T} \delta_{ g_{e^s} x} (K_\e) \, ds \right) \nonumber \\
        &\leq \frac{1}{N_T\log t} \left(   \sum_{j=1}^{N_T}  \delta_{g_{t}^j  x} (K_\e'') \log t   + (T- N_T\log t) \right) \nonumber \\
        &\leq \frac{1}{N_T }\left( \#I(t, t{\e}, N_T) + 1 \right), \label{eq: abbc 1}
    \end{align}
    and
    \begin{align}
        \frac{1}{T} \int_0^{T} \delta_{g_{e^s}x}(K_\e) \, ds \nonumber &= \frac{1}{T} \left( \sum_{j=1}^{N_T} \int_{-\log t}^0 \delta_{g_{t}^j g_{e^s} x} (K_\e) \, ds + \int_{N_T\log t}^{T} \delta_{ g_{e^s} x} (K_\e) \, ds \right) \nonumber \\
        &\geq \frac{1}{(N_T+1)\log t} \left( \sum_{j=1}^{N_T}  \delta_{g_{t}^j  x} (K_\e') \log t  \right) \nonumber \\
        &\geq \frac{1}{N_T+1 }\left( \#I(t, \e t^{-1}, N_T)  \right). \label{eq: abbc 2}
    \end{align}
    From equations \eqref{eq: abbc 1} and \eqref{eq: abbc 2}, we get that
    \begin{align}
         \liminf_{T \rightarrow \infty}\frac{1}{N_T+1 }\left( \#I(t, \e t^{-1}, N_T) \right) \leq  \liminf_{T \rightarrow \infty} \frac{1}{T} \int_0^{T} \delta_{g_{e^s}x}(K_\e) \, ds \leq \liminf_{T \rightarrow \infty} \frac{1}{N_T }\left( \#I(t, t{\e}, N_T) + 1 \right), \label{eq: abbc 3}\\
         \limsup_{T \rightarrow \infty}\frac{1}{N_T+1 }\left( \#I(t, \e t^{-1}, N_T)  \right) \leq  \limsup_{T \rightarrow \infty} \frac{1}{T} \int_0^{T} \delta_{g_{e^s}x}(K_\e) \, ds \leq \limsup_{T \rightarrow \infty} \frac{1}{N_T }\left( \#I(t, t{\e}, N_T) + 1 \right). \label{eq: abbc 4}
    \end{align}
    The lemma now follows by taking limit as $\e \rightarrow 0$ in \eqref{eq: abbc 3} and \eqref{eq: abbc 4}, and noting that the sequence $\{N_T= \lfloor T/\log t \rfloor: T \geq \log t\}=\N$. Hence proved.
\end{proof}

\begin{defn}
    Given $x \in \X$ and $0 < q \leq 1 $, we define $\Divergent(x,q,a,b) \subset \R^m$ as the set of all $\xi \in \R^m$ such that 
    \begin{align*}
        \lim_{\e \rightarrow 0} \liminf_{T \rightarrow \infty} \frac{1}{T} m_{\R} ( \{ t \in [0,T]: \widetilde{\lambda}_0({g_{e^t}[I_d,v(\xi)] x}) \leq \e \} \geq q.\\
    \end{align*}
    Recall that $\widetilde{\lambda}_0(y)$ denotes the length of the shortest vector in the lattice $y$.
\end{defn}

\begin{thm}
\label{gen thm 2}
   Let $r_1, \ldots, r_l \in \N$ satisfy $r_1 + \cdots + r_l = m$, and suppose that $a_i = a_j$ whenever there exists $1 \leq k \leq l$ such that 
\[
r_1 + \cdots + r_{k-1} < i \leq j \leq r_1 + \cdots + r_k,
\]
where $r_0:=0$.

For $1 \leq i \leq l$, let $w_i$ denote the common value of $a_j$ for indices $r_1 + \cdots + r_{i-1} < j \leq r_1 + \cdots + r_i$.

For each $1 \leq i \leq l$, let $\Phi_i$ be an iterated function system (IFS) of contracting similarities on $\R^{r_i}$ with equal contraction ratios, satisfying the open set condition (see Section~\ref{sec: Iterated Function Systems} for more details). Let $\Kcal_i$ denote the limit set of $\Phi_i$, and set
\[
s_i := \dim_H(\Kcal_i).
\]
Define
\[
\Kcal := \Kcal_1 \times \cdots \times \Kcal_l \subset \R^m.
\]

    Fix $ x\in \widetilde{\X}$ and $0< q \leq 1$. Then the following bounds hold:
    \begin{align*}
        \dim_H(\Divergent(x,q,a,b)\cap \Kcal) \leq \min_{1 \leq k \leq l} \left(\frac{1}{w_k}\sum_i s_i \left( \max\{w_i, w_k\}- w_i q + w_i \EMassl(x) \right) \right) ,\\
        \dim_P(\Divergent(x,q,a,b) \cap \Kcal) \leq \min_{1 \leq k \leq l} \left( \frac{1}{w_k}\sum_i s_i \left( \max\{w_i, w_k\}- w_i q + w_i \EMassu(x) \right) \right).
    \end{align*}
\end{thm}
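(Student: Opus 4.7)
The plan is to iterate the elementary geometric observation highlighted in the Key Ideas of the paper, producing a sequence of coverings of $\Divergent(x, q, a, b) \cap \Kcal$ by products of IFS cylinders whose total count is controlled at every return-and-short-vector time. First, using Lemma~\ref{lem: Emass}, I replace continuous time by the discrete iterates $(g_t^k x)_{k \in \N}$ for a large parameter $t > 1$. Fix $\e > 0$ and $\delta \in (0, \e/4)$, and set $R_\e := \{k \in \N : \lambda_0(\pi(g_t^k x)) \geq \e\}$. For $\xi \in \Divergent(x, q, a, b)$, the set $G_\xi := \{k : \widetilde{\lambda}_0(g_t^k [I_d, v(\xi)] x) \leq \delta\}$ has $\liminf$-density at least $q$ (for $\delta$ sufficiently small), and by Lemma~\ref{lem: Emass} one has $\limsup_N |R_\e \cap [1,N]|/N \geq 1 - \EMassl(x) - o_\e(1)$ (used for the Hausdorff bound) and $\liminf_N |R_\e \cap [1,N]|/N \geq 1 - \EMassu(x) - o_\e(1)$ (used for the packing bound). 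Hence the useful times $R_\e \cap G_\xi$ have density at least $q - \EMassl(x)$ on a subsequence of $N \to \infty$ and at least $q - \EMassu(x)$ eventually in $N$. At each $k \in R_\e$, the key observation applied to $\Lambda = g_t^k A\Z^d$ (writing $x = [A, w]\widetilde{\Gamma}$) yields the following tube structure: within every product separation cell of side $\e/(2 t^{k w_i})$ in block $i$, the set $S_k(\delta) := \{\xi \in \R^m : \widetilde{\lambda}_0(g_t^k[I_d, v(\xi)] x) \leq \delta\}$ is contained in a single product tube of side $4\delta/t^{k w_i}$ in block $i$.

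Next, I construct an iterative covering. Fix useful times $k_1 < k_2 < \cdots < k_K$ in $[1,N]$ and cover $\Divergent(x,q,a,b) \cap \Kcal$ at stage $j$ by product IFS cylinders $\Kcal_{1, \te_1}\times \cdots \times \Kcal_{l, \te_l}$ with levels chosen so that the block-$i$ cylinder diameter matches the separation scale $\e/(2 t^{k_j w_i})$. By the tube structure, each alive stage-$j$ cell contains a single stage-$j$ tube, and when refining to stage $j+1$, Lemma~\ref{lem: finite intersect} applied in each block bounds the number of stage-$(j+1)$ cylinders of $\Kcal_i$ contained in a stage-$j$ tube by $O\bigl((\delta t^{(k_{j+1}-k_j)w_i}/\e)^{s_i}\bigr)$. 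This saves a factor $(\delta/\e)^{s_i}$ per block compared with the trivial refinement count $O(t^{(k_{j+1}-k_j)w_i s_i})$. Iterating over $j = 1, \ldots, K$ yields an alive stage-$K$ cover of size at most a constant times $\prod_i (t^{k_K w_i}/\e)^{s_i} \cdot (\delta/\e)^{(K-1)\sum_i s_i}$. For each $k \in \{1, \ldots, l\}$, I then sub-cover each stage-$K$ product cylinder by balls of uniform diameter $\sim 1/t^{k_K w_k}$, which introduces an extra factor $\prod_{i \,:\, w_i < w_k} O\bigl(t^{k_K (w_k - w_i) s_i}\bigr)$ per cell.

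Plugging this cover into Lemma~\ref{lem: Falconer}, taking $k_K \to \infty$ with $K/k_K$ approaching the useful-time density, and optimizing by setting $\log t \sim \log(\e/\delta)$ so that each iteration's $(\delta/\e)$-gain precisely cancels one factor of $t$, yields the claimed min-over-$k$ bound (with $\EMassl$ for Hausdorff and $\EMassu$ for packing). The main obstacle is the \emph{multi-schedule issue}: different $\xi \in \Divergent(x,q,a,b) \cap \Kcal$ may have different useful-time sequences $U_\xi$, so the iterative cover must accommodate all admissible schedules $\{k_1, \ldots, k_K\} \subset [1,N]$ of size at least $(q - \EMassl(x))N$ or $(q - \EMassu(x))N$. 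This is handled by a union bound over such schedules, whose entropy $\binom{N}{\lceil (q - \EMassl(x))N\rceil}$ contributes a subexponential correction that vanishes in the limit $\log t \to \infty$. A further technical point is verifying that the hypothesis of Lemma~\ref{lem: finite intersect} (tube diameter bounded above by the matching cell diameter) holds at every stage; this follows from the standing choice $\delta < \e/4$.
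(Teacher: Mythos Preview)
Your overall strategy matches the paper's --- discretize via Lemma~\ref{lem: Emass}, exploit the elementary tube observation at return-and-short-vector times, iterate a covering, take a union over schedules, and appeal to Lemma~\ref{lem: Falconer}. However, there is a genuine gap in the counting step that makes the argument fail in the weighted case.

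The issue is your refinement count $O\bigl((\delta t^{(k_{j+1}-k_j)w_i}/\e)^{s_i}\bigr)$. This is only valid when the ratio $(\delta/\e)\,t^{(k_{j+1}-k_j)w_i}$ is at least $1$, i.e.\ when the stage-$j$ tube is at least as large as the stage-$(j+1)$ cylinders in block~$i$; when the ratio is $<1$ the tube is smaller than the target cylinders and the true count is $O(1)$, not something $<1$. With your coupling $\log(\e/\delta)\sim\log t$, the ratio equals $t^{(k_{j+1}-k_j)w_i-1}$, which is $<1$ for consecutive useful times whenever $w_i<1$ --- so for every $m\ge 2$. Carrying your formula through then gives the bound
\[
\frac{1}{w_k}\sum_i s_i\bigl(\max\{w_i,w_k\}-(q-\EMassl(x))\bigr)
\]
rather than the theorem's $\frac{1}{w_k}\sum_i s_i\bigl(\max\{w_i,w_k\}-w_i(q-\EMassl(x))\bigr)$. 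These differ precisely by the missing $w_i$-factors; in particular, for $m=2$, equal weights, $\Kcal=[0,1]^2$, $q=1$, $\EMassl(x)=0$, your formula yields $-2$. Replacing the count by the correct $O(\max(1,\text{ratio})^{s_i})$ does not obviously recover the right bound either, since the product over $j$ then depends delicately on the gap structure of the schedule.

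The paper sidesteps this entirely by iterating over \emph{all} times $j=1,\dots,N$ (not only useful ones) and placing the cells at the \emph{tube} scale $2\delta t^{-jw_i}$ rather than the separation scale $\e t^{-jw_i}/2$. The key constraint becomes $\delta<\e c_i t^{-w_i}/2$, which forces each stage-$(j-1)$ cell to already lie inside the separation region at time~$j$; then at a useful $j$ the tube and the stage-$j$ cells are at the \emph{same} scale, so Lemma~\ref{lem: finite intersect} gives a count-multiplier of exactly $L$ (a constant), while at non-useful $j$ one uses the trivial multiplier $\prod_i t^{w_is_i}$. This is what produces the block-dependent gain $\prod_i t^{-w_is_i}$ per useful step, and hence the $w_i$-weighting in the final bound. (A minor side remark: the schedule entropy is genuinely exponential in $N$, not subexponential; it is harmless only because its logarithm is $O(N)$ while the denominator in the box-counting ratio is $Nw_k\log t$, so the contribution vanishes as $t\to\infty$ --- exactly as in the paper's factor $2^N$ inside the constant $B$.)
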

\begin{proof}
    Fix $x \in \widetilde{\X}$, $q \in(0,1]$ and $t>1$. Fix $\e>0$ and $\delta>0$ such that $\delta< \e c_i t^{-w_i}/2$ for all $i$. For notational simplicity, we will denote $g_t$ by $g$ throughout the proof. Let us define 
    \begin{align*}
        I(t, \e)&= \{k \in \N: \lambda_0(\pi(g_t^kx)) \leq 2\e\} , \\
        I(t, \e, N) &= I(t, \e) \cap [1, N],\\
          \oEM(x,\e, t) &= \limsup_{N \rightarrow \infty} \frac{1}{N} \# I(t, \e, N), \\
        \uEM(x,\e,t) &= \liminf_{N \rightarrow \infty} \frac{1}{N} \# I(t, \e, N).
    \end{align*}
    Using Lemma~\ref{lem: Emass}, it is clear that 
    \begin{align*}
         \lim_{\e \rightarrow 0} \oEM(x,\e, t) &= \EMassu(x), \\
         \lim_{\e \rightarrow 0} \uEM(x,\e, t) &= \EMassl(x).
    \end{align*}

    Let us now briefly recall some notation related to the fractal \(\Kcal\). For \(1 \leq i \leq l\), the set \(\Kcal_{i}\) is the limit set of the IFS \(\Phi_{i} = \{\phi_{i,e} : e \in E_{i}\}\), with a common contraction ratio \(c_{i}\) and cardinality \(p_{i} = \#E_{i}\). The dimension of \(\Kcal_{i}\) is given by \(s_{i} = -\frac{\log p_{i}}{\log c_{i}}\). The set \(\Kcal = \prod_{i} \Kcal_{i} \subset \mathbb{R}^m\) has dimension \(s = \sum_{i} s_{i}\). Let \(\mu_{i}\) denote the normalized restriction of \(\Hcal^{s_{i}}\) to \(\Kcal_{i}\), and define the measure \(\mu\) on \(\Kcal\) as \(\mu = \otimes_{i} \mu_{i}\). Also, let \(\alpha_i\) denote the diameter of \(\Kcal_i\) for \(1 \leq i \leq l\). Clearly, \(\alpha_i > 0\) if the dimension of \(\Kcal_i\) is not zero.

Note that if \(\dim_H(\Kcal_i) = 0\) for all \(i\), then the theorem holds trivially. Hence, we may assume that \(\dim_H(\Kcal_i) \neq 0\) for some \(i\). Let \(S\) denote the set of all \(1 \leq i \leq l\) such that \(\dim_H(\Kcal_i) = 0\) and set \(S^c = \{1, \ldots, l\} \setminus S\). For all \(i \in S^c\), let \(L_i\) be as defined in Lemma \ref{lem: finite intersect} and define \(L = \prod_i L_i\), where we set \(L_i = 1\) for \(i \in S\).

For all \(i \in S^c\), let us also define \(P_i(j) \in \mathbb{N}\) as the unique integer satisfying
\[
\alpha_i c_i^{P_i(j)+1} < 2 \delta t^{-jw_i} \leq \alpha_i c_i^{P_i(j)},
\]
and set \(P_i(j) = 1\) for \(i \in S\). Finally, for \(j \geq 1\), we define \(\Fcal(j) = \prod_{i} \Fcal_{i}(P_i(j))\).

     Fix $0< q'< q$. Note that
    \begin{align}
    \label{eq: Div q x}
        \Divergent(x, q, a, b) \cap \Kcal &\subset \bigcup_{M \in \N} Z(M),
    \end{align}
   where $Z(M)$ equals set of all $\xi \in \Kcal$ such that for all $N \geq M$, we have
    $$
    \frac{1}{N} \#\{k \in [1, N] \cap \N: \widetilde{\lambda}_0(g^k[I_d, v(\xi)]x) <\delta\} > q'.
    $$
    Also note that for all \(N > M\), we have  
\[
Z(M) \subset \bigcup_Q Z(M, N, Q),
\]
where the union is taken over all subsets \(Q \subset \{1, \ldots, N\}\) satisfying \(\#Q > q'N\). Here \(Z(M, N, Q)\) denotes the set of all \(\xi \in Z(M)\) such that, for every \(1 \leq k \leq N\),  
\[
\widetilde{\lambda}_0(g^k [I_d, v(\xi)] x) < \delta 
\quad \text{if and only if} \quad k \in Q.
\]
Before proceeding further, let us make some easy observations: \\

    \noindent {\bf Observation 1:} Let $1 < M < N$ and $Q \subset \{1, \ldots, N\}$ be such that $\#Q > q'N$. Assume that $1 \leq j \leq N$ and $R \in \mathcal{F}(j-1)$ are such that $R \cap Z(M, N, Q) \neq \emptyset$ and $j \in Q \setminus I(t, \epsilon)$. Then $R \cap Z(M, N, Q)$ is contained in a set of the form $B_1 \times \cdots \times B_l$, where each $B_i$ is a ball of radius $2\delta t^{-jw_i} $.

\noindent {\bf Explanation:} For $1 \leq i \leq l$, let 
\[
\begin{aligned}
    \rho_i &: \mathbb{R}^d = \mathbb{R}^{r_1} \times \cdots \times \mathbb{R}^{r_l} \times \mathbb{R}^n \to \mathbb{R}^{r_i}, \\
    \rho_0 &: \mathbb{R}^d = \mathbb{R}^m \times \mathbb{R}^n \to \mathbb{R}^n, \\
    \rho_i' &: \mathbb{R}^m = \mathbb{R}^{r_1} \times \cdots \times \mathbb{R}^{r_l} \to \mathbb{R}^{r_i},
\end{aligned}
\]
denote the natural projection maps. Suppose $\xi_1, \xi_2 \in R \cap Z(M, N, Q)$ are arbitrary. Since $j \in Q$, both affine lattices $g^j[I_d, v(\xi_1)]x$ and $g^j[I_d, v(\xi_2)]x$ contain a vector of size less than $\delta$, say $(v_1 + g^j v(\xi_1))$ and $(v_2 + g^j v(\xi_2))$, respectively. This implies that for all $1 \leq i \leq l$, we have
\[
\begin{aligned}
    \|\rho_i(v_1 - v_2)\| &\leq \|\rho_i(v_1 + g^j v(\xi_1))\| + \|\rho_i(v_2 + g^j v(\xi_2))\| + \|\rho_i(g^j v(\xi_1) - g^j v(\xi_2))\| \\
    &\leq 2\delta + t^{jw_i} \operatorname{diam}(R) \\
    &\leq 2\delta + t^{jw_i} \alpha_i c_i^{P_i(j-1)} \\
    &\leq 2\delta + t^{jw_i} c_i^{-1} 2\delta t^{-(j-1)w_i} \\
    &\leq (2 + t^{w_i} c_i^{-1} 2) \epsilon c_i t^{-w_i}/2 \\
    &\leq \epsilon + \epsilon = 2\epsilon,
\end{aligned}
\]
and 
\[
\begin{aligned}
    \|\rho_0(v_1 - v_2)\| &\leq \|\rho_0(v_1 + g^j v(\xi_1))\| + \|\rho_0(v_2 + g^j v(\xi_2))\| \\
    &\leq 2\delta \leq 2\epsilon.
\end{aligned}
\]
Therefore, $\|v_1 - v_2\| \leq 2\epsilon$. Note that $v_1 - v_2$ is an element of $\pi(g^j x)$, and since $j \notin I(t, \epsilon)$, we have $\lambda_0(\pi(g^j x)) > 2\epsilon$. This implies $v_1 - v_2$ must be the zero vector.

Thus, we have
\[
\begin{aligned}
    \|\rho_i'(\xi_1 - \xi_2)\| &\leq t^{-jw_i} \|\rho_i(g^j v(\xi_1) - g^j v(\xi_2))\| \\
    &\leq t^{-jw_i} \|\rho_i(v_1 + g^j v(\xi_1)) - \rho_i(v_2 + g^j v(\xi_2))\| \\
    &\leq t^{-jw_i} \left( \|\rho_i(v_1 + g^j v(\xi_1))\| + \|\rho_i(v_2 + g^jv(\xi_2))\| \right) \\
    &\leq 2\delta t^{- jw_i} .
\end{aligned}
\]
Hence, the observation follows. \\

    \noindent {\bf Observation 2:} Fix $1<M< N$ and $Q \subset \{1, \ldots, N\}$ satisfying $\#Q > q'N$. Then for all $ 1< j \leq N$, we have
    \begin{align}
        \label{eq:abb 1}
        \sum_{\substack{R \in \Fcal(j) \\ R \cap Z(M,N,Q) \neq \emptyset}} \mu(R) \leq \begin{cases}
             L \left(\prod_i {c_i^{s_i (P_i(j) - P_i(j-1))}}  \right) \sum_{\substack{R \in \Fcal(j-1) \\ R \cap Z(M,N,Q) \neq \emptyset}} \mu(R)  \quad \text{if $j \in Q \setminus I(t,\e)$}, \\
             \sum_{\substack{R \in \Fcal(j-1) \\ R \cap Z(M,N,Q) \neq \emptyset}} \mu(R) \quad \text{otherwise.}
        \end{cases}
    \end{align}
    \noindent {\bf Explanation:} First assume that $j \in Q \setminus I(t, \e)$. Fix $R \in \Fcal(j-1)$ such that $R \cap Z(M,N,Q) \neq \emptyset$. Then by Observation 1, we have $R \cap Z(M,N,Q)$ is contained in a set of form $B_1 \times \cdots \times B_l$, where each $B_i$ is a ball of radius $2 \delta  t^{-jw_j}$. By definition of $L$ and $\Fcal(j)$, it is clear that there are at most $L$-many elements in $\Fcal(j)$ which intersect $B_1 \times \cdots \times B_l$. Thus, we have that number of $R' \in \Fcal(j)$ such that $R' \subset R$ and $R \cap Z(M,N,Q) \neq \emptyset$ is at most $L$. Since $\mu(R)= \prod_i c_i^{s_i P_i(j-1)}$ and $\mu(R')= \prod_i c_i^{s_i P_i(j)}$ for any $R' \in \Fcal(j)$, we get that 
    \begin{align}
        \label{eq: abb 2}
        \sum_{\substack{R' \in \Fcal(j) \\ R' \subset R,\  R' \cap Z(M,N,Q) \neq \emptyset}}\mu(R') \leq L \left(\prod_i {c_i^{s_i P_i(j)}} \right) \leq L \left(\prod_i {c_i^{s_i (P_i(j)- P_i(j-1))}} \right) \mu(R)
    \end{align}
    The first case of \eqref{eq:abb 1} now follows from \eqref{eq: abb 2}. The second case of \eqref{eq:abb 1} is trivial. \\

     \noindent {\bf Observation 3:} Fix $1<M< N$ and $Q \subset \{1, \ldots, N\}$ satisfying $\#Q > q'N$. Then 
     \begin{align}
    \sum_{\substack{R \in \Fcal(N) \\ R \cap Z(M,N,Q) \neq \emptyset}} \mu(R) \leq L^N \left(\prod p_i \right)^N \left( \prod_i t^{- s_i w_i} \right)^{q'N - \#I(t,\e,N) }. \label{eq: abb 4} 
    \end{align}
    \noindent {\bf Explanation:} Note that by iteratively use of \eqref{eq:abb 1}, we have
    \begin{align}
    \label{eq: abb 3}
        \sum_{\substack{R \in \Fcal(N) \\ R \cap Z(M,N,Q) \neq \emptyset}} \mu(R) &\leq L^N \left(\prod_{j \in Q \setminus I(t,\e)}\prod_i {c_i^{s_i (P_i(j) - P_i(j-1))}}  \right). 
    \end{align}
    Also note that by definition of $P_i(j)$, for $ i  \in S^c $, we have
    \begin{align*}
      c_i^{P_i(j)} \leq \frac{2\delta}{\alpha_i c_i} t^{-jw_i}, \quad \text{ and } \quad  c_i^{-P_i(j-1)} \leq \frac{\alpha_i}{2\delta} t^{(j-1)w_i},
    \end{align*}
    and $P_{i}(j)= P_{i}(j-1)=1$ for $ i \in S$. Plugging this into \eqref{eq: abb 3} gives that 
    \begin{align*}
    \sum_{\substack{R \in \Fcal(N) \\ R \cap Z(M,N,Q) \neq \emptyset}} \mu(R) &\leq L^N \left(\prod_{j \in Q \setminus I(t,\e)}\prod_i t^{- s_i w_i} c_i^{-s_i}  \right) \nonumber \\
    &\leq L^N \left(\prod_i c_i^{-s_i} \right)^N \left( \prod_i t^{- s_i w_i} \right)^{\#(Q \setminus I(t,\e) )} \nonumber \\
    &\leq L^N \left(\prod_i p_i \right)^N \left( \prod_i t^{- s_i w_i} \right)^{q'N - \#I(t,\e,N) }. 
    \end{align*}
    Thus, the observation follows. \\

     \noindent {\bf Observation 4:} For all $1< M \leq N$, we have
     \begin{align*}
        \sum_{\substack{R \in \Fcal(N) \\ R \cap Z(M) \neq \emptyset}} \mu(R) \leq 2^N L^N \left(\prod p_i \right)^N \left( \prod_i t^{- s_i w_i} \right)^{q'N - \#I(t,\e,N) }.
    \end{align*}

    \noindent {\bf Explanation.} Note that \(Z(M)\) is the union of the sets \(Z(M, N, Q)\) with \(Q \subset \{1, \ldots, N\}\) and \(\#Q > q'N\). Since there are at most \(2^N\) possible choices for \(Q\), the observation follows directly from~\eqref{eq: abb 4}.\\

\noindent {\bf Observation 5:} Fix $1 \leq k \leq l$. For all $\gamma > 0$, define $N_\gamma \in \mathbb{N}$ as the unique integer satisfying $2 \delta t^{-N_\gamma w_k} \leq \gamma < 2\delta t^{-(N_\gamma-1)w_k}.$ Then for all $M > 1$ and sufficiently small $\gamma$, we have:
\begin{align}
    \label{eq: abb 6}
    \frac{\log ( C_\gamma(Z(M)))}{-\log (\gamma)} \leq 
    \frac{\log(D) + {N_\gamma} \log(B) + \sum_{i} \Big[s_i N_\gamma \max\{w_i, w_k\} - s_i w_i q'N_\gamma + s_i w_i \#I(t,\epsilon, N_\gamma)\Big] \log (t)}{
    -\log(2\delta) + (N_\gamma-1)w_k \log (t)},
\end{align}
where $C_\gamma(Z(M))$ denotes the smallest number of sets of diameter at most $\gamma$ that cover $Z(M)$, $D= \left( \prod_{i \in S^c} \left( \frac{\alpha_i }{2 \delta c_{i}} \right)^{s_i} \right)$ and $B= 2 L \left(\prod p_i \right)$. 

\noindent {\bf Explanation:}  
Fix $M > 1$ and $\gamma > 0$. Assume that $\gamma$ is small enough so that $N_\gamma > M$. For $i \in S^c$ and $j \in \mathbb{N}$, define $K_i(j)$ as the unique integer satisfying 
\begin{align*}
    \alpha_i c_i^{K_i(j)} < 2 \delta t^{-j \max\{w_k, w_i\}} \leq \alpha_i c_i^{K_i(j)-1},
\end{align*}
and set $K_i(j) = 1$ for $i \in S$. Clearly then for all $R \in \prod_{i} \mathcal{F}_{i}(K_i(N_\gamma))$, the diameter of $R$ is smaller than $\gamma$. Also note that $K_i(j) \geq P_i(j)$ for all $i \geq k$.  

To cover $Z(M)$ by sets of diameter less than or equal to $\gamma$, we select sets from $\prod_{i} \mathcal{F}_{i}(K_i(N_\gamma))$ that intersect $Z(M)$. The total number of elements in $\prod_{i} \mathcal{F}_{i}(K_i(N_\gamma))$ is:
\begin{align*}
    \prod_{i} p_{i}^{K_i(N_\gamma)} = \prod_{i} c_{i}^{-s_{i} K_i(N_\gamma)} \leq \prod_{i \in S^c} \left(\frac{\alpha_i}{2 \delta c_{i}} t^{N_\gamma \max\{w_k, w_i\}}\right)^{s_i}.
\end{align*}

Each element has an equal $\mu$-measure. Therefore, the number of sets covering $Z(M)$ satisfies:
\begin{align*}
    &\#\{R \in \prod_{i} \mathcal{F}_{i}(K_i(N_\gamma)) : R \cap Z(M) \neq \emptyset\} \\
    &\leq \prod_{i \in S^c} \left(   \frac{\alpha_i }{2 \delta c_{i}} t^{N_\gamma \max\{w_k,w_i\}} \right)^{s_i}. \left(\sum_{\substack{R \in \Fcal({N_\gamma}) \\ R\cap Z(M) \neq \emptyset }} \mu(R) \right) \\
        &\leq \prod_{i \in S^c} \left(   \frac{\alpha_i }{2 \delta c_{i}} t^{N_\gamma \max\{w_k,w_i\}} \right)^{s_i} . 2^{N_\gamma} L^{N_\gamma} \left(\prod p_i \right)^{N_\gamma} \left( \prod_i t^{- s_i w_i} \right)^{q'N_\gamma - \#I(t,\e,N_\gamma) } \\
        &\leq D B^{N_\gamma}   \prod_i   t^{s_iN_\gamma \max\{w_i, w_k\}      -s_iw_i q'N_\gamma + s_i w_i  \#I(t,\e,N_\gamma)} .
    \end{align*}
Thus, $Z(M)$ can be covered by at most:
\begin{align*}
    D B^{N_\gamma} \prod_i t^{s_i N_\gamma \max\{w_i, w_k\} - s_i w_i q'N_\gamma + s_i w_i \#I(t,\epsilon, N_\gamma)}
\end{align*}
sets of diameter at most $\gamma$. Since $\gamma < 2\delta t^{-(N_\gamma-1)w_k}$, the observation follows. \\

    Note that $N_\gamma \rightarrow \infty$ as $\gamma \rightarrow 0$. Therefore on taking $\liminf$ and $\limsup$ as $\gamma \rightarrow 0$ in \eqref{eq: abb 6}, we get from Lemma \ref{lem: Falconer} that
    \begin{align}
        \dim_P(Z(M)) &\leq  \frac{\log B + \sum_{i} \left(  s_i \max\{w_i, w_k\} -s_iw_i q' + s_i w_i \oEM(x, \e,t)   \right)\log(t) }{w_k \log t}, \label{eq: abb 7} \\
        \dim_H(Z(M)) &\leq \frac{\log B + \sum_{i} \left(  s_i \max\{w_i, w_k\} -s_iw_i q' + s_i w_i \uEM(x, \e,t)   \right)\log(t) }{w_k \log t}. \label{eq: abb 8}
    \end{align}
    Note that $\dim_P(\cup_i J_i) = \sup_i \dim_P(J_i)$ and $\dim_H(\cup_i J_i) = \sup_i \dim_H(J_i)$ for any countable collection of Borel sets $J_i$. Thus, from \eqref{eq: Div q x} and \eqref{eq: abb 7}, \eqref{eq: abb 8}, we get that
    \begin{align}
         \dim_H(\Divergent(x,q,a,b)) \leq \frac{\log B + \sum_{i} \left(  s_i \max\{w_i, w_k\} -s_iw_i q' + s_i w_i \uEM(x, \e,t)   \right)\log(t) }{w_k \log t}, \label{eq: abb 9}\\
        \dim_P(\Divergent(x,q,a,b)) \leq \frac{\log B + \sum_{i} \left(  s_i \max\{w_i, w_k\} -s_iw_i q' + s_i w_i \oEM(x, \e,t)   \right)\log(t) }{w_k \log t}. \label{eq: abb 10}
    \end{align}
    Since $B$ is independent of $q'$, $t$ and $\e$, first take limit as $\e \rightarrow 0$ in \eqref{eq: abb 9} and \eqref{eq: abb 10}, and then take limit as $t \rightarrow \infty$ and $q' \rightarrow q$ to get that
    \begin{align*}
         \dim_H(\Divergent(x,q,a,b)) &\leq \frac{1}{w_k}\sum_i s_i \left( \max\{w_i, w_k\}- w_i q + w_i \EMassl(x) \right), \\
          \dim_P(\Divergent(x,q,a,b)) &\leq \frac{1}{w_k} \sum_i s_i \left( \max\{w_i, w_k\}- w_i q + w_i \EMassu(x) \right).
    \end{align*}
    Since $1 \leq k \leq l$ is arbitrary, the theorem is proved.

\end{proof}

\section{Final Proof I}
\label{sec: Final Proof I}
\begin{proof}[Proof of Theorem \ref{main thm 2}]
    Note that in notation of Theorem \ref{gen thm 2}, we have 
    \begin{align*}
        \EMassl(\theta)&= \EMassl([u_\theta,0] \Z^d), \\
        \EMassu(\theta) &= \EMassu([u_\theta,0] \Z^d), \\
        \Div_\theta(a,b,q) &= \Divergent([u_\theta,0] \Z^d, a, b, q),
    \end{align*}
    for all $0< q \leq 1$. Thus, the theorem follows directly from Theorem \ref{gen thm 2}.
\end{proof}

\begin{proof}[Proof of Corollary~\ref{main thm 2'}]
    The theorem follows directly from theorem \ref{main thm 2} by choosing $l=m$, $r_1= \cdots = r_l= 1$, $\Kcal_i= [0,1]$ for all $i$.
\end{proof}

\begin{proof}[Proof of Corollary~\ref{main thm 2''}]
The first part of the corollary follows directly from Corollary~\ref{main thm 2'} by setting 
$a_1 = \cdots = a_m = 1/m$ and $b_1 = \cdots = b_n = 1/n$.  The second part then follows from the first together with \eqref{eq: inclusion}.
\end{proof}

\begin{proof}[Proof of Corollary~\ref{cor 1}]
Suppose $\EMassu(\theta)=0$. Then, using Corollary~\ref{main thm 2'}, \eqref{eq: inclusion}, and Lemma~\ref{lem: Falconer}, we have
\begin{align*}
    0 &\leq \dim_H(\Sing_\theta(a,b)) \leq \dim_P(\Sing_\theta(a,b)) \leq \dim_P(\Div_\theta(a,b,1)) \\
    &\leq \min_{1 \leq k \leq m} \frac{1}{a_k} \sum_{i=1}^m \Big( \max\{a_i, a_k\} - a_i \cdot 1 + a_i \EMassu(\theta) \Big) \\
    &\leq \frac{1}{a_m} \sum_{i=1}^m \Big( \max\{a_i, a_m\} - a_i + a_i \EMassu(\theta) \Big) \\
    &= \frac{1}{a_m} \sum_{i=1}^m \Big( a_i - a_i + a_i \cdot 0 \Big) = 0.
\end{align*}
Hence, the corollary follows.
\end{proof}

\begin{proof}[Proof of Corollary \ref{cor 2}]
    Note that if $\theta \notin \Div^0(a,b,1)$, then $\EMassl(\theta)<1$. Thus, we have
    \begin{align*}
        &\dim_H(\Sing_\theta(a,b)) \leq \dim_H(\Div_\theta(a,b,1)) \quad \text{using Lemma \ref{lem: Sing Dynamical Interpretation}} \\
        &\leq \min_{1 \leq k \leq m} \frac{1}{a_k}\sum_{i=1}^m \left( \max\{a_i, a_k\}- a_i  + a_i \EMassl(\theta) \right) \quad \text{ using Corollary~\ref{main thm 2'}} \\
        &\leq \frac{1}{a_1}\sum_{i=1}^m \left( \max\{a_i, a_1\}- a_i  + a_i \EMassl(\theta) \right)\\
        &= \sum_{i=1}^m \left( 1- \frac{a_i}{a_1} (1- \EMassl(\theta) ) \right) \\
        &< \sum_{i=1}^m 1 = m.
    \end{align*}
    The corollary now follows.
\end{proof}

\begin{proof}[Proof of Corollaries~\ref{cor intro 1} and~\ref{cor intro 3}]
The result follows directly from Corollary~\ref{cor 1}, together with the discussion in Remark~\ref{rem: wide measures}.
\end{proof}


\section{Notation II}
\label{sec: Notation II}
The following notation will be used for the rest of the paper.

\subsection{Iterated Function Systems}
For the rest of the paper, we fix for all $1 \leq i \leq m$ and $1 \leq j \leq n$, an iterated function system (IFS) $\Phi_{ij}=\{\phi_{ij,e}: e\in E_{ij}\}$ consisting of contracting similarities on $\R$ with equal contraction ratios, satisfying the open set condition. Let $p_{ij}= \#E_{ij}$ and let $c_{ij}$ denote the common contraction ratio of elements of $\Phi_{ij}$. Assume that the limit set of $\Phi_{ij}$, denoted by $\Kcal_{ij}$ has positive Hausdorff dimension, that is, $ \dim_H(\Kcal_{ij})= s_{ij}= -\log p_{ij}/ \log c_{ij} >0$.

Let us define $\Kcal =\{\theta \in \Mat : \theta_{ij} \in \Kcal_{ij} \} $ and $s= \sum_{ij} s_{ij}$. Let $\mu_{ij}$ denote the normalised restriction of $\Hcal^{s_{ij}}$ to $\Kcal_{ij}$ and define the measure $\mu= \otimes_{ij} \mu_{ij}$ on $\Kcal$. 

Let $\Xi \subset \Mat$ be defined as $\Xi=\{r \in \Mat: r_{ij} \in [c_{ij},c_{ij}^{-1}]\}$. For all $1 \leq i \leq m$, $1 \leq j \leq n$ and $r \in \Xi$, we define $\mur_{ij}$ as the measure on $\R$ obtained by pushing forward the measure $\mu_{ij}$ under map $ x \mapsto r_{ij}x$. We also define $\mur = \prod_{ij}\mur_{ij}$, viewed as measure on $\Mat$.

\subsection{Representation Theory}
\label{subsec: Rep Theory}
For all $1 \leq l \leq d$, define 
$$V_l=\bigwedge^l \R^{d}, \qquad   V= \bigoplus_{l=1}^{d} V_l .$$
Define action of $G$ on $V$ (resp. $V_l$) via the map $g \mapsto \bigoplus_{l=1}^{d} \bigwedge^l g$ (resp. $g \mapsto \wedge^l g$). Suppose $\{\bfe_1, \ldots , \bfe_{d}\}$ denote the standard basis of $\R^{d}$. For each index set $I = \{ i_1 < \cdots < i_l \} \subset \{1,\dots,d\}$, we define
  \begin{align*} 
  	{\bfe}_I := {\bfe}_{i_1} \wedge \cdots \wedge {\bfe}_{i_l}.
  \end{align*}
  The collection of monomials $\bfe_I$ with $\#I =l$, gives a basis of $V_l = \bigwedge^l \R^{d}$ for each $ 1\leq l\leq d$.
  For $v\in V$ and each index set $I$, we denote by $v_I \in \R$, the unique value so that $v= \sum_{J}v_J\bfe_J$, where the sum is taken over all index sets $J$. We define {\em norm} $\|.\|$ on each of $V$ as
  \begin{align}
  \label{eq: def ||||}
      \|v\|= \max_{I} |v_I|,
  \end{align}
  where the maximum is taken over all index sets $I$. For $g \in G$, we define
    \begin{align*}
        \|g\|:= \sup \left\{ \|gv\|: v\in V, \|v\| = 1 \right\}.
    \end{align*}
    Also, for any compact subset $Q \subset G$, we define
    \begin{align*}
          \|Q\| = \sup  \{ \|g\|, \|g^{-1}\|: g \in Q \},
     \end{align*}

For $1 \leq l \leq d$, we define $V_l^+$ to be the subspace of $V_l$ spanned by $\bfe_I$, where $I$ varies over the index sets satisfying $\#(I \cap \{1, \ldots, m\}) = \min\{l,m\}$. Similarly, define $V_l^-$ to be the subspace of $V_l$ spanned by $\bfe_I$, where $I$ varies over the index sets satisfying $\#(I \cap \{1, \ldots, m\}) \neq \min\{l,m\}$ Also define $\pi_{l+} $ (resp. $\pi_{l-}$) as the natural projection map from $V_l$ onto $V_l^+$ (resp. $V_l^-$). Note that for all $\theta \in \Mat$, we have $u(\theta)$ act trivially on $V_l^+$, i.e., $u(\theta)|_{V_{l}^+} = \mathrm{Id}_{V_{l}^+}$. We also define for $1 \leq l \leq d-1$, $w_l$ as least $w>0$ such that the subspace $V_{l,w}^+= \{v \in V_{l}^+: g_tv = t^wv\}$ is non-empty. It is easy to see that
\begin{align*}
    w_l &= \begin{cases}
         a_m + \cdots +a_{m-l+1} &\text{ if } l \leq m, \\
        b_n+ \cdots + b_{l-m+1}  &\text{ if } l \geq m.
    \end{cases}
\end{align*}

\subsection{Covolume of Lattice}
\label{subsec: Covolume of Lattice}
For a discrete subgroup $\Lambda$ of $\R^{d}$ of rank $l \geq 1$, we define $v_{\Lambda} \in V_l/\{\pm 1\}$ as $v_1 \wedge \cdots \wedge v_l$, where $v_1, \ldots, v_l$ is a $\Z$-basis of $\Lambda$. Note that the definition of $v_{\Lambda}$ is independent of the choice of basis $v_1, \ldots, v_l$. We define $\|\Lambda\|$ as
        \begin{align}
            \label{eq: def lattice covol}
            \|\Lambda\| = \|v_{\Lambda}\|,
        \end{align}
        where $\|.\|$ on $V_l$ is defined as in \eqref{eq: def ||||}. We also define $\|\{0\}\|=1$.

For $\Lambda \in \X$, let $P(\Lambda)$ denote the set of all \emph{primitive} subgroups of the lattice $\Lambda$, that is, the subgroups $L \subset \Lambda$ satisfying
\[
L = \Lambda \cap \sspan_\R(L),
\]
where $\sspan_\R(L)$ denotes the smallest real vector subspace of $\R^d$ containing $L$.

We will need the following important lemma.
  \begin{lem}[{\cite[Lem.~5.6]{EMM98}}]
    \label{lem EMM}
    There exists a constant $D > 0$ such that the following inequality holds. For all $\Lambda \in \X$ and for all $\Lambda_1, \Lambda_2 \in P(\Lambda)$, we have:
    \begin{align}
        \label{eq: EMM98}
        \|\Lambda_1 \cap \Lambda_2\| \|\Lambda_1 + \Lambda_2\| \leq D \|\Lambda_1\| \|\Lambda_2\|.
    \end{align}
\end{lem}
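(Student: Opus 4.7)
The plan is to reduce the stated max-norm inequality to its Euclidean analogue and then invoke the equivalence of norms on the finite-dimensional spaces $V_l$. Since on each $V_l$ the paper's norm $\|\cdot\|$ and the standard Euclidean norm $\|\cdot\|_{\mathrm{Euc}}$ satisfy $\|v\| \le \|v\|_{\mathrm{Euc}} \le \sqrt{\binom{d}{l}}\,\|v\|$, it suffices to prove the sharper Euclidean inequality
\[
\|v_{\Lambda_1\cap\Lambda_2}\|_{\mathrm{Euc}}\,\|v_{\Lambda_1+\Lambda_2}\|_{\mathrm{Euc}} \;\le\; \|v_{\Lambda_1}\|_{\mathrm{Euc}}\,\|v_{\Lambda_2}\|_{\mathrm{Euc}},
\]
after which I absorb $\max_{l}\binom{d}{l}^{2}$ into the final constant $D$.

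Setting $L=\Lambda_1\cap\Lambda_2$ and $M=\Lambda_1+\Lambda_2$, the first substantial step will be the rank identity $\operatorname{rank}(L)+\operatorname{rank}(M)=\operatorname{rank}(\Lambda_1)+\operatorname{rank}(\Lambda_2)$. I will use the primitivity of $\Lambda_1,\Lambda_2$ in exactly this one place: it implies that each $\sspan_\R(\Lambda_i)$ is $\Lambda$-rational, and since intersections of rational subspaces are rational, $\sspan_\R(L)$ coincides with $\sspan_\R(\Lambda_1)\cap \sspan_\R(\Lambda_2)$; the identity then follows from the Grassmann dimension formula. Next, I would pick a $\Z$-basis $e_1,\ldots,e_k$ of $L$, extend by $f_1,\ldots,f_p$ to a $\Z$-basis of $\Lambda_1$, and by $g_1,\ldots,g_q$ to a $\Z$-basis of $\Lambda_2$. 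Combined with the rank identity and the $\R$-linear independence of the concatenation (which falls out of the span decomposition above), this forces $e_1,\ldots,e_k,f_1,\ldots,f_p,g_1,\ldots,g_q$ to be a $\Z$-basis of $M$.

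With that basis fixed, the four wedge representatives take the explicit form
\[
v_L = e_1\wedge\cdots\wedge e_k, \quad v_{\Lambda_1} = v_L\wedge F, \quad v_{\Lambda_2} = v_L\wedge G, \quad v_M = v_L\wedge F\wedge G,
\]
where $F = f_1\wedge\cdots\wedge f_p$ and $G = g_1\wedge\cdots\wedge g_q$. Let $\widetilde F$, $\widetilde G$ be the matrices obtained by orthogonally projecting the columns $f_j$, $g_r$ onto $\sspan_\R(L)^{\perp}$. A Schur-complement factorization of the relevant Gram matrices will yield
\[
\|v_{\Lambda_i}\|_{\mathrm{Euc}}^{2} = \|v_L\|_{\mathrm{Euc}}^{2}\,\det\!\bigl(\widetilde F_i^{\top}\widetilde F_i\bigr), \qquad
\|v_M\|_{\mathrm{Euc}}^{2} = \|v_L\|_{\mathrm{Euc}}^{2}\,\det\!\begin{pmatrix}\widetilde F^{\top}\widetilde F & \widetilde F^{\top}\widetilde G \\ \widetilde G^{\top}\widetilde F & \widetilde G^{\top}\widetilde G\end{pmatrix},
\]
and the Euclidean inequality will then be exactly Fischer's inequality for the positive semidefinite block matrix on the right.

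The main subtlety I expect is the rank identity: without primitivity of both $\Lambda_i$ the inclusion $\sspan_\R(L)\subset \sspan_\R(\Lambda_1)\cap \sspan_\R(\Lambda_2)$ can be strict, and the concatenated list would fail to span $M$ with the correct dimension, which is precisely what would cause the inequality to fail (even up to a constant depending on $d$) if primitivity were dropped. Granting that step, the Gram-matrix manipulation is routine and Fischer's inequality is standard linear algebra, while the passage between $\|\cdot\|$ and $\|\cdot\|_{\mathrm{Euc}}$ produces an explicit combinatorial $D$ depending only on $d$.
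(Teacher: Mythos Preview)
Your approach matches the paper's exactly at the reduction step: the paper does not reprove the lemma but simply cites \cite[Lem.~5.6]{EMM98} for the Euclidean case (where $D=1$) and then invokes norm equivalence on the finite-dimensional spaces $V_l$ to pass to the max norm, which is precisely your first paragraph. Where you go further is in supplying a self-contained proof of the Euclidean inequality via the Gram--Schur--Fischer route, whereas the paper is content to defer that to the reference; your argument for the Euclidean case is correct, including the use of primitivity to obtain $\sspan_\R(\Lambda_1\cap\Lambda_2)=\sspan_\R(\Lambda_1)\cap\sspan_\R(\Lambda_2)$ and hence the rank identity that makes the concatenated list a $\Z$-basis of $\Lambda_1+\Lambda_2$.
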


\begin{rem}
    In \cite{EMM98}, inequality \eqref{eq: EMM98} is established with $D = 1$, but the norm $\|\Lambda\|$ is defined differently. There, $\|\Lambda\|$ is taken as $\|v_{\Lambda}\|$, where $\|.\|$ on $V_l = \wedge^l \R^d$ is the norm induced by the Euclidean norm on $\R^d$. Since any two norms on a finite-dimensional vector space are equivalent, it follows that \eqref{eq: EMM98} holds for some sufficiently large $D$ under our current definition of $\|\Lambda\|$.
\end{rem}

\section{Dimension Bound in Generalized Setup II}
\label{sec: Dimension Bound in Generalized Setup II}
The following section is taken from \cite{aggarwalghoshsingular}.

\begin{defn} [The Contraction Hypothesis]
      \label{def: Contraction Hypothesis}
      Suppose $Y$ is a metric space equipped with an action of $G$. Given a collection of functions $\{ f_\tau: Y \rightarrow (0,\infty]: \tau \in S\}$ for some unbounded set $S \subset (0,\infty)$ and $\beta > 0$, we say that $\mu$ satisfies the $((f_\tau)_\tau, \beta)$-\textbf{contraction hypothesis} on $Y$ if the following properties hold: 

\begin{enumerate}
    \item The set $Y_f = \{y \in Y : f_\tau(y) = \infty\}$ is independent of $\tau$ and is $G$-invariant.

    \item For every $\tau \in S$, $f_\tau$ is uniformly log-Lipschitz with respect to the $G$-action. That is, for every bounded neighborhood $\mc{O}$ of the identity in $G$, there exists a constant $C_\mc{O} \geq 1$ such that for all $g \in \mc{O}$, $y \in Y$, and $\tau \in S$,
    \begin{align*}
        C_\mc{O}^{-1} f_\tau(y) \leq f_\tau(g y) \leq C_\mc{O} f_\tau(y).
    \end{align*}

    \item There exists a constant $c \geq 1$ such that the following holds: for every $\tau \in S$, there exists $T > 0$ such that for all $y \in Y$, $r \in \Xi$, and $f_\tau(y) > T$,
    \begin{align*}
        \int_{\Mat} f_\tau(g_\tau u(x) y) \, d\mur(x) \leq c f_\tau(y) \tau^{-\beta}.
    \end{align*}
\end{enumerate}
The functions $f_\tau$ will be referred to as \textbf{height functions}.
\end{defn}

\begin{defn}
\label{def:div}
   Suppose $Y$ is a locally compact second countable metric space equipped with a continuous $G$ action. Given a closed $G$-invariant subset $Y' \subset Y$, $0 < p \leq 1$ and $y \in Y \setminus Y'$, we define $\Diver(y,Y', p)$ as set of all $x \in \Mat$ such that
   $$
    \liminf_{T \rightarrow \infty } \frac{1}{T} \int_0^T \delta_{g_{e^t}u(x)y}(Y \setminus K) \, dt \geq p,
   $$
   for all compact subsets $K \subset Y \setminus Y'$.
\end{defn}

We have the following theorem.
\begin{thm}[{\cite[Thm.~6.5]{aggarwalghoshsingular}}]
\label{thm: contraction implies dimesnion bound}
    	Let $Y$ be a locally compact second countable metric space equipped with a continuous action of $G$. Assume that there exists a collection of functions $\{ f_\tau: Y \rightarrow (0,\infty]: \tau \in S\}$ for some unbounded set $S \subset (0,\infty)$ and $0<\beta< (a_1+b_1)s $, such that $\mu$ satisfies the $(\{f_\tau\}_{\tau \in S}, \beta)$-contraction hypothesis on $Y$. Assume that $Y_f = \{y \in Y : f_\tau(y) = \infty\}$, which is independent of $\tau$ and is $G$-invariant. Then for all $y\in Y \backslash Y_f$ and $0<p\leq 1$, 
        \begin{align*}
        \dim_P \left( \Diver(y,Y_f, p) \cap \Kcal \right) \leq s-\frac{p\beta}{a_1+b_1}.
        \end{align*}
        Also, for any sequence $(c_{\tau})_{\tau \in S}$ of positive real numbers and $0<a \leq (a_1+ b_1)s- \beta$, we have
        \begin{align*}
        \dim_P \left(x\in \mc{K}: \substack{\text{ for all $\tau \in S$, the following holds for all sufficiently large $t$} \\  f_\tau(g_tu(x)y) \geq c_\tau t^{a} }  \right) \leq s-\frac{a+\beta }{a_1+b_1} .
        \end{align*}
\end{thm}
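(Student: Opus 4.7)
The plan is to iterate the contraction hypothesis along the diagonal flow, extract a mean-exponential decay of $f_\tau$ on long orbits, and translate it into a covering estimate on $\Kcal$ that yields the desired packing dimension bound via Lemma~\ref{lem: Falconer}. The central algebraic input is the commutation relation $g_\tau u(z) = u(g_\tau z g_\tau^{-1})\, g_\tau$, which after repeated substitution gives a factorisation of the form
$$
g_\tau u(x_1)\, g_\tau u(x_2) \cdots g_\tau u(x_N)\, y \;=\; u(\tilde{x})\, g_\tau^N\, y,
$$
where $\tilde{x}_{ij} = \sum_{k=1}^N \tau^{k(a_i + b_j)} (x_k)_{ij}$ is a base-$\tau^{(a_i+b_j)}$ expansion. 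When each $x_k$ ranges over $\Xi$ equipped with $\mur$, the pushforward of $(\mur)^{\otimes N}$ under this addition-rescaling map is comparable, up to bounded multiplicative constants, to the restriction of $\mur$ to an appropriate union of self-similar cells of $\Kcal$, provided $\tau$ is chosen synchronised with the common contraction ratios $c_{ij}$ of the IFSs.

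Given this factorisation, a Fubini argument applied to clause~(3) of Definition~\ref{def: Contraction Hypothesis} yields inductively
$$
\int f_\tau\bigl(g_\tau u(x_1) \cdots g_\tau u(x_N)\, y\bigr)\, d\mur(x_1)\cdots d\mur(x_N) \;\leq\; c^N f_\tau(y)\, \tau^{-N\beta},
$$
after ensuring through a stopping-time argument (and the log-Lipschitz clause~(2)) that the height-threshold hypothesis is preserved at every step. Markov's inequality then controls the $\mur$-measure of points on which the height at time $\tau^N$ exceeds a chosen threshold $H_N$. If $x \in \Diver(y, Y_f, p) \cap \Kcal$, the divergence definition forces the height to exceed $H_N$ for at least a $p$-fraction of the discrete times $k \leq N$, once $H_N$ is chosen so that the sublevel set $\{f_\tau \leq H_N\}$ is contained in a prescribed compact subset of $Y \setminus Y_f$. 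Unwinding the iteration, $\Diver(y, Y_f, p) \cap \Kcal$ is covered, up to $\mur$-null sets, by at most $C^N \tau^{-p N \beta}$ cells of the $N$-th IFS partition, each of diameter of order $\tau^{-N(a_1 + b_1)}$ (the exponent $a_1 + b_1$ being the largest rescaling rate, which determines the coarsest cell size).

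Combining this covering estimate with the mass-to-scale estimates of Proposition~\ref{prop: important IFS} and then invoking Lemma~\ref{lem: Falconer} yields
$$
\dim_P(\Diver(y, Y_f, p) \cap \Kcal) \;\leq\; s - \frac{p\beta}{a_1 + b_1}.
$$
The second assertion is proved by the same scheme with the threshold taken as $H_N \asymp c_\tau (N\log\tau)^{a}$; the additional polynomial factor contributes the extra $a/(a_1+b_1)$ subtraction in dimension.

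The principal obstacle lies in establishing compatibility between the diagonal rescaling rates $(a_i + b_j)$ and the IFS contraction ratios $c_{ij}$, so that the pushforward of $(\mur)^{\otimes N}$ under the addition-rescaling map faithfully reflects the geometry of $\mu$ on $\Kcal$. This compatibility --- controlled via a uniform cell-count bound analogous to Lemma~\ref{lem: finite intersect} --- is exactly what produces the factor $(a_1 + b_1)$ in the final denominator and clarifies why the hypothesis $\beta < (a_1 + b_1)s$ is the natural regime. A secondary delicate point is the passage from measure estimates to genuine covering estimates, for which the lower mass bound on IFS cells in Proposition~\ref{prop: important IFS} is essential: without it, the $\mur$-negligible exceptional set could inflate the covering count and spoil the dimension bound.
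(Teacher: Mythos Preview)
The paper does not prove this statement: it is quoted as Theorem~6.5 of \cite{aggarwalghoshsingular} and used as a black box, so there is no in-paper proof to compare against.

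Your sketch does capture the architecture of the argument in \cite{aggarwalghoshsingular} (itself an adaptation of \cite{KKLM} and \cite{Khalilsing}): iterate the contraction inequality along the $g_\tau$-flow, use the self-similar structure of $\Kcal$ to relate the iterated integral to a cell count, and feed the resulting covering estimate into Lemma~\ref{lem: Falconer}. Two technical points in your outline deserve correction. First, the displayed iteration bound cannot be obtained by straight repetition of clause~(3), since that clause is only valid when $f_\tau(y) > T$; the standard remedy is not a stopping time but rather to combine clause~(3) with the log-Lipschitz bound of clause~(2) (which handles the regime $f_\tau(y) \leq T$) into a single inequality of the form $\int f_\tau(g_\tau u(x) y)\, d\mur(x) \leq c\tau^{-\beta} f_\tau(y) + b$ valid for all $y$, and iterate that. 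Second, the compatibility between the rescaling exponents $\tau^{a_i+b_j}$ and the IFS ratios $c_{ij}$ is not achieved by a clever choice of $\tau$: it is precisely the role of the parameter $r \in \Xi = \{r : r_{ij} \in [c_{ij}, c_{ij}^{-1}]\}$ to absorb the mismatch at each iteration step, which is why the contraction hypothesis is required \emph{uniformly} over $r \in \Xi$. Your sketch flags this as the key difficulty but misidentifies the mechanism that resolves it.
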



\section{Height Function}
\label{sec: Height Function}
The section is devoted towards the construction of a family of height functions $\{f_\tau\}$ on $\widetilde{\X}$ such that the set $\{f_\tau = \infty\}$ equals $\X \subset \widetilde{\X}$.

\begin{defn}
    For each $1 \leq l \leq d-1$, we define the $l$-th critical exponent $\zeta_l(\mu)$ of the measure $\mu$ as the supremum of all $\gamma \geq 0$ for which there exists a constant $C_{\gamma,i}' > 0$ such that, for every $v = v_1 \wedge \cdots \wedge v_l \in V_l$ with $\|v\| = 1$ and $r \in \Xi$, the following inequality holds:
\begin{align*}
   \int_{\Mat} \frac{1}{\|\pi_{l+}(u(\theta)v)\|^{\gamma}} \, d\mur(\theta) < C_{\gamma,l}'.
\end{align*}
\end{defn}

We will need the following result from \cite{aggarwalghoshsingular}. 
\begin{prop}[{\cite[Prop.~3.1, Lemma~4.1, 4.4 and 4.5]{aggarwalghoshsingular}}]
\label{Critical Exponent is positive}
    For all $1 \leq l \leq d-1$, we have $\zeta_l(\mu) >0$. Moreover, the critical exponent $\zeta_l(\mu)$ satisfies the following lower bound in the following special cases:
    \begin{itemize}
        \item If $\Kcal= \M_{m \times n}([0,1])$, then 
   $$
   \zeta_l(\mu) \geq  \begin{cases}
       \frac{m}{l} \quad \text{if } l \leq m, \\
       \frac{n}{m+n-l} \quad \text{if } m < l \leq d-1.
   \end{cases}
   $$
   \item  If $n=1$, then
   $$\zeta_l(\mu)\geq \min \left\{\sum_{i \in I}s_{i1}: \#I= d-l \right\} $$  

   \item If $m=1$, then
   $$\zeta_l(\mu)\geq \min \left\{\sum_{i \in I}s_{1i}: \#I= l \right\}.$$
    \end{itemize}
    
\end{prop}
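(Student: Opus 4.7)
The plan is to reduce the positivity of $\zeta_l(\mu)$ to a polynomial non-concentration estimate for the measures $\mur$, then extract the explicit bounds in each special case by analyzing how the extremal decomposable vectors interact with the product structure of $\mu$.

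First I would apply the layer-cake formula to write
\[
\int_{\Mat} \frac{d\mur(\theta)}{\|\pi_{l+}(u(\theta)v)\|^{\gamma}} = \gamma \int_0^\infty s^{-\gamma-1}\, \mur\bigl(\bigl\{\theta : \|\pi_{l+}(u(\theta)v)\| < s\bigr\}\bigr)\, ds,
\]
so that finiteness with a constant uniform over decomposable $v$ with $\|v\|=1$ and $r \in \Xi$ reduces to a sublevel-set estimate $\mur(\{\|\pi_{l+}(u(\theta)v)\| < s\}) \leq C s^{\alpha}$ for some $\alpha > 0$. Each component $(u(\theta)v)_I$ is a polynomial in the entries of $\theta$ of degree at most $\min\{l, n\}$, since $u(\theta)$ acts on $\bigwedge^l \R^d$ by polynomials of this degree. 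I would then invoke the fact that self-similar measures with the OSC are absolutely friendly, and hence $(C_0,\alpha_0)$-good with respect to the family of polynomials of bounded degree in the sense of Kleinbock--Lindenstrauss--Weiss; a Fubini-type argument extends this goodness to the product $\mu = \otimes_{ij} \mu_{ij}$, and the compactness of $\Xi$ transfers it to each $\mur$ with uniform constants.

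The main obstacle is supplying the companion lower bound
\[
\sup_{\theta \in \mathrm{supp}\, \mur} \|\pi_{l+}(u(\theta)v)\| \geq c\|v\|,
\]
uniformly in decomposable $v$ and $r \in \Xi$. For this I would argue that for every nonzero decomposable $v$ the zero locus $\{\theta : \pi_{l+}(u(\theta)v) = 0\}$ is a proper algebraic subvariety of $\Mat$, whereas $\mathrm{supp}\,\mu$ is Zariski dense in $\Mat$ because each factor $\Kcal_{ij}$ has positive Hausdorff dimension, hence is infinite, hence Zariski dense in $\R$. Compactness of the space of decomposable unit vectors in $V_l$ together with the continuity of the sup in $v$ then yields a uniform $c > 0$. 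Combining this with $(C_0, \alpha_0)$-goodness produces the required sublevel-set bound and hence $\zeta_l(\mu) > 0$ in general.

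For the explicit bounds I would compute the sublevel-set exponent case by case. When $\Kcal = \Mat \cap [0,1]^{mn}$, $\mu$ is Lebesgue and the extremal $v$ is (up to a basis change) $\bfe_{m+1} \wedge \cdots \wedge \bfe_{m+l}$ for $l \leq \min\{m,n\}$, so that $\pi_{l+}(u(\theta)v)$ is the tuple of $l \times l$ minors of the first $l$ columns of $\theta$; a standard Selberg/Siegel-type integral shows convergence exactly for $\gamma < m/l$. The range $l > m$ is symmetric under a change of variables swapping the roles of $m$ and $n$, producing the exponent $n/(m+n-l)$. For $n = 1$, the polynomials $(u(\theta)v)_I$ are linear in $\theta \in \R^m$, so their common zero locus is an affine subspace; in the worst case it is a coordinate subspace determined by the vanishing of $d - l$ of the coordinates $\theta_i$, and the product structure $\mu = \otimes_i \mu_{i1}$ bounds the $\mu$-mass of an $s$-tube around it by $\prod_{i \in I} s^{s_{i1}}$ for some $I$ with $\#I = d - l$. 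Minimizing over such $I$ yields $\zeta_l(\mu) \geq \min_{\#I = d-l} \sum_{i \in I} s_{i1}$, and the $m = 1$ case follows by an analogous argument with rows and columns interchanged.
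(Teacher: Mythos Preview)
The paper does not prove this proposition; it is quoted verbatim from \cite[Prop.~3.1, Lem.~4.1, 4.4, 4.5]{aggarwalghoshsingular} and invoked as a black box. So there is no proof in the present paper to compare against, and the question is only whether your outline is internally sound.

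Your argument for $\zeta_l(\mu)>0$ is essentially correct and is the standard route: layer-cake, $(C,\alpha)$-goodness of self-similar OSC measures for polynomials of bounded degree (via Kleinbock--Lindenstrauss--Weiss), and a uniform lower bound on $\sup_{\theta\in\supp\mur}\|\pi_{l+}(u(\theta)v)\|$ obtained from Zariski density of $\Kcal$ and compactness of the Grassmannian and of $\Xi$. One point to make explicit is why the zero locus $\{\theta:\pi_{l+}(u(\theta)v)=0\}$ is a \emph{proper} subvariety for every nonzero decomposable $v$; this amounts to the (true) statement that every $l$-plane can be moved by some $u(\theta)$ to have nonzero projection onto the top $g_t$-weight space, but it deserves a line.

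The explicit bounds, however, are only sketched, and there are genuine gaps. In the Lebesgue case you single out the vector $\bfe_{m+1}\wedge\cdots\wedge\bfe_{m+l}$ and appeal to a ``standard Selberg/Siegel-type integral'', but you never show this $v$ is extremal: the definition of $\zeta_l(\mu)$ requires the integral to be \emph{uniformly} bounded over all decomposable unit $v$, so you must argue that no other $v$ forces a smaller exponent. In the $n=1$ case you assert that the worst sublevel set is an $s$-tube around a coordinate subspace of codimension $d-l$, with mass $\prod_{i\in I}s^{s_{i1}}$; but the affine $(l-1)$-plane $\{\theta: w_l+c_l\theta\in\sspan(w_1,\dots,w_{l-1})\}$ need not be coordinate-aligned, and for a general oblique subspace the product structure of $\mu$ does not immediately give you the product exponent. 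What one actually needs is that, after projecting to a suitable set of $d-l$ coordinate directions transverse to the subspace, the pushforward measure still controls the tube; making this precise and uniform in $v$ is exactly the work done in the cited lemmas and is not a one-liner. The $m=1$ case inherits the same issue.
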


For the remainder of this section, we fix a sequence $\eta_1, \ldots, \eta_{d-1} \in \mathbb{R}$ such that the following holds:
\begin{align*}
    0 < \eta_i &< \zeta_i(\mu), \quad \text{for } 1 \leq i \leq d-1, \\
    \frac{1}{\eta_{i-j}} + \frac{1}{\eta_{i+j}} &< \frac{2}{\eta_i}, \quad \text{for all } 1 \leq i \leq d-1 \text{ and } j \leq \min\{i, d-i\},
\end{align*}
where we define $\frac{1}{\eta_0} = \frac{1}{\eta_d} := 0$. Additionally, we define the following:
\begin{align*}
    \eta &= \min_{1 \leq l \leq d} w_l \eta_l, \\
    \bfn &= (\eta, \eta_1, \ldots, \eta_{d-1}), \\
    C_{\bfn} &= \max_{1 \leq l \leq d-1} C_{\eta_l, l}'.
\end{align*}

The constants chosen above satisfy the following. 
\begin{prop}[{\cite[Prop.~5.1]{aggarwalghoshsingular}}]
    \label{prop: Critical Exponent representation}
    For all $1 \leq l \leq d-1$, $r \in \Xi$, $t>1$ and $v= v_1 \wedge \cdots \wedge v_l \in V_l \setminus \{0\}$, the following holds
    \begin{align*}
         \int_{\Mat}  \|g_{t} u(x)v\|^{- \eta_l} \, d\mur(x) \leq C_{\bfn} t^{-\eta } \|v\|^{-\eta_l }.
    \end{align*}
\end{prop}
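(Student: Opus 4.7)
The plan is to reduce the integral over $\Mat$ to the defining inequality of the critical exponent $\zeta_l(\mu)$, exploiting two features: (i) the unipotent $u(x)$ acts trivially on $V_l^+$, and (ii) the diagonal $g_t$ expands $V_l^+$ by at least $t^{w_l}$. The argument splits cleanly into a pointwise lower bound for $\|g_t u(x) v\|$ and a single application of Proposition~\ref{Critical Exponent is positive}.

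First I would establish the pointwise estimate
\[
\|g_t u(x) v\| \;\geq\; t^{w_l}\, \|\pi_{l+}(u(x) v)\|.
\]
Write $u(x)v = \pi_{l+}(u(x)v) + \pi_{l-}(u(x)v)$, note that $g_t$ preserves the decomposition $V_l = V_l^+ \oplus V_l^-$, and observe that the sup norm defined in \eqref{eq: def ||||} satisfies $\|w_+ + w_-\| \geq \|w_+\|$ whenever $w_\pm \in V_l^\pm$, since the two summands live in disjoint coordinate blocks. It remains to show $\|g_t w\| \geq t^{w_l} \|w\|$ for every $w \in V_l^+$; since the basis vectors $\bfe_I$ spanning $V_l^+$ are $g_t$-eigenvectors with eigenvalues $t^{\lambda_I}$ where $\lambda_I \geq w_l$ (by definition of $w_l$ as the minimal positive weight on $V_l^+$), the sup norm bound is immediate.

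Next I would homogenize. Since $v = v_1 \wedge \cdots \wedge v_l$ is decomposable, so is $v/\|v\| = (v_1/\|v\|) \wedge v_2 \wedge \cdots \wedge v_l$, and $\|v/\|v\|\| = 1$. Applying the critical exponent estimate with $\gamma = \eta_l < \zeta_l(\mu)$ to this rescaled vector and using the definition of $C_\bfn$,
\[
\int_{\Mat} \|\pi_{l+}(u(x)(v/\|v\|))\|^{-\eta_l} \, d\mur(x) \;\leq\; C_{\eta_l, l}' \;\leq\; C_\bfn,
\]
and homogeneity of $\pi_{l+} \circ u(x)$ then yields
\[
\int_{\Mat} \|\pi_{l+}(u(x)v)\|^{-\eta_l} \, d\mur(x) \;\leq\; C_\bfn \|v\|^{-\eta_l}.
\]

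Finally I would combine the two pieces. Raising the pointwise inequality to the power $-\eta_l$ and integrating,
\[
\int_{\Mat} \|g_t u(x) v\|^{-\eta_l}\, d\mur(x) \;\leq\; t^{-w_l \eta_l} \int_{\Mat} \|\pi_{l+}(u(x) v)\|^{-\eta_l}\, d\mur(x) \;\leq\; C_\bfn\, t^{-w_l \eta_l}\, \|v\|^{-\eta_l}.
\]
Since $t > 1$ and $w_l \eta_l \geq \eta$ by the definition of $\eta = \min_l w_l \eta_l$, one has $t^{-w_l \eta_l} \leq t^{-\eta}$, completing the proof. There is no real obstacle here: the entire argument is bookkeeping around the two structural facts above, and the only non-trivial input — positivity and actual value of $\zeta_l(\mu)$ — is taken from Proposition~\ref{Critical Exponent is positive}. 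The one subtle point to verify in writing is that the decomposable structure of $v$ is preserved under scaling by $1/\|v\|$, so that the hypothesis of the critical exponent definition is genuinely applicable.
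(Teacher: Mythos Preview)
Your proof is correct and is the natural argument; the paper itself does not supply a proof of this proposition, simply citing it as \cite[Prop.~5.1]{aggarwalghoshsingular}. One small remark: the appeal to Proposition~\ref{Critical Exponent is positive} is unnecessary here, since the existence of the constant $C_{\eta_l,l}'$ follows directly from the definition of $\zeta_l(\mu)$ together with the standing assumption $\eta_l < \zeta_l(\mu)$.
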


For every $0 \leq l \leq d$, we define $\varphi_{l}: \X \rightarrow \R$ as
    \begin{align*}
    \varphi_{l}(\Lambda) =  \max \{\|\Lambda_l\|^{-1}: \Lambda_l \in P(\Lambda), \mathrm{ rank}(\Lambda_l)=l  \},
    \end{align*}
    where $\|.\|$ is defined as in \eqref{eq: def lattice covol}. Also, for $0 \leq l \leq d$, define $\tilde{\varphi}_l: \widetilde{\X} \rightarrow (0,\infty)$ as $\tilde{\varphi}_l = \varphi_l \circ \pi$. Then it is easy to see that $\Tilde{\varphi}_0 \equiv \Tilde{\varphi}_{d} \equiv 1$.  We also define $\psi: \widetilde{\X} \rightarrow (0, \infty]$ as
    \begin{align*}
        \psi(\widetilde{\Lambda}) = \max_{v \in \widetilde{\Lambda}} \|v\|^{-1} = \max \{\|v\|^{-1}: v \in \R^d, \widetilde{\Lambda }= [I_{d},v]\pi(\widetilde{\Lambda})\},
    \end{align*}
    for all $\widetilde{\Lambda} \in \widetilde{\X}$. Note that $\psi(\widetilde{\Lambda}) = \infty$ if and only if $\widetilde{\Lambda} \in \X$.

\begin{prop}
    \label{prop: Contraction psi}
    For all $t >1$, there exists $\xi(t) \geq 1$, such that the following holds for all $r \in \Xi$ and $\widetilde{\Lambda} \in \widetilde{\X}$,
    $$
    \int_{\Mat} \psi^{\eta_1}(g_tu(x)\widetilde{\Lambda}) \, d\mur(x ) \leq C_{\bfn} t^{- \eta } \psi^{\eta_1}(\widetilde{\Lambda}) + \xi(t) \tilde{\varphi}_1^{\eta_1}(\widetilde{\Lambda}).
    $$
\end{prop}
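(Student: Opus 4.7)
The plan is to reduce the claim to a pointwise bound which can then be integrated against $\mur$, invoking Proposition~\ref{prop: Critical Exponent representation} (with $l=1$) to handle the homogeneous term.

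If $\widetilde{\Lambda}\in\X$, then $0\in\widetilde{\Lambda}$, so $\psi(\widetilde{\Lambda})=\infty$ and the inequality is trivial. Otherwise, fix a shortest vector $w\in\widetilde{\Lambda}$, so $\|w\|=\psi(\widetilde{\Lambda})^{-1}$, and set $\Lambda_{0}:=\pi(\widetilde{\Lambda})$ and $\lambda_{1}:=\lambda_{1}(\Lambda_{0})=\tilde{\varphi}_{1}(\widetilde{\Lambda})^{-1}$, so that $\widetilde{\Lambda}=\Lambda_{0}+w$. The central claim is the pointwise inequality
\begin{align*}
    \psi^{\eta_{1}}(g_{t}u(x)\widetilde{\Lambda})\;\leq\;\|g_{t}u(x)w\|^{-\eta_{1}}+\xi(t)\,\tilde{\varphi}_{1}^{\eta_{1}}(\widetilde{\Lambda}),\qquad\text{for every } x\in\supp(\mur),
\end{align*}
where $\xi(t):=(2C_{0}t^{b_{1}})^{\eta_{1}}$ and $C_{0}\geq 1$ bounds $\|u(x)^{\pm1}\|$ uniformly over $\supp(\mur)$, independently of $r\in\Xi$ (such a $C_{0}$ exists because $\Xi$ and $\Kcal$ are bounded).

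To verify the pointwise bound, write $\psi^{\eta_{1}}(g_{t}u(x)\widetilde{\Lambda})=\max_{v\in\Lambda_{0}}\|g_{t}u(x)(w+v)\|^{-\eta_{1}}$. The term $v=0$ contributes $\|g_{t}u(x)w\|^{-\eta_{1}}$ directly. For $v\in\Lambda_{0}\setminus\{0\}$, the crucial geometric estimate is $\|w+v\|\geq\lambda_{1}/2$: if $\|w\|<\lambda_{1}/2$ the reverse triangle inequality gives $\|w+v\|\geq\|v\|-\|w\|\geq\lambda_{1}-\lambda_{1}/2$, while if $\|w\|\geq\lambda_{1}/2$ then $\|w+v\|\geq\|w\|\geq\lambda_{1}/2$ because $w$ is a shortest vector of $\widetilde{\Lambda}$. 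Combined with the elementary estimate $\|g_{t}u(x)y\|\geq C_{0}^{-1}t^{-b_{1}}\|y\|$ (valid in the max-norm because the smallest diagonal entry of $g_{t}$ is $t^{-b_{1}}$ and $\|u(x)^{-1}\|\leq C_{0}$), this yields $\|g_{t}u(x)(w+v)\|^{-\eta_{1}}\leq(2C_{0}t^{b_{1}})^{\eta_{1}}\tilde{\varphi}_{1}^{\eta_{1}}(\widetilde{\Lambda})$ uniformly in $v\neq 0$, and the bound $\max(a,b)\leq a+b$ completes the pointwise inequality.

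Integrating against the probability measure $\mur$ and applying Proposition~\ref{prop: Critical Exponent representation} with $l=1$ and $v=w$ gives
\begin{align*}
    \int_{\Mat}\|g_{t}u(x)w\|^{-\eta_{1}}\,d\mur(x)\;\leq\;C_{\bfn}\,t^{-\eta}\,\|w\|^{-\eta_{1}}\;=\;C_{\bfn}\,t^{-\eta}\,\psi^{\eta_{1}}(\widetilde{\Lambda}),
\end{align*}
which, combined with the pointwise bound, yields the desired inequality. The principal design choice—and the only delicate point—is taking $w$ to be a \emph{shortest} vector of $\widetilde{\Lambda}$: for an arbitrary coset representative, $\|w+v\|$ could be as small as $\|w\|$, producing only a term proportional to $\psi^{\eta_{1}}$ instead of the desired bounded correction $\xi(t)\tilde{\varphi}_{1}^{\eta_{1}}$.
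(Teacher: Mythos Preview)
Your proof is correct and follows essentially the same approach as the paper: both arguments fix a shortest vector of $\widetilde{\Lambda}$, use the triangle inequality together with the first successive minimum of $\pi(\widetilde{\Lambda})$ to show that every other coset representative has norm at least $\lambda_1/2$, and then integrate the resulting pointwise bound via Proposition~\ref{prop: Critical Exponent representation}. The only cosmetic difference is that the paper phrases the pointwise step as a dichotomy (either $\psi(g_tu(x)\widetilde{\Lambda})\le\xi'(t)\tilde{\varphi}_1(\widetilde{\Lambda})$, or the minimizer after applying $g_tu(x)$ must coincide with the original shortest vector), whereas you bound each candidate $w+v$ directly and use $\max\le$ sum; both yield the same inequality with the same constants up to the specific form of $\xi(t)$.
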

\begin{proof}
    Fix $t>1$ and $\widetilde{\Lambda} \in \widetilde{\X}$. Let $\xi'(t)= 2\|\{g_{t} u(x): x \in  \bigcup_{r \in \Xi} \supp(\mur)\}\|$. Let $v_0 \in \widetilde{\Lambda}$ be a vector in $\widetilde{\Lambda}$ such that $\psi(\widetilde{\Lambda})= \|v_0\|^{-1}$. Claim that for all $x \in  \bigcup_{r \in \Xi} \supp(\mur)$
    \begin{align}
        \label{eq: a b c 1}
        \psi( g_tu(x)\widetilde{\Lambda}) \leq \max \left\{ \frac{1}{\|g_tu(x)v_0\|} , \  \xi'(t) \tilde{\varphi}_1(\widetilde{\Lambda}) \right\}.
    \end{align}
   To see this claim, note that for any \(x \in \bigcup_{r \in \Xi} \supp(\mu_r)\), if 
\[
\psi(g_t u(x)\widetilde{\Lambda}) > \xi'(t)\, \tilde{\varphi}_1(\widetilde{\Lambda}),
\]
then there exists a vector \(v_x \in \widetilde{\Lambda}\) such that 
\[
\|g_t u(x) v_x\|^{-1} = \psi(g_t u(x)\widetilde{\Lambda}) > \xi'(t)\, \tilde{\varphi}_1(\widetilde{\Lambda}).
\]
This implies that
    \begin{align}
    \label{eq: abcdef 1}
        \|v_x\| \leq \|(g_tu(x))^{-1}\|. \|g_t u(x) v_x\| <  \frac{\xi'(t)}{2} \frac{1}{\xi'(t) \tilde{\varphi}_1(\widetilde{\Lambda}) } = \frac{1}{2 \tilde{\varphi}_1(\widetilde{\Lambda})}.
    \end{align}
    Thus for all $w \in \widetilde{\Lambda} \setminus \{v_x\}$, we have
    \begin{align*}
        \|w\| &\geq \|w-v_x\| - \|v_x\| \\
        &\geq \min\{ \|w'\|: w' \in \pi(\widetilde{\Lambda}) \setminus \{0\} \} - \|v_x\|, \quad \text{ since  $w-v_x \in \pi(\widetilde{\Lambda})$ and $w \neq v_x$} \\
        &\geq \frac{1}{ \tilde{\varphi}_1(\widetilde{\Lambda})} - \frac{1}{2 \tilde{\varphi}_1(\widetilde{\Lambda})} \quad \text{ using \eqref{eq: abcdef 1}}\\
        &= \frac{1}{2 \tilde{\varphi}_1(\widetilde{\Lambda})} \\
        &> \|v_x\| \quad \text{ using \eqref{eq: abcdef 1}}.
    \end{align*}
    This means that $v_x$ is the shortest vector of $\widetilde{\Lambda}$, hence must equal $v_0$. Thus $$\psi(g_tu(x)\widetilde{\Lambda})= \|g_tu(x) v_0\|^{-1}.$$ This proves the claim.

    Let $\xi(t)= (\xi'(t))^{\eta_1}$. Using \eqref{eq: a b c 1}, we have
    \begin{align*}
        \int_{\Mat} \psi^{\eta_1}(g_tu(x)\widetilde{\Lambda}) \, d\mur(x )  &\leq \int_{\Mat} \frac{1}{\|g_tu(x)v_0\|^{\eta_1}} \, d\mur(x) + \xi(t) \tilde{\varphi}_1^{\eta_1}(\widetilde{\Lambda}) \\
        &\leq C_{\bfn} t^{-\eta} \frac{1}{\|v_0\|^{\eta_1}} + \xi(t) \tilde{\varphi}_1^{\eta_1}(\widetilde{\Lambda}) \\
        &= C_{\bfn} t^{- \eta } \psi^{\eta_1}(\widetilde{\Lambda}) + \xi(t) \tilde{\varphi}_1^{\eta_1}(\widetilde{\Lambda}),
    \end{align*}
     where penultimate inequality follows from Proposition \ref{prop: Critical Exponent representation}. Hence, the proposition follows. 
\end{proof}

\begin{prop}
        \label{prop: contraction varphi}
         For all $t >1$, there exists $\xi(t) \geq 1$, such that the following holds for all $1 \leq l \leq d-1$, $r \in \Xi$ and $\widetilde{\Lambda} \in \widetilde{\X}$ 
        \begin{align*}
            \int_{\Mat} \tilde{\varphi}_{l}^{ \eta_l  }(g_{t} u(x) \widetilde{\Lambda}) \, d\mur({x})
            &\leq C_{\bfn} t^{- \eta } \tilde{\varphi}_l^{ \eta_l }(\widetilde{\Lambda}) + \xi(t) \left( \max_{1 \leq j \leq \min\{l, d-l\}}   \tilde{\varphi}_{l-j}(\widetilde{\Lambda})  \tilde{\varphi}_{l+j}(\widetilde{\Lambda}) \right)^{ \eta_l/2 }.
        \end{align*}
    \end{prop}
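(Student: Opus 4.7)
The plan is to follow the same two-step template as in Proposition~\ref{prop: Contraction psi}, with the additional complication that the rank-$l$ primitive sublattice realising $\tilde{\varphi}_l$ may switch when $g_tu(x)$ is applied; this switching will be controlled via Lemma~\ref{lem EMM}. Fix $t>1$ and $\widetilde{\Lambda}\in\widetilde{\X}$, choose a rank-$l$ primitive sublattice $\Lambda_0 \in P(\pi(\widetilde{\Lambda}))$ with $\tilde{\varphi}_l(\widetilde{\Lambda}) = \|\Lambda_0\|^{-1} = \|v_{\Lambda_0}\|^{-1}$, and set $\xi'(t) = 2\|\{g_tu(x): x\in\bigcup_{r\in\Xi}\supp(\mur)\}\|$. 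The first step is to establish the pointwise bound
\begin{align*}
    \tilde{\varphi}_l(g_tu(x)\widetilde{\Lambda}) \leq \max\left\{\|g_tu(x)v_{\Lambda_0}\|^{-1},\ \tfrac{\xi'(t)\sqrt{D}}{2}\max_{1\leq j\leq\min\{l,d-l\}}\bigl(\tilde{\varphi}_{l-j}(\widetilde{\Lambda})\tilde{\varphi}_{l+j}(\widetilde{\Lambda})\bigr)^{1/2}\right\}
\end{align*}
for every $x$ in the integration support, where $D$ is the constant from Lemma~\ref{lem EMM}.

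To prove this bound, let $\tilde{\Lambda}' \in P(\pi(\widetilde{\Lambda}))$ of rank $l$ be a sublattice with $\tilde{\varphi}_l(g_tu(x)\widetilde{\Lambda}) = \|g_tu(x)\tilde{\Lambda}'\|^{-1}$. If $\tilde{\Lambda}' = \Lambda_0$, the first case holds trivially. Otherwise, set $\Lambda_1 = \Lambda_0 \cap \tilde{\Lambda}'$ and let $\Lambda_2$ be the primitive closure of $\Lambda_0 + \tilde{\Lambda}'$ in $\pi(\widetilde{\Lambda})$; these have ranks $l-j$ and $l+j$ respectively for some $1\leq j \leq \min\{l,d-l\}$. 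The optimality of $\tilde{\Lambda}'$ forces $\|g_tu(x)\tilde{\Lambda}'\| \leq \|g_tu(x)\Lambda_0\|$. Applying Lemma~\ref{lem EMM} to $g_tu(x)\Lambda_0$ and $g_tu(x)\tilde{\Lambda}'$ inside $g_tu(x)\pi(\widetilde{\Lambda})$ then yields
\begin{align*}
    \|g_tu(x)\Lambda_1\|\|g_tu(x)\Lambda_2\| \leq D\|g_tu(x)\Lambda_0\|\|g_tu(x)\tilde{\Lambda}'\| \leq D\|g_tu(x)\tilde{\Lambda}'\|^2.
\end{align*}
Taking reciprocals, using $\|g_tu(x)\Lambda_i\|^{-1} \leq \tilde{\varphi}_{l\pm j}(g_tu(x)\widetilde{\Lambda})$ from the definition of $\tilde{\varphi}$, then applying the log-Lipschitz bound $\tilde{\varphi}_{l\pm j}(g_tu(x)\widetilde{\Lambda})\leq(\xi'(t)/2)\tilde{\varphi}_{l\pm j}(\widetilde{\Lambda})$ (which follows from $\|(g_tu(x))^{-1}\|\leq\xi'(t)/2$ acting on $V_{l\pm j}$), and finally extracting the square root, gives the second case.

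With the pointwise bound in hand, raise both sides to the power $\eta_l$, use the elementary inequality $\max(a,b)^{\eta_l}\leq a^{\eta_l}+b^{\eta_l}$, and integrate against $d\mur(x)$: the first summand $\int\|g_tu(x)v_{\Lambda_0}\|^{-\eta_l}\,d\mur(x)$ is bounded by Proposition~\ref{prop: Critical Exponent representation} applied to $v = v_{\Lambda_0}$, producing the term $C_{\bfn}t^{-\eta}\|v_{\Lambda_0}\|^{-\eta_l} = C_{\bfn}t^{-\eta}\tilde{\varphi}_l^{\eta_l}(\widetilde{\Lambda})$; the second summand is independent of $x$ and integrates to itself, contributing the desired term with $\xi(t) = (\xi'(t)\sqrt{D}/2)^{\eta_l}$. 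The main subtlety is the EMM step: combining Lemma~\ref{lem EMM} with the optimality-based inequality $\|g_tu(x)\Lambda_0\|\|g_tu(x)\tilde{\Lambda}'\|\leq\|g_tu(x)\tilde{\Lambda}'\|^2$ to replace the product $\|\Lambda_0\|\|\tilde{\Lambda}'\|$ by a square of $\|\tilde{\Lambda}'\|$ is precisely what causes each $\tilde{\varphi}_{l\pm j}$ factor to appear only with the exponent $\eta_l/2$, matching the right-hand side of the proposition.
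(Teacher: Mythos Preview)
Your overall template---an EMM-style pointwise dichotomy followed by integration against Proposition~\ref{prop: Critical Exponent representation}---is the standard route, and is what lies behind the paper's one-line citation of \cite[Prop.~5.2]{aggarwalghoshsingular}. The second step (raise to $\eta_l$, bound $\max(A,B)^{\eta_l}\le A^{\eta_l}+B^{\eta_l}$, integrate the $A$-term via Proposition~\ref{prop: Critical Exponent representation}) is fine.

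The gap is in the displayed chain
\[
\|g_tu(x)\Lambda_1\|\,\|g_tu(x)\Lambda_2\| \;\leq\; D\,\|g_tu(x)\Lambda_0\|\,\|g_tu(x)\tilde{\Lambda}'\| \;\leq\; D\,\|g_tu(x)\tilde{\Lambda}'\|^2.
\]
The second inequality would require $\|g_tu(x)\Lambda_0\|\le\|g_tu(x)\tilde{\Lambda}'\|$, which is the \emph{opposite} of the optimality of $\tilde{\Lambda}'$ that you just invoked. With the inequality in the correct direction you only obtain an upper bound on $\|g_tu(x)\Lambda_0\|^{-1}$, not on $\tilde{\varphi}_l(g_tu(x)\widetilde{\Lambda})=\|g_tu(x)\tilde{\Lambda}'\|^{-1}$, so the argument as written does not close.

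The repair is to run Lemma~\ref{lem EMM} and the optimality \emph{before} pushing forward by $g_tu(x)$, using the optimality of $\Lambda_0$ rather than of $\tilde{\Lambda}'$. Since $\Lambda_0$ minimises the original norm, $\|\Lambda_0\|\le\|\tilde{\Lambda}'\|$; combined with Lemma~\ref{lem EMM} applied to $\Lambda_0,\tilde{\Lambda}'\in P(\pi(\widetilde{\Lambda}))$ this gives
\[
\|\Lambda_1\|\,\|\Lambda_2\|\;\le\; D\,\|\Lambda_0\|\,\|\tilde{\Lambda}'\|\;\le\; D\,\|\tilde{\Lambda}'\|^2,
\]
hence $\|\tilde{\Lambda}'\|^{-1}\le\sqrt{D}\,(\tilde{\varphi}_{l-j}(\widetilde{\Lambda})\,\tilde{\varphi}_{l+j}(\widetilde{\Lambda}))^{1/2}$. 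One log-Lipschitz application then yields
\[
\|g_tu(x)\tilde{\Lambda}'\|^{-1}\;\le\;\tfrac{\xi'(t)}{2}\,\|\tilde{\Lambda}'\|^{-1}\;\le\;\tfrac{\xi'(t)\sqrt{D}}{2}\,(\tilde{\varphi}_{l-j}(\widetilde{\Lambda})\,\tilde{\varphi}_{l+j}(\widetilde{\Lambda}))^{1/2},
\]
which is exactly your claimed $B$. With this correction, the rest of your proof goes through verbatim.
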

\begin{proof}
    Using the fact that $\tilde{\varphi}_{l}^{ \eta_l  }(g_{t} u(x) \widetilde{\Lambda})= \varphi_l(g_tu(x) \Lambda)$, where $\Lambda = \pi(\widetilde{\Lambda})$, the proposition follows immediately from \cite[Prop.~5.2]{aggarwalghoshsingular}.
\end{proof}

Let us define
$$
\an = \min\left\{ 1- \frac{\eta_i}{2}\left(  \frac{1}{\eta_{i-j}}+ \frac{1}{\eta_{i+j}} \right): 1 \leq i \leq d-1, 1 \leq j \leq \min\{i, d-i\} \right\},
$$
where we set $1/\eta_0 = 1/\eta_d = 0$. For $0<\e<1$, we define the function $f_{\e, \bfn}: \widetilde{\X} \rightarrow \R$ as 
    \begin{align*}
    	f_{\epsilon, \bfn}(\widetilde{\Lambda}) = \e^{-2} + \e^{-1} \left(\sum_{l=1}^{d-1}   \tilde{\varphi}_{l}^{\eta_l }(\widetilde{\Lambda}) \right) +  \psi^{\eta_1}(\widetilde{\Lambda}).
    \end{align*}

The definition of $f_{\epsilon,\bfn}$ is motivated from \cite[Section~5]{Shi20}.

\begin{prop}\label{thm: height function}
    For all $t > 1$, there exists $b= b(t, \bfn) \geq 0$ and $0< \e = \e(t,\bfn )< 1$ such that the following holds for all $\widetilde{\Lambda} \in \widetilde{\X}$ and $ r \in \Xi$ 
    \begin{align}
        \label{eq: height fn contraction}
        \int_{\Mat} f_{\e,\bfn}(g_tu(x)\widetilde{\Lambda}) \, d\mur(x) \leq 3C_{\bfn} t^{-\eta} f_{\e, \bfn}(\Lambda) + b.
    \end{align}
\end{prop}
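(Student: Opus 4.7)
The plan is to integrate $f_{\epsilon, \bfn}$ termwise against $\mur$ using Propositions~\ref{prop: Contraction psi} and~\ref{prop: contraction varphi}, and then absorb the resulting cross-terms via a weighted three-term Young inequality with a free scaling parameter. The crucial leverage is the strict inequality $\tfrac{1}{\eta_{i-j}}+\tfrac{1}{\eta_{i+j}}<\tfrac{2}{\eta_i}$ built into the choice of $\bfn$, which supplies $\an>0$ and hence a genuine exponent margin.

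First I would expand
\[
\int_{\Mat} f_{\epsilon,\bfn}(g_t u(x)\widetilde\Lambda)\, d\mur(x)
= \epsilon^{-2} + \epsilon^{-1}\sum_{l=1}^{d-1}\int \tilde\varphi_l^{\eta_l}(g_t u(x)\widetilde\Lambda)\, d\mur + \int \psi^{\eta_1}(g_t u(x)\widetilde\Lambda)\, d\mur,
\]
and apply the two propositions to each summand. Doing so produces the main contracted terms $C_{\bfn}t^{-\eta}\epsilon^{-1}\tilde\varphi_l^{\eta_l}(\widetilde\Lambda)$ and $C_{\bfn}t^{-\eta}\psi^{\eta_1}(\widetilde\Lambda)$, the cross products $\xi(t)(\tilde\varphi_{l-j}\tilde\varphi_{l+j})^{\eta_l/2}(\widetilde\Lambda)$ for $1\leq l\leq d-1$ and $1\leq j\leq \min\{l,d-l\}$, together with a single residual $\xi(t)\tilde\varphi_1^{\eta_1}(\widetilde\Lambda)$ coming from Proposition~\ref{prop: Contraction psi}. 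Setting $p_1=\tfrac{\eta_l}{2\eta_{l-j}}$, $p_2=\tfrac{\eta_l}{2\eta_{l+j}}$, and $p_3=1-p_1-p_2\geq \an>0$ (using the conventions $1/\eta_0=1/\eta_d:=0$ and $\tilde\varphi_0\equiv\tilde\varphi_d\equiv 1$ at the boundary), the scaled weighted AM--GM inequality gives, for every $\delta>0$,
\[
(\tilde\varphi_{l-j}\tilde\varphi_{l+j})^{\eta_l/2} \;\leq\; p_1\delta\,\tilde\varphi_{l-j}^{\eta_{l-j}} + p_2\delta\,\tilde\varphi_{l+j}^{\eta_{l+j}} + p_3\,\delta^{-(p_1+p_2)/p_3}.
\]
Choosing $\delta=t^{-\eta}/(K\xi(t))$ with $K=K(d,\bfn)$ large makes each $\xi(t)p_i\delta$ uniformly smaller than $C_{\bfn}/(2d)\cdot t^{-\eta}$, so that summing over $l$ and $j$ absorbs the first two groups of terms into $\epsilon^{-1}C_{\bfn}t^{-\eta}\sum_l\tilde\varphi_l^{\eta_l}(\widetilde\Lambda)$, while the third terms are constants depending only on $t$ and $\bfn$ and contribute to~$b$.

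For the residual $\xi(t)\tilde\varphi_1^{\eta_1}(\widetilde\Lambda)$ I then choose $\epsilon=\epsilon(t,\bfn)\in(0,1)$ small enough that $\xi(t)\leq\epsilon^{-1}C_{\bfn}t^{-\eta}$, so that $\xi(t)\tilde\varphi_1^{\eta_1}\leq\epsilon^{-1}C_{\bfn}t^{-\eta}\tilde\varphi_1^{\eta_1}$. Collecting everything, the coefficient of each $\tilde\varphi_l^{\eta_l}(\widetilde\Lambda)$ on the left is at most $3C_{\bfn}t^{-\eta}\epsilon^{-1}$, the coefficient of $\psi^{\eta_1}(\widetilde\Lambda)$ is $C_{\bfn}t^{-\eta}\leq 3C_{\bfn}t^{-\eta}$, and the standalone $\epsilon^{-2}$ is compared against $3C_{\bfn}t^{-\eta}\epsilon^{-2}$ with the excess folded into $b=b(t,\bfn)$. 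The main obstacle is the bookkeeping needed to coordinate the two free parameters: $\delta$ must be fixed first, using the $\an$-margin to beat the large factor $\xi(t)$ via $\delta\xi(t)$, and only then is $\epsilon$ chosen to absorb the residual $\psi$-cross-term; done in this order the final coefficient of $f_{\epsilon,\bfn}(\widetilde\Lambda)$ stays at $3C_{\bfn}t^{-\eta}$ as required.
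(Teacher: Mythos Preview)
Your argument is correct and follows the same skeleton as the paper: expand $f_{\epsilon,\bfn}$, apply Propositions~\ref{prop: Contraction psi} and~\ref{prop: contraction varphi} termwise, then use the margin $\an>0$ coming from the strict convexity of $l\mapsto 1/\eta_l$ to absorb the cross-terms. The difference lies only in how that absorption is carried out. The paper does not introduce a second parameter $\delta$; instead it bounds each cross-term directly against the whole height function via
\[
(\tilde\varphi_{l-j}\tilde\varphi_{l+j})^{\eta_l/2}
\le (\epsilon f_{\epsilon,\bfn})^{\frac{\eta_l}{2}\left(\frac{1}{\eta_{l-j}}+\frac{1}{\eta_{l+j}}\right)}
\cdot(\epsilon^{2}f_{\epsilon,\bfn})^{1-\frac{\eta_l}{2}\left(\frac{1}{\eta_{l-j}}+\frac{1}{\eta_{l+j}}\right)}
\le \epsilon^{1+\an}f_{\epsilon,\bfn},
\]
using the built-in summands $\epsilon^{-1}\tilde\varphi_k^{\eta_k}$ and $\epsilon^{-2}$ of $f_{\epsilon,\bfn}$; then a single choice of small $\epsilon$ handles everything. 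Your route instead splits the cross-term by scaled AM--GM into pieces proportional to $\tilde\varphi_{l\pm j}^{\eta_{l\pm j}}$ plus a constant, leaving $\epsilon$ only for the residual $\xi(t)\tilde\varphi_1^{\eta_1}$. The paper's version is shorter and makes transparent why the $\epsilon^{-2}$ term sits in the definition of $f_{\epsilon,\bfn}$ (it supplies the factor $1\le\epsilon^2 f_{\epsilon,\bfn}$); your version shows that the conclusion does not actually rely on that term and would go through with $f_{\epsilon,\bfn}$ replaced by $\epsilon^{-1}\sum_l\tilde\varphi_l^{\eta_l}+\psi^{\eta_1}$ plus any nonnegative constant.
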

\begin{proof}
    Fix $t>1$. Let $\xi(t)$ be the maximum of the constants provided by Propositions \ref{prop: Contraction psi} and \ref{prop: contraction varphi}. Let $0<\e<1$ be a constant to be determined. Suppose $\Tilde{\Lambda} \in \widetilde{\X}$ be arbitrary. Then using Propositions \ref{prop: Contraction psi} and \ref{prop: contraction varphi}, we get that 
    \begin{align}
        &\int_{\Mat} f_{\e,\bfn}(g_tu(x) \widetilde{\Lambda}) \, d\mur(x) \nonumber \\
        &= \e^{-2} +  \e^{-1} \left(\int_{\Mat} \sum_{l=1}^{d-1}  \tilde{\varphi}_l^{\eta_l }(g_tu(x)\widetilde{\Lambda}) \, d\mur(x) \right) + \int_{\Mat} \psi^{\eta_1}(g_tu(x) \widetilde{\Lambda}) \, d\mur(x) \nonumber \\ 
        &\leq \e^{-2}  + C_{\bfn} t^{-\eta} \e^{-1} \left( \sum_{l=1}^{d-1} \tilde{\varphi}_l^{\eta_l }(\widetilde{\Lambda}) \right) + C_{\bfn} t^{-\eta} \psi^{\eta_1}(\widetilde{\Lambda})  \nonumber \\
        &+ \e^{-1} \xi(t) \left( \sum_{l=1}^{d-1}  \max_{1 \leq j \leq \min\{l, d-l\}} \left(  \tilde{\varphi}_{l-j}(\widetilde{\Lambda})\tilde{\varphi}_{l+j}(\widetilde{\Lambda})\right)^{\eta_l/2} \right) +  \xi(t) \tilde{\varphi}_1^{\eta_1}(\widetilde{\Lambda}). \label{eq: w 9}
    \end{align}
    Note that
    \begin{align}
        \label{eq: w 9 9 1}
        C_{\bfn} t^{-\eta} \e^{-1} \left( \sum_{l=1}^{d-1} \tilde{\varphi}_l^{\eta_l }(\widetilde{\Lambda}) \right) + C_{\bfn} t^{-\eta} \psi^{\eta_1}(\widetilde{\Lambda}) = C_{\bfn}t^{-\eta} \left( f_{\e, \bfn}(\widetilde{\Lambda}) -\e^{-2} \right),
    \end{align}
    and
    \begin{align}
        \label{eq: w 9 9}
        \tilde{\varphi}_1^{\eta_1}(\widetilde{\Lambda}) \leq \e f_{\e, \bfn}(\widetilde{\Lambda}).
    \end{align}
    Also, we have
    \begin{align}
        \tilde{\varphi}_{l-j}(\widetilde{\Lambda}) &\leq  (\e f_{\e, \bfn}(\widetilde{\Lambda})) ^{\frac{1}{\eta_{l-j}}}, \label{eq: 1 1 1} \\
        \tilde{\varphi}_{l+j}(\widetilde{\Lambda}) &\leq  (\e f_{\e, \bfn}(\widetilde{\Lambda}))^{\frac{1}{\eta_{l+j}}}, \label{eq: 1 1 2} \\
        1 &\leq \left(\e^{2} f_{\e, \bfn}(\widetilde{\Lambda})) \right)^{1-\frac{\eta_l}{2}\left(\frac{1}{\eta_{l-j}} + \frac{1}{\eta_{l+j} } \right)} \label{eq: 1 1 3}.
    \end{align}
    Thus
    \begin{align}
        \left(  \tilde{\varphi}_{l-j}(\widetilde{\Lambda}) \tilde{\varphi}_{l+j}(\widetilde{\Lambda})\right)^{\eta_l/2} &\leq \left( (\e f_{\e, \bfn}(\widetilde{\Lambda})) ^{\frac{1}{\eta_{l-j}}}  (\e f_{\e, \bfn}(\widetilde{\Lambda}))^{\frac{1}{\eta_{l+j}}} \right)^{\eta_l/2} \quad \text{ using \eqref{eq: 1 1 1}, \eqref{eq: 1 1 2}}  \nonumber \\
        &= (\e f_{\e, \bfn}(\widetilde{\Lambda}))^{\frac{\eta_l}{2}\left(\frac{1}{\eta_{l-j}} + \frac{1}{\eta_{l+j} } \right)} \nonumber\\
        &\leq (\e f_{\e, \bfn}(\widetilde{\Lambda}))^{\frac{\eta_l}{2}\left(\frac{1}{\eta_{l-j}} + \frac{1}{\eta_{l+j} } \right)} \left(\e^{2} f_{\e, \bfn}(\widetilde{\Lambda})) \right)^{1-\frac{\eta_l}{2}\left(\frac{1}{\eta_{l-j}} + \frac{1}{\eta_{l+j} } \right)} \nonumber     \quad \text{ using \eqref{eq: 1 1 3}}\\
        &\leq \e^{1+\an} f_{\e, \bfn}(\widetilde{\Lambda}). \label{eq: w 10}
    \end{align}
    Thus, we get from \eqref{eq: w 9}, \eqref{eq: w 9 9 1}, \eqref{eq: w 9 9} and \eqref{eq: w 10} that 
    \begin{align}
        \int_{\Mat} f_{\e,\bfn}(g_tu(x)\widetilde{\Lambda}) \, d\mur(x)  &\leq C_{\bfn} t^{-\eta} f_{\e,\bfn}(\widetilde{\Lambda}) + \e^{-2}(1- C_{\bfn}t^{-\eta})  \nonumber \\
        &+ (d-1) \e^{\an} \xi(t) f_{\e, \bfn}(\widetilde{\Lambda}) + \e \xi(t) f_{\e, \bfn}(\widetilde{\Lambda}). \label{eq: 1 1 1 1}
    \end{align}
    Choose $\e$ small enough so that $(d-1) \e^{\an} \xi(t) \leq {C_{\bfn} t^{-\eta}}$ and $\e \xi(t) \leq C_{\bfn} t^{-\eta} $. Also choose $b= \max\{0, \e^{-2}(1- C_{\bfn}t^{-\eta})\}$. Then for this choice of $\e, b$, we get \eqref{eq: height fn contraction} follows from \eqref{eq: 1 1 1 1}. This proves the proposition.

    \end{proof}


\section{Final Proof II}
\label{sec: Final Proof II}

\begin{prop}
    \label{prop:Genral Estimate}
    Let $\eta, \eta_1, \ldots, \eta_{d-1} \in \R$ be a sequence satisfying the following conditions:
    \begin{align*}
        0<\eta_i &\leq \zeta_i(\mu) \quad \text{for all } 1 \leq i \leq d-1,  \\
        \frac{1}{\eta_{i-j}} + \frac{1}{\eta_{i+j}} &\leq \frac{2}{\eta_i} \quad \text{for all } 1 \leq i \leq d-1, \, 1\leq j \leq \min\{i, d-i\},  \\
        \eta &= \min_{1 \leq l \leq d} w_l \eta_l,
    \end{align*}
    where $1/\eta_0 = 1/\eta_d := 0$. 

   Then, the following bounds hold for all $0<\gamma \leq (s(a_1+ b_1)- \eta)/\eta_1$, $0< p \leq 1$ and $x \in \widetilde{\X} \setminus \X$
    \begin{align*} 
        \dim_P( \Diver(x,\X,p) \cap \Kcal) &\leq s - \frac{p\eta}{a_1 + b_1}, \\ 
        \dim_P( \{ \theta \in \Kcal: \substack{ \text{ there exists $T_\theta >0$ such that for all $t>T_\theta$ ,} \\ \text{ we have $\psi( g_tu(\theta) x) \geq t^{\gamma} $} } \}) &\leq s - \frac{1}{a_1 + b_1} \left( \eta + \eta_1 \gamma \right),
    \end{align*}
    where $\Diver(\cdot)$ is defined as in Definition~\ref{def:div}.
\end{prop}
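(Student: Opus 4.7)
The plan is to apply Theorem~\ref{thm: contraction implies dimesnion bound} with $Y = \widetilde{\X}$, ambient divergent set $Y_f = \X \subset \widetilde{\X}$, and the family of height functions constructed in Section~\ref{sec: Height Function}. A preliminary reduction handles the weak inequalities in the hypothesis: if the $\eta_i$ only satisfy non-strict bounds, I would approximate by $\eta_i^{(k)} \nearrow \eta_i$ satisfying the strict inequalities assumed in Section~\ref{sec: Height Function}, establish the desired packing dimension bounds for each $\eta_i^{(k)}$, and conclude by monotonicity of the right-hand sides in $\eta$.

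Granting the strict inequalities, for each $\tau$ in an unbounded set $S \subset (1, \infty)$ let $\e(\tau), b(\tau)$ be the constants supplied by Proposition~\ref{thm: height function}, and set $f_\tau := f_{\e(\tau), \bfn}$. I verify the $(\{f_\tau\}_{\tau \in S}, \eta)$-contraction hypothesis (Definition~\ref{def: Contraction Hypothesis}) on $Y = \widetilde{\X}$. The first condition is clear: since $\psi(\widetilde{\Lambda}) = \infty$ precisely when $\widetilde{\Lambda} \in \X$ and the functions $\tilde{\varphi}_l$ are everywhere finite, $\{f_\tau = \infty\} = \X$, which is $G$-invariant and $\tau$-independent. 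The uniform log-Lipschitz property reduces to the corresponding property of each individual component $\tilde{\varphi}_l^{\eta_l}$ and $\psi^{\eta_1}$, whose log-Lipschitz constants depend only on the compact neighborhood of the identity; because these inequalities are multiplicatively homogeneous, the constant scalings $\e^{-1}$, $\e^{-2}$ in the definition of $f_{\e, \bfn}$ do not affect them, giving a constant uniform in $\tau$. Finally, Proposition~\ref{thm: height function} gives
\begin{equation*}
\int_{\Mat} f_\tau(g_\tau u(x)\widetilde{\Lambda}) \, d\mur(x) \leq 3 C_{\bfn} \tau^{-\eta} f_\tau(\widetilde{\Lambda}) + b(\tau),
\end{equation*}
so for $f_\tau(\widetilde{\Lambda}) > T(\tau) := b(\tau)\tau^{\eta}/C_{\bfn}$ the remainder is absorbed, yielding the contraction inequality with $c = 4 C_{\bfn}$ and exponent $\beta = \eta$.

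The first bound of the proposition is then the first conclusion of Theorem~\ref{thm: contraction implies dimesnion bound} applied directly, using $Y_f = \X$. For the second bound, the key observation is that $\psi^{\eta_1}(\widetilde{\Lambda}) \leq f_\tau(\widetilde{\Lambda})$ for every $\tau$, since $\psi^{\eta_1}$ appears as a summand in $f_{\e, \bfn}$. Consequently, if $\psi(g_t u(\theta) x) \geq t^\gamma$ for all sufficiently large $t$, then $f_\tau(g_t u(\theta) x) \geq t^{\eta_1 \gamma}$ for every $\tau \in S$ and all sufficiently large $t$. Applying the second conclusion of Theorem~\ref{thm: contraction implies dimesnion bound} with $a = \eta_1 \gamma$ (permissible because $a + \beta = \eta_1\gamma + \eta \leq (a_1 + b_1)s$ by the hypothesis on $\gamma$) then gives $\dim_P \leq s - (\eta + \eta_1\gamma)/(a_1 + b_1)$.

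The main technical delicacy is the uniform log-Lipschitz condition in the presence of a $\tau$-dependent parameter $\e(\tau)$; the multiplicative homogeneity of the log-Lipschitz inequalities for the individual $\tilde{\varphi}_l^{\eta_l}$ and $\psi^{\eta_1}$ is what secures the uniformity. A secondary bookkeeping issue is the limiting argument reducing weak to strict inequalities; beyond that, the remaining steps are absorbing the additive remainder in the contraction estimate and invoking Theorem~\ref{thm: contraction implies dimesnion bound} in its two forms.
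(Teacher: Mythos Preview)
Your proposal is correct and follows essentially the same route as the paper: verify the contraction hypothesis for the family $f_\tau = f_{\e(\tau),\bfn}$ using Proposition~\ref{thm: height function}, apply Theorem~\ref{thm: contraction implies dimesnion bound} (together with the inclusion $\psi^{\eta_1}\le f_\tau$ for the second bound), and reduce the weak-inequality case to the strict one by perturbation. The one point where the paper is more explicit is the perturbation step: rather than asserting the existence of approximants $\eta_i^{(k)}\nearrow\eta_i$ with strict inequalities, the paper sets $1/\eta_j^{(\delta)} := 1/\eta_j + \delta\, j(d-j)$ and checks directly that adding the strictly concave sequence $j\mapsto j(d-j)$ to $j\mapsto 1/\eta_j$ forces $2/\eta_i^{(\delta)} - 1/\eta_{i-j}^{(\delta)} - 1/\eta_{i+j}^{(\delta)} \ge 2j^2\delta > 0$. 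You should record some such construction, since an arbitrary choice of $\eta_i^{(k)}<\eta_i$ need not preserve the concavity condition on $1/\eta_i$; beyond this bookkeeping, your argument matches the paper's.
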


\begin{proof}
We divide the proof into two cases. \\
{\bf Case 1}
    In this case, we assume $\eta, \eta_1, \ldots, \eta_{d-1}$ satisfies following strict inequalities
        \begin{align*}
            \eta_i &< \zeta_i(\mu) \quad \text{ for all } 1 \leq i \leq d-1 \\
           \frac{1}{\eta_{i-j}} + \frac{1}{\eta_{i+j}} &< \frac{2}{\eta_i} \text{ for all } 1 \leq i \leq d-1, 1 \leq j \leq \min\{ i, d-i\}.
        \end{align*}
    In this case, using Proposition \ref{thm: height function}, for every $t>1$, choose $\e(t)$ and define the collection of height functions
$$
 \{f_t:= f_{\e(t), \bfn}: t >1\}.
$$
Now it is easy to see that the action of \(G\) on \(\widetilde{\X}\) satisfies the \(((f_t)_{t \geq 1}, \eta)\)-contraction hypothesis with respect to the measure \(\mu\). To see this, note that
\[
f_t = \e(t)^{-2} + \e(t)^{-1} \left( \sum_{l=1}^{d-1} \tilde{\varphi}_{l}^{\eta_l}(\widetilde{\Lambda}) \right) + \psi^{\eta_1}(\widetilde{\Lambda}),
\]
for all $t>1$. 

Since \(\tilde{\varphi}_l(\widetilde{\Lambda}) < \infty\) for all \(\widetilde{\Lambda} \in \widetilde{\X}\) and \(\psi(\widetilde{\Lambda}) = \infty\) if and only if \(\widetilde{\Lambda} \in \X\), it follows that for all \(t > 1\), the set \(\{ f_t = \infty \}\) equals \(\X\). This verifies the first property of Definition~\ref{def: Contraction Hypothesis}.  

For the second property, note that each of the functions \(\psi, \tilde{\varphi}_1, \ldots, \tilde{\varphi}_{d-1}\), and the constant function are log-Lipschitz with respect to the \(G\)-action. Hence, their linear combinations (in particular, the family \((f_t)_{t > 1}\)) are uniformly log-Lipschitz with respect to the \(G\)-action. This establishes the second property of Definition~\ref{def: Contraction Hypothesis}.  

The third property follows from Proposition~\ref{thm: height function}, taking \(c = 4C_{\bfn}\) and \(T = b t^{\eta}/C_{\bfn}\) corresponding to each \(t\) (note that the value of \(b\) also depends on \(t\)).

Thus, by Theorem \ref{thm: contraction implies dimesnion bound} and the fact that
$$
\{ \theta \in \Kcal: \substack{ \text{ there exists $T_\theta >0$ such that for all $t>T_\theta$ ,} \\ \text{ we have $\psi( g_tu(\theta) x) \geq t^{\gamma} $} } \} \subset \{ \theta \in \Kcal: \substack{\text{ there exists $T_\theta >0$ such that for all $t>T_\theta$ and $\tau >1$,} \\ \text{ we have $f_\tau( g_tu(\theta) x) \geq t^{\eta_1 \gamma} $ }} \},
$$
the proposition follows in this case.

{\bf Case 2} In this case, fix \(\eta,\ \eta_1,\dots,\eta_{d-1}\) which satisfy the hypotheses of the proposition but do not lie in Case~1.

To proceed, set \(q_i := i(d - i)\) for \(0 \le i \le d\), and for every \(\delta > 0\), define the sequences
\[
\eta_j^{(\delta)}:=\frac{1}{\frac{1}{\eta_j}+\delta q_j}\quad(1\le j\le d-1),
\qquad
\eta^{(\delta)}:=\min_{1\le l\le d-1} w_l\,\eta_l^{(\delta)}.
\]
Then \(\eta^{(\delta)},\eta_1^{(\delta)},\dots,\eta_{d-1}^{(\delta)}\) lie in Case~1. Indeed, \(0<\eta_1^{(\delta)}<\eta_1\le\zeta_i(\mu)\), and for every admissible \(i,j\),
\begin{align*}
\frac{2}{\eta_i^{(\delta)}}-\Big(\frac{1}{\eta_{i-j}^{(\delta)}}+\frac{1}{\eta_{i+j}^{(\delta)}}\Big)
&=2\Big(\frac{1}{\eta_i}+\delta q_i\Big)-\Big(\frac{1}{\eta_{i-j}}+\delta q_{i-j}+\frac{1}{\eta_{i+j}}+\delta q_{i+j}\Big)\\
&=\Big(\frac{2}{\eta_i}-\frac{1}{\eta_{i-j}}-\frac{1}{\eta_{i+j}}\Big)
+ \delta \Big(2q_i-(q_{i-j}+q_{i+j})\Big)\\
&\ge 0 + \delta \big(2i(d-i)-(i-j)(d-i+j)-(i+j)(d-i-j)\big)\\
&=2 \delta j^2>0,
\end{align*}
where we used that \((\eta_i)_i\) satisfies the conditions of the proposition. Hence Case~I applies and yields
\begin{align*}
\dim_P\big(\Diver(x,\X,p)\cap\Kcal\big)
&\le s-\frac{p\,\eta^{(\delta)}}{a_1+b_1},\\[4pt]
\dim_P\Big\{\theta\in\Kcal:\ \exists T_\theta>0\text{ s.t. }\forall t>T_\theta,\ \psi\big(g_tu(\theta)x\big)\ge t^\gamma\Big\}
&\le s-\frac{1}{a_1+b_1}\Big(\eta^{(\delta)}+\frac{\eta_1^{(\delta)}a_mb_n\omega}{a_m+b_n+a_m\omega}\Big).
\end{align*}
Finally, letting \(\delta\to0\) (so that \(\eta^{(\delta)}\to\eta\) and \(\eta_j^{(\delta)}\to\eta_j\)), we obtain the asserted bounds in this case as well.
\end{proof}

\begin{proof}[Proof of Theorem~\ref{main thm 1}]
Fix $\xi \in \R^m$. Using Remark \ref{rem: Enough for irrational}, we may assume that $\xi \notin \Z^m$. Then the element $x= [I_d, v(\xi)] \Z^d $ does not belong to $\X$ and we have
\begin{align}
    \label{eq: temp w 1} 
    \Div^\xi(a,b,p) &\subset \Diver(x, \X, p), \\
     \label{eq: temp w 2}
     \Sing^\xi(a,b,\omega) &\subset \bigcap_{\omega'< \omega} \{\theta \in \Mat: \text{ for all large $t$, we have } \psi(g_tu(\theta)x)\geq t^{\frac{ a_mb_n\omega'}{a_m+b_n+ a_m\omega'}} \},
\end{align}
where \eqref{eq: temp w 2} follows from Lemma \ref{lem: omega sing dynamical Interpretation}.

Also note that by Proposition \ref{Critical Exponent is positive}, we know that $\zeta_l(\mu) > 0$ for all $1 \leq l \leq d-1$. Therefore, we can construct a sequence $\eta_1, \ldots, \eta_{d-1}$ such that:
    \begin{align}
        0<\eta_i &\leq \zeta_i(\mu), \label{eq: 1 2 1} \\
        \frac{1}{\eta_{i-j}} + \frac{1}{\eta_{i+j}} &\leq \frac{2}{\eta_i} \quad \text{for all } 1 \leq i \leq d-1, \, j \leq \min\{i, d-i\}, \label{eq: 1 2 2}
    \end{align}
    where $1/\eta_0 = 1/\eta_d := 0$. For any such sequence, the results in \eqref{eq: main thm 1} and \eqref{eq: main thm 2} follow directly from \eqref{eq: temp w 1}, \eqref{eq: temp w 2} and Proposition \ref{prop:Genral Estimate}. This completes the proof of the first part of the theorem.

    For the second part, observe that Proposition~\ref{Critical Exponent is positive}, together with the inequalities
\[
\frac{m}{l} \leq d-l \quad \text{and} \quad \frac{n}{d-l} \leq l, \qquad (1 \leq l \leq d-1),
\]
ensures that the constants defined in \eqref{eq: main thm 3}, \eqref{eq: main thm 4}, and \eqref{eq: main thm 5} satisfy the condition in \eqref{eq: 1 2 1}. Moreover, the constants defined in \eqref{eq: main thm 3}, \eqref{eq: main thm 4}, and \eqref{eq: main thm 5} also satisfy the condition in \eqref{eq: 1 2 2}. Hence, the theorem follows.

\end{proof}

\begin{proof}[Proof of Corollary~\ref{cor intro 2}]
The corollary follows directly from Theorem~\ref{main thm 1} and \eqref{eq: inclusion}. 
In particular, we choose $\Kcal_{ij} = [0,1]$ for all $i,j$ and set
\[
\eta_l =
\begin{cases}
    \dfrac{m}{l}, & \text{if } l \leq m, \\
    \dfrac{n}{m+n-l}, & \text{if } l > m,
\end{cases}
\]
in equation~\eqref{eq: main thm 1} and~\eqref{eq: main thm 2}.  
For these choice of $\eta_l$, we have
\[
\min_{1 \leq l \leq d-1} \eta_l w_l = \min\{m a_m, n b_n\},
\]
and hence the result follows \eqref{eq: inclusion},~\eqref{eq: main thm 1} and~\eqref{eq: main thm 2}.
\end{proof}

\begin{proof}[Proof of Corollary~\ref{cor intro 4}]
The corollary again follows directly from Theorem~\ref{main thm 1} and \eqref{eq: inclusion}. 
In particular, we choose $\Kcal_{ij} = \mathcal{C}_3$ for all $i,j$ and set
\[
\eta_l =
\begin{cases}
    \dfrac{m}{l} \dfrac{\log 2}{\log 3}, & \text{if } l \leq m, \\
    \dfrac{n}{m+n-l} \dfrac{\log 2}{\log 3}, & \text{if } l > m,
\end{cases}
\]
in equations~\eqref{eq: main thm 1} and~\eqref{eq: main thm 2}. 
Since either $m=1$ or $n=1$, this choice is admissible by equations~\eqref{eq: main thm 4} and~\eqref{eq: main thm 5}. 

For these choice of $\eta_l$, we have
\[
\min_{1 \leq l \leq d-1} \eta_l w_l = \min\{m a_m, n b_n\} \dfrac{\log 2}{\log 3},
\]
and hence the result follows from \eqref{eq: inclusion}, \eqref{eq: main thm 1}, and \eqref{eq: main thm 2}.
\end{proof}

\bibliography{Biblio}
\end{document}